\newif\ifdraft\draftfalse
\newif\ifcite\citefalse
\newif\ifblow\blowtrue
\ifcite\usepackage{showkeys}\else\usepackage[notcite,notref]{showkeys}\fi\fi
\newtheorem{proposition}[equation]{Proposition}
\newtheorem{theorem}[equation]{Theorem}
\newtheorem{lemma}[equation]{Lemma}
\newtheorem{conjecture}[equation]{Conjecture}
\newtheorem{corollary}[equation]{Corollary}
\newtheorem{fact}[equation]{Fact}
\theoremstyle{remark}
\theoremstyle{definition}
\newtheorem{definition}[equation]{Definition}
\newtheorem{para}[equation]{}
\theoremstyle{remark}
\newtheorem{remark}[equation]{Remark}
\newtheorem{example}[equation]{Example}
\newtheorem{observation}[equation]{Observation}
\numberwithin{equation}{section}
\def\define{\def}
\define\J{{\mathcal{J}}}
\define\Ext{\operatorname{Ext}}
\define\Hom{\operatorname{Hom}}
\define\HOM{\underline{\Hom}}
\define\Db{\rm{D}^{b}}
\define\ad{\rm ad\,}
\define\beq{\begin{equation}}
\define\eeq{\end{equation}}
\def\bp{\mathbb P}
\def\bean{\begin{eqnarray}}
\def\eean{\end{eqnarray}}
\def\bea{\begin{eqnarray*}}
\def\eea{\end{eqnarray*}}
\def\x0{X_{s_0}}
\def\2p{\bp^1\times \bp^1}
\def\dim{{\rm dim}}
\def\ben{\begin{equation}}
\def\een{\end{equation}}
\newcommand{\lsup}[2]{%
        \ensuremath{{}^{#2}\!{#1}}}
\newcommand{\psup}[1]{\lsup{#1}{p}}
\newcommand{\zsup}[1]{\lsup{#1}{0}}
\def\CC{\mathbb C}
\def\PP{\mathbb P}
\def\QQ{\mathbb Q}
\def\ZZ{\mathbb Z}
\newcommand\calA{\mathcal A}
\newcommand\cc{\mathcal C}
\newcommand\dd{\mathcal D}
\newcommand\ee{\mathcal E}
\newcommand\calF{\mathcal F}
\newcommand\hh{\mathcal H}
\newcommand\calH{\mathcal H}
\newcommand\jj{\mathcal J}
\newcommand\kk{\mathcal K}
\newcommand\calL{\mathcal L}
\newcommand\oo{\mathcal O}
\newcommand\calO{\mathcal O}
\newcommand\vv{\mathcal V}
\newcommand\ww{\mathcal W}
\newcommand\calV{\mathcal V}
\newcommand\calW{\mathcal W}
\newcommand\xx{\mathcal X}
\newcommand\calX{\mathcal X}
 \newcommand\rH{\mathrm{H}}
 \newcommand\rP{\mathrm{P}}
\newcommand\ds{\displaystyle}
\newcommand\prim{\mathop{\mathrm{prim}}\nolimits}
\newcommand\pr{\mathop{\mathrm{pr}}\nolimits}
\newcommand\codim{\mathop{\mathrm{codim}}\nolimits}
\newcommand\di{d}
\newcommand\sm{\mathop{sm}}
\newcommand\Hodge{\mathop{\mathrm{Hodge}}\nolimits}
\newcommand\Alg{\mathop{\mathrm{Alg}}\nolimits}
\newcommand\colim{\mathop{\mathrm{colim}}}
\newcommand\Spec{\mathop{\mathrm{Spec}}\nolimits}
\newcommand\im{\mathop{\mathrm{im}}\nolimits}
\newcommand\cl{\mathop{\mathrm{cl}}\nolimits}
\newcommand\MHM{\mathop{\mathrm{MHM}}\nolimits}
\newcommand\MHS{\mathop{\mathrm{MHS}}\nolimits}
\newcommand\VMHS{\mathop{\mathrm{VMHS}}\nolimits}
\newcommand\supp{\mathop{\mathrm{supp}}\nolimits}
\newcommand\NF{\mathop{\mathrm{NF}}\nolimits}
\newcommand\NFa[1]{\NF(#1)^{\ad}}
\newcommand\rat{\mathop{\mathrm{rat}}\nolimits}
\newcommand\Perv{\mathop{\mathrm{Perv}}}
\newcommand\Gr{\mathop{\mathrm{Gr}}\nolimits}
\newcommand\an{\mathop{\mathrm{an}}\nolimits}
\newcommand\IH{\mathop{\mathrm{IH}}\nolimits}
\newcommand\Shv{\mathop{\mathrm{Shv}}\nolimits}
\newcommand\adm{\mathop{\mathrm{ad}}\nolimits}
\newcommand\AVMHS[1]{\VMHS{(#1)}^{\adm}}
\newcommand\upp{{}^p}
\newcommand\Hab{\rH_{\calA}} 
\newcommand\IHab{\IH_{\calA}} 
\newcommand\IHa[1]{\IH_{\calA,\, #1}} 
\begin{document}
\title[Normal functions]{Singularities of admissible normal functions}
\author{Patrick Brosnan}
\address{Department of Mathematics\\
The University of British Columbia\\
1984 Mathematics Road}
\email{brosnan@math.ubc.ca}
\author{Hao Fang}
\address{Department of Mathematics\\
College of Liberal Arts \& Sciences\\
University of Iowa\\
14 MLH\\
Iowa City, IA 52242, USA}
\email{haofang@math.uiowa.edu}
\author{Zhaohu Nie}
\address{Department of Mathematics\\
Mailstop 3368\\
Texas A\& M University
College Station, TX 77843-3368, USA}
\email{nie@math.tamu.edu}
\author{Gregory Pearlstein}
\address{Department of Mathematics\\
Michigan State University\\
East Lansing, MI 48824, USA}
\email{gpearl@math.msu.edu}
\date{\today}
\keywords{Hodge Conjecture, Normal Functions, Mixed Hodge Modules}

\begin{abstract}
In a recent paper, M. Green and P. Griffiths  used R. Thomas' work on
nodal hypersurfaces to sketch a proof of the equivalence of the Hodge
conjecture and the existence of certain singular admissible normal
functions.  Inspired by their work, we study
normal functions using M. Saito's mixed Hodge modules and prove that the
existence of singularities of the type considered by Griffiths and
Green is equivalent to the Hodge conjecture.
Several of the intermediate results, including a relative version of
the weak Lefschetz theorem for perverse sheaves, are of  independent
interest.
\end{abstract}

\maketitle

\section{Introduction}

Let $S$ be a complex manifold.  A variation of pure Hodge structure
$\hh$ of weight $-1$ on $S$ induces a family of compact complex tori
$\pi:J(\hh)\to S$.  Let $\cc_S$ denote the sheaf of continuous
functions on $S$, $\oo_S^{\an}$ the sheaf of holomorphic
functions on $S$, and $\jj(\hh)$ the sheaf of continuous sections of $\pi$. 
The exact sequence 
$$
0\to \hh_{\ZZ}\to \hh_{\CC}\otimes\cc_S/F^0\hh\otimes_{\oo_S^{\an}}\cc_S\to \jj(\hh) \to 0
$$
of sheaves of abelian groups on $S$ induces a long exact sequence in
cohomology.  Writing $\cl_{\ZZ}:H^0(S,\jj(\hh))\to H^1(S,\hh_{\ZZ})$ for
the first connecting homomorphism, we find that, to each continuous
section $\nu$ of $\pi$, we can associate a cohomology class
$\cl_{\ZZ}(\nu)\in \rH^1(S,\hh_{\ZZ})$.  

Assume now that $j:S\to\overline{S}$ is an embedding of $S$ as a
Zariski open subset of a complex manifold
$\overline{S}$~\cite{SaitoANF}*{Definition 1.4}.  If $U$ is an (analytic) open
neighborhood of a point $s\in\overline{S}(\CC)$, we can restrict
$\cl_{\ZZ}(\nu)$ to $U\cap S$ to obtain a class in $\rH^1(U\cap
S,\hh_{\ZZ})$.  Taking the limit over all open neighborhoods $U$ of
$s$, we obtain a class
\begin{equation}
\label{e.sing}
\sigma_{\ZZ,s}(\nu)\in \colim_{s\in U} \rH^1(U\cap S, \hh_{\ZZ}).
\end{equation}
We call this class the \emph{singularity} of $\nu$ at $s$, and we say
that \emph{$\nu$ is singular on $\overline{S}$} if there exists a point
$s\in\overline{S}$ with a non-torsion singularity $\sigma_{\ZZ,s}(\nu)$. 

In this paper, we will study $\sigma_{\ZZ,s}(\nu)$ when $\nu$ is a
\emph{normal function}; that is, a horizontal holomorphic section of
$\pi$.  In fact, we will restrict our attention to \emph{admissible
  normal functions} which are normal functions satisfying a very
restrictive (but, from the point of view of algebraic geometry, very
natural) constraint on their local monodromy.  These normal functions
were systematically studied by Saito in~\cite{SaitoANF}.


Now suppose $X$ is a projective complex variety of dimension $2n$ with
$n$ an integer.  Let $\calL$ be a very ample invertible sheaf on $X$,
and let $\zeta\in \Hodge^n_{\ZZ}(X):=\rH^{n,n}(X)\cap \rH^{2n}(X,\ZZ(n))$ be a
primitive Hodge class; that is, assume that $c_1(\calL)\cup \zeta=0$.
Recall that the map $p:\rH^{2n}_{\dd}(X,\ZZ(n))\to\Hodge^n_{\ZZ}(X)$ from Deligne
cohomology to Hodge classes is surjective.  To any $\omega\in
\rH^{2n}_{\dd}(X,\ZZ(n))$ such that $p(\omega)=\zeta$, one can associate
a normal function $\nu(\omega,\calL)$ on the complement $|\calL|-X^{\vee}$
of the dual variety $X^{\vee}$ in the complete linear system
$|\calL|$. 
This function takes a point $f\in |\calL|-X^{\vee}$ to the restriction of
$\omega$ to $\rH^{2n}_{\dd}(V(f),\ZZ(n))$ where $V(f)$ denotes the zero locus of
$f$.  Since $\zeta$ is primitive, $\omega|_{V(f)}$ lands in $J(H^{2n-1}
(V(f))(n))$, a subgroup of $\rH^{2n}_{\dd}(V(f),\ZZ(n))$.   
Moreover, if $\omega'$ is another Deligne cohomology class such that
$p(\omega')=\zeta$, then $\nu(\omega,\calL)$ is singular on $|\calL|$ if
and only if $\nu(\omega',\calL)$ is singular~(see \S\ref{s.BBDG}).  We say that \emph{$\zeta$
  is singular on $|\calL|$} if $\nu(\omega,|\calL|)$ is singular on
$|\calL|$ for some $\omega\in \rH^{2n}_{\dd}(X,\ZZ(n))$ such that
$p(\omega)=\zeta$.

\begin{conjecture}
  \label{c.Main} Let $X$ and $\calL$ be as above.  For every non-torsion
  primitive Hodge class $\zeta$, there is an integer $k$ such that
  $\zeta$ is singular on $|\calL^k|$.
\end{conjecture}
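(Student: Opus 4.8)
The plan is to \emph{construct}, for a suitable integer $k$, a point $s\in|\calL^k|$ at which the admissible normal function attached to $\zeta$ has a non-torsion singularity, following the geometric ideas of Green, Griffiths, and R. Thomas but carried out inside Saito's theory of admissible normal functions and mixed Hodge modules, so that the class $\sigma_{\ZZ,s}(\nu)$ can be matched with concrete intersection data and tested for torsion. Fix a Deligne class $\omega$ with $p(\omega)=\zeta$, which exists by surjectivity of $p$, and write $\nu=\nu(\omega,\calL^k)$; as recalled above, whether $\zeta$ is singular on $|\calL^k|$ does not depend on this choice of $\omega$. \textbf{Step 1 (set-up).} For $k\gg0$ let $\rho:\xx\to|\calL^k|$, with $\xx\subset X\times|\calL^k|$, be the universal hypersurface, smooth over the complement of the discriminant $X^\vee\subset|\calL^k|$; there $R^{2n-1}\rho_*\QQ(n)$ underlies the weight $-1$ variation $\hh$ whose Jacobian bundle carries $\nu$ (since $c_1(\calL^k)\cup\zeta=0$, the primitivity remark of the introduction places $\omega|_{V(f)}$ in $J(\rH^{2n-1}(V(f))(n))$ for smooth $V(f)$). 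For $s\in X^\vee$ one identifies $\sigma_{\ZZ,s}(\nu)$, through the canonical fibre functors of the mixed Hodge module formalism, with the obstruction to extending $\nu$ across $s$, living in the cokernel of the monodromy logarithm $N$ on the local limit mixed Hodge structure; here the relative weak Lefschetz theorem for perverse sheaves proved in this paper is the key tool, since it pins the primitive part of $R^{2n-1}\rho_*\QQ$ to an intersection complex $\IC$ on $|\calL^k|$ and thereby confines every possible singularity of $\nu$ to intersection-cohomology data supported on the strata of $X^\vee$.

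\textbf{Step 2 (the test point).} Let $Z$ be an algebraic cycle of dimension $n$ on $X$ with $[Z]=\zeta$ in $\Hodge^n_{\ZZ}(X)$. By R. Thomas's theorem on nodal hypersurfaces, for $k\gg0$ there is $f\in|\calL^k|$ whose zero locus $V(f)$ has only finitely many ordinary double points, located at general points $p_1,\dots,p_N$ of $Z$; after a further Veronese re-embedding if necessary one may take $N$ large and the $p_i$ in sufficiently general position on $Z$ that the Picard--Lefschetz vanishing classes $\delta_1,\dots,\delta_N\in\rH^{2n-1}$ of the nearby smooth fibre are $\QQ$-independent and collectively detect $Z$. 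Put $s=f\in X^\vee$. \textbf{Step 3 (local computation).} At $s$ the limit mixed Hodge structure of $\hh$ is built from this Picard--Lefschetz data: $N$ is a sum of the rank-one operators $x\mapsto\pm\langle x,\delta_i\rangle\,\delta_i$, and under the identification of Step 1 the class $\sigma_{\ZZ,s}(\nu)$ becomes the tuple of integers got by pairing the transported class of $\zeta$ --- its image under $\rH^{2n}(X)\to\rH^{2n}(V(f))\to\rH^{2n}_{\{p_i\}}(V(f))$ --- against the $\delta_i$. Since each $p_i$ lies on $Z$ and $\zeta=[Z]$, the $i$-th pairing is the local intersection number of $Z$ with a vanishing thimble at $p_i$, a nonzero integer; as these are integers and the $\delta_i$ are independent, $\sigma_{\ZZ,s}(\nu)$ has infinite order in $\colim_{s\in U}\rH^1(U\cap S,\hh_{\ZZ})$. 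Hence $\zeta$ is singular on $|\calL^k|$, which is the integer sought.

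The construction above is formal \emph{except} for its input to Step 2 --- an honest \emph{algebraic} representative $Z$ of $\zeta$, onto which the nodes are placed --- and this is exactly where the genuine difficulty of the conjecture is concentrated and the step I expect to be the main obstacle. To attack it directly one would replace ``produce $Z$'' by ``show that a non-torsion primitive $\zeta$ cannot fail to be singular on \emph{every} $|\calL^k|$'': run the limit mixed Hodge structure analysis of Steps 1 and 3 over all $|\calL^k|$ simultaneously, use a Bloch--Srinivas-type decomposition of the diagonal together with Nori connectivity to push a hypothetical non-singular yet non-torsion class onto the cohomology of a variety of strictly smaller dimension, and induct on $\dim X$. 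The hard part --- and it is genuinely hard, a complete argument here being of the same order as the Hodge conjecture itself --- is controlling the coniveau of the limit mixed Hodge structure along the deepest strata of $X^\vee$, where $V(f)$ degenerates most severely; a realistic intermediate goal is to run this induction under a coniveau hypothesis on $\zeta$, or in the case where $\zeta$ comes by a correspondence from a surface or an abelian variety, where the Lefschetz $(1,1)$ theorem supplies the cycle $Z$ and Steps 1--3 then go through unconditionally.
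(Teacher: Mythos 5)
You have set out to prove Conjecture~\ref{c.Main} itself, but this statement is an open conjecture: the paper does not prove it, and its main result, Theorem~\ref{t.Main}, is precisely that Conjecture~\ref{c.Main} is \emph{equivalent} to the Hodge conjecture. To your credit, your closing paragraph correctly isolates the irreducible input --- an honest algebraic representative $Z$ of $\zeta$ --- and acknowledges that supplying it is of the same order of difficulty as the Hodge conjecture. So what you have actually sketched is (at best) the implication ``Hodge conjecture $\Rightarrow$ Conjecture~\ref{c.Main}'', i.e.\ the direction proved in Theorem~\ref{t.If}; the unconditional statement remains unproved, and your Steps 1--3 cannot be rescued without that input.

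Even read as a proof of the conditional direction, your route differs from the paper's and has a soft spot. You follow Green--Griffiths--Thomas: choose a nodal hypersurface with nodes on $Z$ and compute $\sigma_{\ZZ,s}(\nu)$ by Picard--Lefschetz at the nodes. The paper deliberately avoids Thomas's nodal theorem here: granting the Hodge conjecture, Lemma~\ref{l.hypersurface} produces a (possibly very singular) member $\xx_p\in|\calL^m|$ containing a component of $Z$ with $\zeta_{|\xx_p}\neq 0$, and Corollary~\ref{c.VanNonZero} --- whose proof rests on the decomposition theorem and the perverse weak Lefschetz Theorem~\ref{t.wl}, together with the non-triviality of vanishing cycles for $\calL\gg0$ (Proposition~\ref{p.VanNonZero}) --- identifies $\sigma_p(\nu)$ with $\zeta_{|\xx_p}$ outright, with no constraint on the singularities of $\xx_p$. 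This is both cleaner and more robust than your Step 3, where the assertion that each local pairing of $Z$ with a vanishing thimble at $p_i$ is a \emph{nonzero} integer does not follow merely from $p_i\in Z$; the correct statement, and the one Thomas's theorem actually delivers, is that the total restriction $\zeta_{|V(f)}$ is nonzero, and one must then still argue that this nonvanishing is detected by the $\IH^1_p$ summand rather than by $\IH^0_p(\hh_{2n}(n))$ --- which is exactly what primitivity of $\zeta$ and Corollary~\ref{c.H2n} accomplish in the paper. I would encourage you to recast your argument as a proof of Theorem~\ref{t.If} along the paper's lines, and to treat the nodal-hypersurface picture as the content of \S\ref{s.Singularities} (the comparison with Conjecture~\ref{c.GG}) rather than as the engine of the equivalence.
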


In this paper, we prove the following result motivated by the work of
Green and Griffiths~\cite{GG}.

\begin{theorem}
\label{t.Main}
Conjecture~\ref{c.Main} holds (for every even dimensional $X$ and
every non-torsion primitive middle dimensional Hodge class $\zeta$) if and only if the Hodge
conjecture holds (for all smooth projective algebraic varieties).
\end{theorem}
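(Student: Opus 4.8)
The plan is to translate the condition ``$\zeta$ is singular on $|\calL^{k}|$'' into an intrinsic statement about $\zeta$ and $X$, to show by the relative weak Lefschetz theorem for perverse sheaves that for $k\gg0$ this statement is equivalent to the algebraicity of $\zeta$, and then to derive both implications of the theorem from that equivalence. First I would set up the universal family of hyperplane sections $\pi\colon\xx\to S:=|\calL^{k}|-X^{\vee}$, with open immersion $j\colon S\hookrightarrow\bar S:=|\calL^{k}|$ and with $q\colon\xx\to X$ the natural projection, which is a projective bundle. Let $\hh$ be the variation of Hodge structure of weight $-1$ on $S$ whose fibre over $f$ is the vanishing cohomology $\hh_{f}\subset\rH^{2n-1}(V(f),\QQ)(n)$, i.e.\ the orthogonal complement of the image of $\rH^{2n-1}(X,\QQ)(n)$, so that $\rH^{2n-1}(V(f))=\rH^{2n-1}(X)\oplus\hh_{f}$ by the weak Lefschetz theorem and semisimplicity of polarizable Hodge structures; since $\zeta$ is primitive, it is this variation whose family of intermediate Jacobians carries $\nu(\omega,\calL^{k})$. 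Using the Leray spectral sequence for $\pi$, the K\"unneth decomposition coming from $q$, and primitivity of $\zeta$ once more, I would identify the cohomology class $\cl_{\QQ}(\nu(\omega,\calL^{k}))\in\rH^{1}(S,\hh_{\QQ})$ with the image of $\zeta\in\rH^{2n}(X,\QQ(n))$ under a natural map that is injective for $k\gg0$. In the same spirit, comparing the definition of the singularity in \eqref{e.sing} with the edge sequence $0\to\rH^{1}(\bar S,j_{*}\hh_{\QQ})\to\rH^{1}(S,\hh_{\QQ})\to\rH^{0}(\bar S,\rR^{1}j_{*}\hh_{\QQ})$, I would show that $\nu(\omega,\calL^{k})$ is singular on $|\calL^{k}|$ precisely when $\cl_{\QQ}(\nu)$ is \emph{not} in the image of $\rH^{1}(\bar S,j_{*}\hh_{\QQ})$; so ``singular'' translates into ``the class of the normal function does not extend over $\bar S$''.

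Next I would bring in the relative weak Lefschetz theorem for perverse sheaves, applied to the family of hyperplane sections over $\bar S$ inside $X\times\bar S$. It controls the perverse cohomology sheaves of $\rR\pi_{*}\QQ$ on $\bar S$ and, for $k\gg0$, lets one compare the cohomology of $\bar S$ with coefficients in $j_{*}\hh$ and in the intermediate extension $j_{!*}\hh$ with the intersection cohomology of $X$; the difference between the two extensions is supported on $X^{\vee}$ and is exactly what records the singularities. From this comparison I would extract the key equivalence: \emph{for $k$ sufficiently large, $\zeta$ is singular on $|\calL^{k}|$ if and only if $\zeta$ is algebraic}, meaning a $\QQ$-linear combination of classes of algebraic subvarieties of $X$. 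The same comparison is what one uses to see that ``$\zeta$ is algebraic'' is independent of $k$, so that this ``$k\gg0$'' statement indeed matches the existential quantifier ``there is an integer $k$'' in Conjecture~\ref{c.Main}.

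The equivalence splits into two implications. For ``algebraic $\Rightarrow$ singular'' I would follow R. Thomas and Green--Griffiths: given an algebraic cycle $Z$ on $X$ with $[Z]$ a positive multiple of $\zeta$, for $k\gg0$ one constructs a member $V(f)\in|\calL^{k}|$ with a configuration of nodes so placed that the associated vanishing cycles pair non-trivially with $Z$, and a local analysis of $\nu(\omega,\calL^{k})$ near $f$ shows the singularity there is non-torsion; this uses only that $[Z]$ is homologically non-trivial, and it is here that Thomas' existence results for nodal hypersurfaces of large degree enter. For the converse I would use Saito's theory of admissible normal functions together with the vanishing-cycle functor of mixed Hodge modules: a non-torsion singularity of $\nu(\omega,\calL^{k})$ at $s$ produces a non-zero morphism of mixed Hodge structures $\QQ\to\Gr^{W}\phi(\,\cdot\,)$ attached to the degeneration of hyperplane sections at $s$, that is, a non-trivial Hodge class in vanishing cohomology; using the control over the singular members when $k\gg0$ (and, on the deeper strata of $X^{\vee}$, weight and polarization arguments) this class is shown to be algebraic, and running the identification of the first step in reverse shows that $\zeta$ itself is algebraic.

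Finally I would assemble the theorem. If the Hodge conjecture holds, then every non-torsion primitive Hodge class $\zeta$ on every even-dimensional smooth projective $X$ is algebraic, hence singular on $|\calL^{k}|$ for $k\gg0$ by the equivalence, which is Conjecture~\ref{c.Main}. Conversely, assuming Conjecture~\ref{c.Main}, I would first carry out the standard reduction of the general Hodge conjecture to the case of a non-torsion primitive middle-dimensional Hodge class on an even-dimensional smooth projective variety---replacing the given variety by a product with projective spaces and using the hard Lefschetz and K\"unneth decompositions to move an arbitrary Hodge class into the middle degree of an even-dimensional variety---and then Conjecture~\ref{c.Main} furnishes, for such a class, a $k$ with $\zeta$ singular on $|\calL^{k}|$, whence $\zeta$ is algebraic; this gives the (rational, hence the usual) Hodge conjecture. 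The step I expect to be the genuine obstacle is the implication ``singular $\Rightarrow$ algebraic'': extracting an honest algebraic cycle from a non-torsion singularity forces one to work systematically with Saito's mixed Hodge modules---admissibility, the vanishing-cycle functor, the weight filtration on the limit mixed Hodge structure, polarizations---and to rely on the relative weak Lefschetz theorem to replace the unmanageably large base $|\calL^{k}|$ by intersection-cohomology statements on $X$; inside this argument, the delicate point is to show that the non-nodal strata of the dual variety contribute nothing once $k$ is large.
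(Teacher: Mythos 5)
Your overall architecture agrees with the paper's --- set up the universal family over $|\calL^k|$, invoke the perverse weak Lefschetz theorem (Theorem~\ref{t.wl}) to control the summands $E_{ij}$, and reduce the Hodge conjecture to the middle-degree primitive case by Lemma~\ref{l.EquivHodge} --- but you handle both implications in a genuinely different, and heavier, way than the paper does. The paper's central output from the decomposition-theorem analysis is the clean identity $\sigma_p(\nu)=\zeta|_{\xx_p}$ of Corollary~\ref{c.VanNonZero}, valid for \emph{every} $p\in P$ once the vanishing cycles are nontrivial, which happens for $\calL\gg 0$ by Proposition~\ref{p.VanNonZero}. With this in hand, ``algebraic $\Rightarrow$ singular'' (Theorem~\ref{t.If}) is essentially immediate: if the Hodge conjecture holds, Poincar\'e duality plus Lemma~\ref{l.hypersurface} produce an algebraic cycle $Z$ with $\zeta\cup[Z]\neq 0$, Lemma~\ref{l.Hilb} places $Z$ inside some $D\in|\calL^m|$ with $m\gg 0$, and then $\sigma_{[D]}(\nu)=\zeta|_D\neq 0$. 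You instead propose to follow Thomas and Green--Griffiths, constructing a specially placed nodal hypersurface and doing a local Picard--Lefschetz analysis near it. That route also works (it is the one in \cite{GG}), but it is precisely the dependence on Thomas' nodal-hypersurface theorem that the present paper announces, in its introduction, as the thing it set out to avoid; Corollary~\ref{c.VanNonZero} replaces it entirely.

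Your treatment of ``singular $\Rightarrow$ algebraic'' also diverges, and here I would flag a soft spot. The paper's proof of Theorem~\ref{t.OnlyIf} is elementary once $\sigma_p(\nu)\neq 0\Rightarrow\zeta|_{\xx_p}\neq 0$ is established: resolve $\xx_p$, observe by Deligne's mixed Hodge theory that $\zeta|_{\xx_p}$ survives to the top weight graded quotient of $\rH^{2n}(\xx_p)$ and hence pulls back nontrivially to the resolution, and apply the inductive Hodge conjecture in dimension $2n-2$ (Totaro's remark). Your proposal instead reaches for the vanishing-cycle functor of mixed Hodge modules and a stratified analysis of $X^{\vee}$, including a separate argument that ``the non-nodal strata of the dual variety contribute nothing once $k$ is large.'' None of that machinery is needed --- the decomposition theorem plus the identity above already reduce the question to a nonzero restriction of $\zeta$ to a single singular fibre, and resolution plus induction finish it --- and the stratification argument you are planning is both vague as stated and, in the paper's approach, a nonissue. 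Relatedly, the ``key equivalence'' you want to extract (``for $k\gg0$, $\zeta$ singular iff $\zeta$ algebraic'') is not a pointwise equivalence for a fixed $X$ and $\zeta$: the forward direction already uses the Hodge conjecture in dimension $2n-2$. What the paper actually proves are the two conditional implications of Theorems~\ref{t.If} and~\ref{t.OnlyIf}, which feed the biconditional only at the level of the universally quantified statements.
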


In the paper of Green and Griffiths~\cite{GG}, an analogous result is
stated.  The arguments of Green and Griffiths rely on R.~Thomas's
paper~\cite{thomas} which shows that the Hodge conjecture is
equivalent to the statement that every non-torsion Hodge class $\zeta$
in an even dimension smooth projective complex variety $X$ has
non-zero restriction to some divisor $D$ in $X$ which is smooth
outside of finitely many nodes.  Our proof of Theorem~\ref{t.Main}
does not use Thomas' result concerning nodal hypersurfaces.  It relies
instead on the theory of admissible normal functions and the ``Gabber
decomposition theorem" in Morihiko Saito's theory of mixed Hodge
modules~\cite{SaitoIntro}.  More importantly, the argument of Green
and Griffiths relies on Hironaka's resolution of singularities to
modify $|\calL^k|$ so that $X^{\vee}$ becomes a normal crossing
divisor.  This makes the argument of Green and Griffiths somewhat less
explicit than one would hope.

We have two intermediate results which may be particularly interesting
in their own right.  The first is Lemma~\ref{IntExt} which gives a
criterion for the intermediate extension functor $j_{!*}$
of~\cite{BBD} to preserve the exactness of a sequence of mixed Hodge
modules.  The second is Theorem~\ref{t.wl} which we call the
``perverse weak Lefschetz.''  It is a relative weak Lefschetz for
families of hypersurfaces.

The organization of this paper is as follows.  In \S\ref{s.ANFIC}, we
study the general properties of admissible normal functions and their
singularities.  In particular, we show that the singularity is always
a Tate class which lies in the local intersection cohomology, a
subgroup of the local cohomology.  In \S\ref{AHC}, we generalize
Saito's definition of absolute Hodge cohomology slightly.  In
\S\ref{s.BBDG}, we introduce some notation concerning the
decomposition theorem of Beilinson-Bernstein-Deligne-Gabber and Saito.
In \S\ref{s.Vanishing}, we prove the perverse weak Lefschetz theorem
alluded to above and use it to compute the singularity of a normal
function associated to a primitive Hodge class (as in
Conjecture~\ref{c.Main}) in terms of restriction of the Hodge class to
a hyperplane. In \S\ref{HC}, we prove Theorem~\ref{t.Main}.

The last section, \S\ref{s.Singularities}, links our work directly to
that of Green and Griffiths~\cite{GG}.  Doing this involves showing
that singularities of admissible normal functions do not disappear
after modification of the base.  Unfortunately, we have been unable to
prove that this is the case for all admissible normal functions.
However, by the work of Thomas' work alluded to above, we have been
able to show that this is the case for the types of singularities
occurring in~\cite{GG}.  This answers a question of Green and
Griffiths (see note at bottom of~\cite{GG}[p.~225]).

\subsection*{Notation} A complex variety will mean an integral
separated scheme $X$ of finite type over $\CC$.  Following Saito, we
write $\di_X$ for $\dim\, X$ to shorten some of the expressions.  If
$\ee$ is a locally free sheaf on $X$ and $s\in\Gamma(X,\ee)$, we write
$V(s)$ for the zero locus of $s$~\cite{Hartshorne}.

By a perverse sheaf we mean a perverse sheaf for the middle
perversity.  If $f:X\to Y$ is a morphism between complex varieties, we
write $f_*,f_!,f^*, f^{!}$ for the derived functors between the
bounded derived categories of constructable sheaves following the
convention of~\cite{BBD}*{1.4.2.3}.  However, we deviate sligtly from
this convention is~\S\ref{s.Singularities} where we write $f_*\calF$
(instead of $\zsup H f_*\calF$) for the usual push-forward of a
constructible sheaf $\calF$.  

We write $\MHS$ for Deligne's category of mixed Hodge structures.
When necessary for clarity, we write $\MHS_{R}$ for the category of
mixed Hodge structures with coefficients in a ring $R$.  Similarly, we
write $\VMHS(S)$ or $\VMHS_{R}(S)$ for the category of variations of
mixed Hodge structures with $R$ coefficients over a separated analytic
space $S$.

\begin{remark}
\label{r.etale}
The reader might guess that analogues of the results in
this paper can be obtained in characteristic $p$ by replacing mixed
Hodge modules by mixed perverse sheaves.  Indeed this is the case.  To
the best of our knowledge, in proving our key intermediate results we
have used no fact about mixed Hodge modules that is not the direct
analogue of a corresponding fact about mixed perverse sheaves.
\end{remark}

\subsection*{Acknowledgments}
The authors would like to thank Phillip Griffiths who generously
shared his ideas on singularities of normal functions with the authors
during their stay at the Institute for Advanced Study in 2004--2005.
The authors would also like to thank Pierre Deligne and Mark Goresky
for very helpful discussions on intersection cohomology and mixed
Hodge modules as well as Herb Clemens, Najmuddin Fakhruddin, Mark Green and
Richard Hain for several other useful conversations.  In particular, we would
like to thank Fakhruddin for pointing out Remark~\ref{r.naf}.

\section{Admissible normal functions and Intersection Cohomology}
\label{s.ANFIC}

Let $j:S\to\overline{S}$ be an open immersion of smooth complex
manifolds.  If $E$
is a local system of $\QQ$-vector spaces on $S$ and $s\in
\overline{S}$ is a closed point, we set
\begin{equation*}
  \rH^i_s(E):=\colim_{s\in U} \rH^i(S\cap U, E)
\end{equation*}
where the colimit is taken over all open neighborhoods $U$ of $s$.  
If $i:\{s\}\to\overline{S}$ denotes the inclusion morphism, then
$\rH^i_s(E)=\rH^i(\{s\}, i^*Rj_*E)$.  (We ask the reader to
distinguish between the integer $i$ and the morphism $i$ based on the
context.) 

\begin{para}
\label{p.OpenImmersion}
Now assume that $S$ and $\overline{S}$ are both equidimensional of
dimension $d$ and that $j$ is an open immersion.   The local system $E$
defines a perverse sheaf $E[d]$ on $S$ (since $S$ is smooth).
Moreover, by intermediate extension, it defines a perverse sheaf
$j_{!*}\QQ[d]$ on $\overline{S}$.  Adopting the standard notation, we
set
\begin{align*}
  \IH^i(\overline{S}, E) &=\rH^{i-d}(\overline{S}, j_{!*}E[d])\\
  \IH^i_s(E)             &=\rH^{i-d}(\{s\}, i^*j_{!*}E[d]).
\end{align*}
\end{para}

Note that, $j_{!*}E[d]$ maps to $j_*E[d]$:  it is defined as a
subobject of $\upp j_*E[d]:={}^pH^0(j_*E[d])$ in the category of perverse sheaves and
$\upp j_*$ 
is left $t$-exact.  Therefore we
have natural maps
\begin{equation*}
  \IH^i(\overline{S}, E)\to \rH^i(S,E); \quad
  \IH^i_s(E)            \to \rH^i_s(E).
\end{equation*}

\begin{lemma}
  \label{l.inj}
With $E$, $S$ and $\overline{S}$ as in~\eqref{p.OpenImmersion}, we have
\begin{align*}
  \IH^0(\overline{S}, E)&=\rH^0(S,E),\\
  \IH^0_s(E)  &=\rH^0_s(E),\\
  \IH^1(\overline{S}, E)&\hookrightarrow \rH^1(S,E),\\
  \IH^1_s(E) &\hookrightarrow \rH^1(S,E).
\end{align*}
\end{lemma}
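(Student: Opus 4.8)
The plan is to compare $j_{!*}E[d]$ with $Rj_*E[d]$ by factoring that comparison through the perverse sheaf $\upp j_*E[d]:={}^pH^0(Rj_*E[d])$. We may assume that $S$ is dense in $\overline S$ (otherwise replace $\overline S$ by the closure of $S$; this changes none of the groups in the statement), so that $Z:=\overline S\setminus S$ is a closed analytic subset of dimension $\leq d-1$. Since $j^*$ is $t$-exact and is left adjoint to $Rj_*$, the functor $Rj_*$ is left $t$-exact; hence $Rj_*E[d]\in{}^p\D^{\geq 0}(\overline S)$, the object $\upp j_*E[d]={}^p\tau_{\leq 0}Rj_*E[d]$ is defined, and there is a canonical triangle
\[
  \upp j_*E[d]\xrightarrow{\ \beta\ }Rj_*E[d]\longrightarrow C'\xrightarrow{\ +1\ },\qquad
  C':={}^p\tau_{\geq 1}Rj_*E[d]\in{}^p\D^{\geq 1}(\overline S).
\]
On the other hand, $j_{!*}E[d]$ is, as recalled just above, a perverse subobject of $\upp j_*E[d]$, so there is a triangle $j_{!*}E[d]\to\upp j_*E[d]\to Q\xrightarrow{+1}$ with $Q$ perverse; applying the $t$-exact functor $j^*$ and using $j^*Rj_*=\mathrm{id}$ together with $j^*j_{!*}E[d]=E[d]$ shows $j^*Q=0$, so $Q$ is supported on $Z$.

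The numerical core of the argument is the vanishing $\calH^k(Q)=\calH^k(C')=0$ for all $k\leq -d$. For $Q$: writing $i_Z\colon Z\hookrightarrow\overline S$ for the inclusion, one has $Q\cong i_{Z*}(i_Z^*Q)$ with $i_Z^*Q$ perverse on $Z$ (as $i_{Z*}$ is $t$-exact); since $Z$ has dimension $\leq d-1$, a perverse sheaf on $Z$ has cohomology sheaves only in degrees $[-(d-1),0]$, whence $\calH^k(Q)=0$ for $k\leq -d$. For $C'$: each perverse cohomology ${}^pH^i(C')$ is a perverse sheaf on the $d$-dimensional $\overline S$, hence has cohomology sheaves in degrees $[-d,0]$, so the shift ${}^pH^i(C')[-i]$ has them in degrees $[-d+i,i]$; and since $C'\in{}^p\D^{\geq 1}$ is built by iterated extensions from these shifts with $i\geq 1$, it has cohomology sheaves only in degrees $\geq -d+1$, i.e.\ $\calH^k(C')=0$ for $k\leq -d$.

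Granting this, I would run the long exact sequences obtained by applying $R\Gamma(\overline S,-)$ to the two triangles. By the hypercohomology spectral sequence, $\rH^k(\overline S,Q)=\rH^k(\overline S,C')=0$ for $k\leq -d$, so the sequences give isomorphisms
\[
  \rH^{-d}(\overline S,j_{!*}E[d])\ \xrightarrow{\ \sim\ }\ \rH^{-d}(\overline S,\upp j_*E[d])\ \xrightarrow{\ \sim\ }\ \rH^{-d}(\overline S,Rj_*E[d])
\]
and injections $\rH^{-d+1}(\overline S,j_{!*}E[d])\hookrightarrow\rH^{-d+1}(\overline S,\upp j_*E[d])\hookrightarrow\rH^{-d+1}(\overline S,Rj_*E[d])$. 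Since $\rH^{k}(\overline S,j_{!*}E[d])=\IH^{k+d}(\overline S,E)$ and $\rH^{k}(\overline S,Rj_*E[d])=\rH^{k+d}(S,E)$, this yields $\IH^0(\overline S,E)=\rH^0(S,E)$ and $\IH^1(\overline S,E)\hookrightarrow\rH^1(S,E)$. The two local statements come out the same way: apply $i^*$ (for $i\colon\{s\}\hookrightarrow\overline S$) to the same two triangles and take cohomology at the point, using $\calH^k(i^*Q)=\calH^k(Q)_s=0$ and $\calH^k(i^*C')=\calH^k(C')_s=0$ for $k\leq -d$, and using that $\calH^k(i^*Rj_*E[d])=(R^{k+d}j_*E)_s=\rH^{k+d}_s(E)$ by the very definition of $\rH^{\bullet}_s(E)$.

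I do not anticipate a genuine obstacle: the argument is a formal manipulation with the perverse $t$-structure, resting on two standard inputs — the left $t$-exactness of $Rj_*$, which makes $\upp j_*E[d]$ and the map $\beta$ (with cone in ${}^p\D^{\geq 1}$) available, and the cohomological amplitude bound for perverse sheaves on a variety of given dimension, which forces $\calH^k(Q)$ and $\calH^k(C')$ to vanish for $k\leq -d$. The only steps needing slight care are the harmless reduction to $S$ dense in $\overline S$, the verification that $j^*Q=0$, and the identification of $\rH^{\bullet}_s(E)$ with the stalk cohomology of $i^*Rj_*E$, which is already recorded in the text preceding the lemma.
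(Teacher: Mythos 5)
Your proof is correct and follows essentially the same route as the paper's: the paper also factors the comparison $j_{!*}E[d]\to Rj_*E[d]$ through $\upp j_*E[d]$, uses the left $t$-exactness of $\upp j_*$ to produce the triangle with cone ${}^p\tau_{\geq 1}Rj_*E[d]$ in ${}^p\D^{\geq 1}$, uses the short exact sequence $0\to j_{!*}E[d]\to \upp j_*E[d]\to F\to 0$ with $F$ perverse and supported on $\overline{S}\setminus S$, and derives the same cohomological vanishing $\calH^k=0$ for $k\leq -d$ from these two facts before taking long exact sequences globally and at the stalk. The only differences are presentational: you spell out the support-dimension argument for $Q$ and the amplitude argument for $C'$, where the paper cites \cite{BBD}*{(2.1.2.1)} and leaves the $F$-vanishing implicit, and you add the (harmless) preliminary reduction to $S$ dense in $\overline S$.
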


\begin{proof}

Since $\upp j_*$ 
is left $t$-exact, we have a distinguished triangle 
\begin{equation}
\label{e.dist}
\upp j_*E[d]\to j_*E[d]\to \upp \tau_{\geq 1} j_*E[d]\to{}\upp j_*E[d+1]. 
\end{equation}
By~\cite{BBD}*{(2.1.2.1)}, $H^i(\upp \tau_{\geq 1} j_*E[d])=0$ for $i\leq -d$.
Therefore, the map $\upp j_*E[d]\to j_*E[d]$ induces isomorphisms
\begin{align*}
\rH^i(\overline{S},\upp j_*E[d])&\to \rH^i(S,E[d]),\\
\rH^i_s(\upp j_*E[d]) &\to \rH^i_s(j_*E[d])
\end{align*}
for $i\leq -d$.  
Moreover, we have injections 
$\rH^{-d+1}(\overline{S},\upp j_*E[d])\to \rH^{-d+1}(S,E[d])$
and $\rH^{-d+1}_s(\upp j_*E[d]) \to \rH^{-d+1}_s(j_*E[d])$.

Similarly, there is an exact sequence
\begin{equation}
  \label{e.intermed}
0\to  j_{!*}E[d]\to\relax \upp j_*E[d]\to F \to 0
\end{equation}
in $\Perv(\overline{S})$ where $F$ is a perverse sheaf supported on 
$\overline{S}\setminus S$.  It follows that
$H^i(F)=0$ for $i\leq -d$.  The result now follows immediately from the
long exact sequence in cohomology (resp. local cohomology at $s$)
induced by~\eqref{e.intermed}.
\end{proof}

\begin{para}
\label{p.normal}
Now suppose that $j:S\to\overline{S}$ of~\eqref{p.OpenImmersion} is an
open immersion of $S$ as a Zariski open subset of
$\overline{S}$~\cite{SaitoANF}*{Definition 1.4}.  Furthermore, suppose
that $\hh$ is a variation of Hodge structure of weight $-1$ on $S$.
We write $\NF(S,\hh)$ for the group of normal functions from $S$ into
$J(\hh)$.  By~\cite{SaitoANF}, there is a canonical isomorphism
$\NF(S,\hh)=\Ext_{\VMHS(S)}^1(\ZZ,\hh)$.  Moreover, if we let
$\AVMHS{S}_{\overline{S}}$ denote the subcategory of variations of
mixed Hodge structure on $S$ which are admissible with respect to the
open immersion $j:S\to\overline{S}$, then the group
$\Ext_{\AVMHS{S}_{\overline{S}}}^1(\ZZ,\hh)$ is a subgroup of
$\NF(S,\hh)$.  Following~\cite{SaitoANF}*{Definition 1.4}, we call
these the \emph{admissible normal functions with respect to
  $\overline{S}$} and write $\NF(S,\hh)^{\ad}_{\overline{S}}$ for this
group.
\end{para}

\begin{fact}
\label{f.Sheaves}
Let $\nu\in\NF(S,\hh)$ be a normal function on $S$.   
Let $\Shv(S)$ denote the category of all sheaves on $S$ and write
$r:\VMHS(S)\to\Shv(S)$ for the forgetful functor taking a variation of mixed
Hodge structure $\hh$ on $S$ to its underlying sheaf of abelian groups
$\hh_{\ZZ}$.
Then
  $\cl_{\ZZ}(\nu)$ is the image of $\nu$ under the composition
$$
\NF(S,\hh)=\Ext^1_{\VMHS(S)}(\ZZ,\hh)\stackrel{r}{\to}\Ext^1_{\Shv(S)}(\ZZ,\hh_{\ZZ})=\rH^1(S,\hh_{\ZZ}).
$$
\end{fact}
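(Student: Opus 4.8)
The plan is to reduce the claimed equality to a comparison of \v{C}ech cocycles on a contractible open cover. First, two routine observations. The last identification in the statement, $\Ext^1_{\Shv(S)}(\ZZ,\hh_{\ZZ})=\rH^1(S,\hh_{\ZZ})$, is the standard one: $\ZZ$ here is the constant sheaf $\ZZ_S$, the $\otimes$-unit, so $\Ext^i_{\Shv(S)}(\ZZ_S,\calF)=\rH^i(S,\calF)$ for every sheaf $\calF$. Next, write $0\to\hh\to\V\to\ZZ\to 0$ for the extension of variations of mixed Hodge structure that corresponds to $\nu$ under Saito's isomorphism $\NF(S,\hh)=\Ext^1_{\VMHS(S)}(\ZZ,\hh)$. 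Since $r$ is exact, the induced map on $\Ext^1$ carries the class of an extension to the class of the extension obtained by applying $r$; hence the image of $\nu$ under the composition displayed in the Fact is the class, in $\rH^1(S,\hh_{\ZZ})=\Ext^1_{\Shv(S)}(\ZZ,\hh_{\ZZ})$, of the underlying extension of sheaves $0\to\hh_{\ZZ}\to\V_{\ZZ}\to\ZZ\to 0$. It thus remains to identify that class with $\cl_{\ZZ}(\nu)$.

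Fix a cover $\{U_\alpha\}$ of $S$ by contractible opens. On $U_\alpha$ the obstruction to lifting $\nu|_{U_\alpha}$ through the surjection $\hh_{\CC}\otimes\cc_S/F^0\hh\otimes_{\oo_S^{\an}}\cc_S\to\jj(\hh)$ lies in $\rH^1(U_\alpha,\hh_{\ZZ})=0$, so we may pick a lift $\tilde\nu_\alpha$ -- holomorphic, say -- a section over $U_\alpha$ of the bundle $\hh_{\CC}/F^0\hh$; by the definition of the connecting homomorphism, the cocycle $\{\tilde\nu_\beta-\tilde\nu_\alpha\}\in Z^1(\{U_\alpha\},\hh_{\ZZ})$ represents $\cl_{\ZZ}(\nu)$. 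On the other hand I recall Carlson's description of $\Ext^1$ of mixed Hodge structures, in the relative form in which Saito builds the isomorphism above: over a contractible $U_\alpha$ one has $\V_{\ZZ}|_{U_\alpha}=\hh_{\ZZ}|_{U_\alpha}\oplus\ZZ\lambda_\alpha$ with $\lambda_\alpha$ a flat lift of $1\in\ZZ$, while $F^0\V|_{U_\alpha}=F^0\hh|_{U_\alpha}+\oo_S^{\an}\cdot(\lambda_\alpha-s_\alpha)$, where $s_\alpha$ is a holomorphic lift of $\tilde\nu_\alpha$ to $\hh$ (this being the way $\V$ records $\nu$). On $U_{\alpha\beta}$ the vectors $\lambda_\alpha-s_\alpha$ and $\lambda_\beta-s_\beta$ both map to the section $1$ of $\ZZ$, so their difference lies in $F^0\V\cap\hh=F^0\hh$; hence $\lambda_\alpha-\lambda_\beta\equiv s_\alpha-s_\beta\pmod{F^0\hh}$, and since $\hh_{\ZZ}$ injects into $\hh/F^0\hh$ (weight $-1$, so $\hh_{\ZZ}\cap F^0\hh=0$) this forces $\lambda_\beta-\lambda_\alpha=\tilde\nu_\beta-\tilde\nu_\alpha$ on $U_{\alpha\beta}$. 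But $\{\lambda_\beta-\lambda_\alpha\}$ is exactly a \v{C}ech representative of the class of the local system extension $\V_{\ZZ}$, so the two classes coincide.

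The only real work here is the second step above: extracting from~\cite{SaitoANF} the precise local description of the lattice $\V_{\ZZ}$ and of the Hodge subbundle $F^0\V$ attached to a normal function, checking the gluing relation for $F^0\V$, and making the sign conventions in the connecting homomorphism and in Saito's isomorphism match (a harmless sign is all that is at stake). Conceptually there is nothing deep: the Fact records that Saito's isomorphism is assembled out of precisely the connecting-homomorphism data that defines $\cl_{\ZZ}$, so that discarding the Hodge-theoretic structure on the associated extension of variations returns that connecting map -- which is exactly why one regards $\cl_{\ZZ}(\nu)$ as the ``topological shadow'' of the normal function $\nu$.
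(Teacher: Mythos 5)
The paper gives no proof of Fact~\ref{f.Sheaves}: it immediately follows the statement with ``We leave the (straightforward) verification of the above statement to the reader.'' So there is nothing to compare against; your job was to supply the verification the authors declare routine, and you have done so correctly. Your strategy --- compute both classes as \v{C}ech cocycles with respect to a cover by small opens, using Carlson's description of $\Ext^1$ in $\MHS$ (in its relative, variation form) to read off the underlying local-system extension $\V_{\ZZ}$ attached to $\nu$ under Saito's isomorphism, and then match the resulting cocycle $\{\lambda_\beta-\lambda_\alpha\}$ against the connecting-homomorphism cocycle $\{\tilde\nu_\beta-\tilde\nu_\alpha\}$ --- is exactly the natural way to carry this out, and the key computation (the gluing relation for $F^0\V$ on overlaps, combined with the weight-$(-1)$ fact that $\hh_{\ZZ}\cap F^0\hh=0$, which lets you lift the congruence mod $F^0\hh$ to an honest equality in $\hh_{\ZZ}$) is both correct and the genuine content of the ``straightforward'' verification.

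Two small points of hygiene, neither of which affects the argument. First, to obtain a holomorphic lift $s_\alpha\in\Gamma(U_\alpha,\hh\otimes\oo_S^{\an})$ of $\tilde\nu_\alpha$ you want $U_\alpha$ not merely contractible but Stein (so that $\rH^1(U_\alpha,F^0\hh\otimes\oo_S^{\an})=0$); alternatively, since the sheaf sequence in the introduction is written with $\cc_S$ coefficients, you could take $s_\alpha$ to be a continuous lift, which exists over any contractible open, and the cocycle $\{\lambda_\beta-\lambda_\alpha\}$ is unchanged. Second, you should make explicit, as you gesture at, that the identification $\NF(S,\hh)=\Ext^1_{\VMHS(S)}(\ZZ,\hh)$ is normalized so that $F^0\V$ is spanned over $F^0\hh$ by $\lambda_\alpha-s_\alpha$ (rather than $\lambda_\alpha+s_\alpha$); this is the convention under which the fibrewise Carlson class of $\V_p$ is $\nu(p)\in J(\hh_p)$, and with it the two cocycles agree on the nose rather than up to a global sign.
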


We leave the (straightforward) verification of the above statement to
the reader.

\begin{para}
\label{p.Rational}
  If $\nu\in H^0(S,\J(\hh))$ is a continuous section of the complex
  torus $J(\hh)$, we write $\cl(\nu)$ for the image of
  $\cl_{\ZZ}(\nu)$ in $\rH^1(S,\hh_{\QQ})$.  If
  $s\in\overline{S}$ with $\overline{S}$ as in~\eqref{p.normal}, we write
  $\sigma_s(\nu)$ for the image of $\sigma_{\ZZ,s}(\nu)$ in $H^1_s(\hh_{\QQ})$.
\end{para}

The following is a type of ``universal coefficient theorem'' for
variations of mixed Hodge structure and 
normal functions.

\begin{lemma}
\label{l.FixThis} Let $S$ be as in~\ref{p.normal}.
\begin{enumerate}
\item Let $\vv$ and $\ww$ be variations of mixed Hodge structure on
  $S$.  If $\pi_0(S)$ is finite, then the natural map
  $$\Hom_{\VMHS_{\ZZ}(S)}(\vv,\ww)\otimes\QQ\to\Hom_{\VMHS_{\QQ}(S)}(\vv_{\QQ},\ww_{\QQ})$$ is an isomorphism.
\item  If $\pi_0(S)$ is finite and $\pi_1(S,s)$ is
  finitely generated for each $s\in S$, then 
the natural map
$$\Ext^1_{\VMHS_{\ZZ}(S)}(\ZZ,\ww)\otimes\QQ\to\Ext^1_{\VMHS_{\QQ}(S)}(\QQ,\ww_{\QQ})$$
is an isomorphism.
\item If the conditions of (ii) are satisfied, then, for any variation
  of pure Hodge structure $\hh$ of weight $-1$ on $S$, the natural map
$$
\NF(S,\hh)\otimes\QQ=\Ext^1_{\VMHS_{\ZZ}(S)}(\ZZ,\hh)\otimes\QQ\to\Ext^1_{\VMHS_{\QQ}(S)}(\QQ,\hh_{\QQ})$$
is an isomorphism. 
\end{enumerate}
\end{lemma}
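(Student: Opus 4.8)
The plan is to prove the three parts in order, since each feeds into the next. For part (i), I would exploit the fact that a morphism of variations of mixed Hodge structure is the same datum as a flat morphism of the underlying local systems that is simultaneously a morphism of mixed Hodge structures at one (equivalently every) point and is compatible with the weight and Hodge filtrations. Because $\pi_0(S)$ is finite, $\Hom_{\VMHS_{\ZZ}(S)}(\vv,\ww)$ is a finitely generated abelian group: it is a subgroup of $\Hom$ of the stalks $\Hom_{\MHS_{\ZZ}}(\vv_s,\ww_s)$ taken over a finite set of basepoints, and $\Hom$ in $\MHS_{\ZZ}$ is finitely generated since it injects into a lattice. Tensoring a finitely generated group with $\QQ$ and comparing with the rational version amounts to the observation that $F$-filtration and $W$-filtration conditions are preserved under tensoring with $\QQ$, and that a rational flat morphism that is a morphism of rational MHS becomes, after clearing denominators, an integral one. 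So part (i) is essentially the statement that $\Hom$ commutes with the base change $\ZZ\to\QQ$ for finitely generated objects, together with the faithful-exactness of $-\otimes\QQ$.

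For part (ii), I would use the standard interpretation of $\Ext^1$ in an abelian category of ``mixed'' objects as a quotient of a group of extension cocycles. Concretely, an element of $\Ext^1_{\VMHS_{\ZZ}(S)}(\ZZ,\ww)$ is an extension $0\to\ww\to\ee\to\ZZ\to0$; the underlying local system splits after a choice of integral lift of the section over $\ZZ$ (this is where finite generation of $\pi_1(S,s)$ enters, ensuring the relevant $\Ext^1$ of local systems — equivalently group cohomology $H^1(\pi_1(S,s),\ww_{\ZZ,s})$ — behaves well), and the extension class is then measured by the failure of that lift to be a morphism of Hodge structures, i.e.\ by data living in a complex vector space modulo an integral lattice plus a rational subspace. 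More cleanly, I would set up the long exact $\Ext$ sequences attached to $0\to\ZZ\to\QQ\to\QQ/\ZZ\to 0$ in $\VMHS_{\ZZ}(S)$ (applying $\Hom(-,\ww)$) and observe: (a) $\Hom(\QQ/\ZZ,\ww)=0$ and $\Ext^1(\QQ,\ww)$ is a $\QQ$-vector space, (b) $\Ext^i_{\VMHS_{\ZZ}(S)}(\QQ,\ww)=\Ext^i_{\VMHS_{\QQ}(S)}(\QQ,\ww_{\QQ})$ because a $\QQ$-structure on the source forces everything in sight to be a $\QQ$-vector space (here one uses that $\VMHS_{\QQ}(S)$ is a full subcategory and that $\Ext$ of $\QQ$-objects computed in $\VMHS_{\ZZ}$ agrees with that computed in $\VMHS_{\QQ}$), and (c) the torsion and divisibility bookkeeping from the long exact sequence identifies $\Ext^1_{\VMHS_{\ZZ}(S)}(\ZZ,\ww)\otimes\QQ$ with $\Ext^1_{\VMHS_{\ZZ}(S)}(\QQ,\ww)$. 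Combining (a)--(c) gives the isomorphism. I would also need that $\Ext^1_{\VMHS_{\ZZ}(S)}(\ZZ,\ww)$ is itself a finitely generated group (or at least that its torsion is bounded and its rank is finite) so that $-\otimes\QQ$ doesn't lose information; this again reduces, via the admissibility/finite-generation hypotheses, to finite generation of $H^1(\pi_1(S,s),\ww_{\ZZ,s})$ and of the relevant Hodge-theoretic data.

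Part (iii) is then immediate: apply part (ii) with $\ww=\hh$, and use the canonical isomorphisms $\NF(S,\hh)=\Ext^1_{\VMHS_{\ZZ}(S)}(\ZZ,\hh)$ and $\NF(S,\hh_{\QQ})=\Ext^1_{\VMHS_{\QQ}(S)}(\QQ,\hh_{\QQ})$ from~\ref{p.normal} (citing~\cite{SaitoANF}). There is nothing further to check once (ii) is in hand, beyond noting that the identification of $\NF$ with $\Ext^1$ is natural in the coefficient ring, which is part of Saito's setup.

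The main obstacle I anticipate is part (ii), specifically the verification that $\Ext^1_{\VMHS_{\ZZ}(S)}(\QQ,\ww)$ coincides with $\Ext^1_{\VMHS_{\QQ}(S)}(\QQ,\ww_{\QQ})$ — i.e.\ that computing the extension group of $\QQ$-linear objects inside the larger $\ZZ$-linear category does not introduce extra classes. One must argue that any extension of $\QQ$ by $\ww$ in $\VMHS_{\ZZ}(S)$ whose source is the constant $\QQ$ automatically has $\QQ$-vector-space structure on the total object compatible with its Hodge data, hence lies in the image of $\VMHS_{\QQ}(S)$; this is a small but genuine compatibility point about how the two categories of variations are defined, and it is where the hypothesis that the extension is \emph{by a fixed $\ww$} (rather than by an arbitrary integral variation) does real work. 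The homological-algebra bookkeeping with the coefficient sequence $0\to\ZZ\to\QQ\to\QQ/\ZZ\to0$ is routine once that identification is granted, and the finite-generation inputs are exactly what the hypotheses on $\pi_0$ and $\pi_1$ are designed to supply.
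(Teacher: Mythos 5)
Your approach to part (ii) diverges substantially from the paper's, and unfortunately it runs into a real obstruction. You propose to apply $\Hom(-,\ww)$ to the coefficient sequence $0\to\ZZ\to\QQ\to\QQ/\ZZ\to 0$ inside $\VMHS_{\ZZ}(S)$ and to compare $\Ext^1_{\VMHS_{\ZZ}(S)}(\QQ,\ww)$ with $\Ext^1_{\VMHS_{\QQ}(S)}(\QQ,\ww_{\QQ})$. But the constant objects $\QQ$ and $\QQ/\ZZ$ are not objects of $\VMHS_{\ZZ}(S)$: a $\ZZ$-variation has an underlying local system of finitely generated free $\ZZ$-modules, whereas $\QQ$ and $\QQ/\ZZ$ are not. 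So the long exact $\Ext$ sequence you want to invoke does not live in that category, and the groups $\Ext^1_{\VMHS_{\ZZ}(S)}(\QQ,\ww)$, $\Hom(\QQ/\ZZ,\ww)$ that drive steps (a)--(c) are not defined. Separately, the auxiliary claim that $\Ext^1_{\VMHS_{\ZZ}(S)}(\ZZ,\ww)$ is finitely generated is false in general: when $\hh$ has weight $-1$ this group is the group of normal functions, which typically has a continuous (intermediate-Jacobian-type) part. (This claim was not actually needed — tensoring with $\QQ$ is exact regardless — but it signals that the picture being used is off.)

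The paper's argument is direct and sidesteps all of this. For surjectivity in (ii), one starts with a $\QQ$-extension $0\to\ww_{\QQ}\to\vv\stackrel{p}{\to}\QQ\to 0$ in $\VMHS_{\QQ}(S)$ and, assuming $S$ connected, uses finite generation of $\pi_1(S,s)$ to produce an integral lattice $\vv_{\ZZ}\subset\vv$ (i.e.\ an object of $\VMHS_{\ZZ}(S)$) with $\vv_{\ZZ}\cap\ww_{\QQ}=\ww$. Then $p(\vv_{\ZZ})=\alpha\ZZ$ for some $\alpha\in\QQ^*$, and scaling by $\alpha$ exhibits the given class as coming from $\Ext^1_{\VMHS_{\ZZ}(S)}(\ZZ,\ww)\otimes\QQ$; the finitely-many-components case is handled by scaling componentwise and adding. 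This is exactly the place where the $\pi_1$-hypothesis does concrete work — to guarantee the existence of the lattice — which is more specific than the ``behaves well'' role you assign it. Your parts (i) and (iii) are fine and match the paper (the paper simply calls (i) obvious and derives (iii) from (ii)), but part (ii) needs to be replaced by a lattice-and-rescale argument of this kind rather than a coefficient-sequence argument.
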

\begin{proof}
(i) is obvious, and (iii) follows directly from (ii).  
We leave to the reader the fact that the map in (ii) is injective.
To see that it is
surjective, suppose 
$$
0\to \ww_{\QQ} \to \vv\stackrel{p}{\to} \QQ\to 0
$$
is an exact sequence of rational variations of mixed Hodge structure
on $S$.   Assume first that $S$ is connected.  Then, using the fact that $\pi_1 (S,s)$ is finitely generated, we
can find a lattice $\vv_{\ZZ}\subset\vv$ such that $\vv_{\ZZ}\cap
\ww_{\QQ}=\ww$.  We then have $p(\vv_{\ZZ})=\alpha\ZZ$ for some
$\alpha\in\QQ^*$.  Scaling by $\alpha$ we obtain the desired result.

We leave the case where $S$ has finitely many connected components
(where we may have to scale by more than one $\alpha$ and add up the
results) to the reader.
\end{proof}

\begin{corollary}
\label{c.FixThis}
  Under the assumptions of Lemma~\ref{l.FixThis} and the notation of~\eqref{p.normal}, we have 
$$
\NF(S,\hh)^{\ad}_{\overline{S}}\otimes\QQ=\Ext^1_{\VMHS(S)^{\ad}_{\overline{S}}}(\QQ,\hh_{\QQ}).$$
\end{corollary}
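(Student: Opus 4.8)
The plan is to deduce the corollary from Lemma~\ref{l.FixThis}(iii) together with the observation that admissibility of a variation of mixed Hodge structure on $S$ with respect to $\overline{S}$ is a property of the underlying rational variation alone, independent of the chosen lattice. Recall from~\eqref{p.normal} that $\NF(S,\hh)^{\ad}_{\overline{S}}=\Ext^1_{\VMHS(S)^{\ad}_{\overline{S}}}(\ZZ,\hh)$ is by definition the subgroup of $\NF(S,\hh)=\Ext^1_{\VMHS_{\ZZ}(S)}(\ZZ,\hh)$ consisting of the classes of integral extensions $0\to\hh\to\vv_{\ZZ}\to\ZZ\to 0$ whose middle term $\vv_{\ZZ}$ is admissible with respect to $j\colon S\to\overline{S}$ in the sense of~\cite{SaitoANF}*{Definition 1.4}. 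The defining conditions of admissibility (quasi-unipotence of the local monodromies, existence of the relative monodromy weight filtration at each boundary point, existence of the limit mixed Hodge structure, and the admissibility of the associated nilpotent orbits) are all conditions on the underlying $\QQ$-variation and its Hodge bundles; none of them refers to the integral structure. Hence an integral variation $\vv_{\ZZ}$ is admissible with respect to $\overline{S}$ if and only if its rationalization $\vv_{\ZZ}\otimes\QQ$ is. This is the point I would be most careful to state explicitly, since everything else is formal.

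First I would check that the isomorphism of Lemma~\ref{l.FixThis}(iii) carries $\NF(S,\hh)^{\ad}_{\overline{S}}\otimes\QQ$ into $\Ext^1_{\VMHS(S)^{\ad}_{\overline{S}}}(\QQ,\hh_{\QQ})$. Since admissible integral variations form a full abelian subcategory of $\VMHS_{\ZZ}(S)$, the inclusion $\Ext^1_{\VMHS(S)^{\ad}_{\overline{S}}}(\ZZ,\hh)\hookrightarrow\Ext^1_{\VMHS_{\ZZ}(S)}(\ZZ,\hh)$ identifies the source with the set of classes admitting an admissible representative; tensoring with the flat $\ZZ$-module $\QQ$ keeps this map injective. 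By the observation above, the rationalization of an admissible integral extension is an admissible rational extension, so composing with the isomorphism of Lemma~\ref{l.FixThis}(iii) lands in $\Ext^1_{\VMHS(S)^{\ad}_{\overline{S}}}(\QQ,\hh_{\QQ})$. This already gives injectivity of the map in the statement.

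For surjectivity I would reuse the lattice construction from the proof of Lemma~\ref{l.FixThis}(ii). Given an admissible rational extension $0\to\hh_{\QQ}\to\vv\stackrel{p}{\to}\QQ\to 0$, assume first that $S$ is connected, and, using that $\pi_1(S,s)$ is finitely generated, choose a lattice $\vv_{\ZZ}\subset\vv$ with $\vv_{\ZZ}\cap\hh_{\QQ}=\hh$. Equipped with the induced weight and Hodge filtrations, $\vv_{\ZZ}$ is an integral variation of mixed Hodge structure with $\vv_{\ZZ}\otimes\QQ=\vv$, hence $\vv_{\ZZ}$ is admissible with respect to $\overline{S}$. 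Writing $p(\vv_{\ZZ})=\alpha\ZZ$ with $\alpha\in\QQ^*$ and scaling by $\alpha$ produces an admissible integral extension of $\ZZ$ by $\hh$ whose image in $\Ext^1_{\VMHS(S)^{\ad}_{\overline{S}}}(\QQ,\hh_{\QQ})$ is the given class. The case of finitely many components is handled exactly as in Lemma~\ref{l.FixThis}, by scaling on each component and adding.

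Thus the only genuine obstacle is verifying the lattice-independence of admissibility, so that the sublattice $\vv_{\ZZ}$ again yields an admissible integral variation; once that is granted, the corollary follows by combining the injectivity argument of the second paragraph with the surjectivity argument of the third, applied to the isomorphism of Lemma~\ref{l.FixThis}(iii).
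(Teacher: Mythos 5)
Your proof is correct and rests on precisely the same observation the paper invokes — that admissibility of a variation of mixed Hodge structure with respect to $\overline{S}$ is a property of the underlying $\QQ$-variation alone, so the isomorphism of Lemma~\ref{l.FixThis}(iii) restricts to the admissible subgroups. You have merely spelled out the injectivity/surjectivity bookkeeping that the paper leaves implicit.
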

\begin{proof}
  This follows directly from the Lemma~\ref{l.FixThis} because
  admissibility of variations of a mixed Hodge structure $\vv$ depends
  only on $\vv_{\QQ}$. 
\end{proof}

\begin{definition}
  We call an element
  $\nu\in\Ext^1_{\VMHS(S)^{\ad}_{\overline{S}}}(\QQ,\hh_{\QQ})$ an
  \emph{admissible $\QQ$-normal function}.
\end{definition}

The main result of this section is the following.

\begin{theorem}
\label{t.InIH}
Let $j:S\to\overline{S}$ be an open immersion of smooth manifolds as
in~\eqref{p.normal} and let $\hh$ be a variation of pure Hodge
structure of weight $-1$ on $S$.  The group homomorphism
$\cl_{\QQ}: \NF(S,\hh)^{\ad}_{\overline{S}}\to \rH^1(S,\hh_{\QQ})$ factors through
$\IH^1(\overline{S},\hh_{\QQ})$.  Similarly, for each
$s\in\overline{S}$, the map $\sigma_s:\NF(S,\hh)^{\ad}_{\overline{S}}\to
\rH^1_s(\hh_{\QQ})$ factors through $\IH^1_s(\hh_{\QQ})$.
\end{theorem}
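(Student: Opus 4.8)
The plan is to construct, for each admissible normal function $\nu$, a canonical lift of $\cl_{\QQ}(\nu)$ to $\IH^1(\overline{S},\hh_\QQ)$ by applying the intermediate extension functor $j_{!*}$ to the extension of variations defining $\nu$, and then to deduce the singularity statement by a diagram chase. It is enough to treat an admissible normal function presented by a short exact sequence $0\to\hh_\QQ\to\vv\to\QQ_S\to 0$ of admissible rational variations on $S$ (this suffices: tensoring the extension of integral variations defining a given $\nu$ with $\QQ$ does not change $\cl_{\QQ}(\nu)$, by Fact~\ref{f.Sheaves}). By Saito's theory, admissibility of $\vv$ with respect to $\overline{S}$ means precisely that the perverse sheaf $\vv_\QQ[d]$ underlies a mixed Hodge module on $S$ whose intermediate extension $j_{!*}\vv_\QQ[d]$, with underlying perverse sheaf $\IC_{\overline{S}}(\vv_\QQ)$, is again a mixed Hodge module on $\overline{S}$; the same holds for $\hh_\QQ[d]$, and $j_{!*}\QQ_S[d]=\QQ_{\overline{S}}[d]$ since $\overline{S}$ is smooth.

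The heart of the argument --- and the step I expect to be the main obstacle --- is to show that applying $j_{!*}$ to this sequence produces a \emph{short exact} sequence
\begin{equation*}
0\to j_{!*}\hh_\QQ[d]\to j_{!*}\vv_\QQ[d]\to \QQ_{\overline{S}}[d]\to 0
\end{equation*}
of mixed Hodge modules on $\overline{S}$. One always obtains at least a complex here whose first map is a monomorphism and whose last map is an epimorphism (a boundary-supported subobject of $j_{!*}\hh_\QQ[d]$, resp.\ a boundary-supported quotient of $j_{!*}\QQ_S[d]$, must vanish); exactness in the middle is an instance of Lemma~\ref{IntExt}, and the reason its hypothesis holds here is the weight structure peculiar to a normal function. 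Since $\hh$ is pure of weight $-1$ while $\Gr^W_0\vv=\QQ$, the mixed Hodge module $\vv_\QQ[d]$ has weights in $[d-1,d]$, and hence so does $j_{!*}\vv_\QQ[d]$, which is a quotient of $\upp j_!\vv_\QQ[d]$ (of weights $\le d$) and a subobject of $\upp j_*\vv_\QQ[d]$ (of weights $\ge d-1$). Consequently $W_{d-1}(j_{!*}\vv_\QQ[d])$ is pure of weight $d-1$ and restricts to $\hh_\QQ[d]$ on $S$; being a submodule of an intermediate extension it has no subobject supported on $\overline{S}\setminus S$, so the decomposition of pure Hodge modules by strict support forces $W_{d-1}(j_{!*}\vv_\QQ[d])=j_{!*}\hh_\QQ[d]$. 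Dually, $\Gr^W_d(j_{!*}\vv_\QQ[d])$ is pure of weight $d$, restricts to $\QQ_S[d]$, and is a quotient of an intermediate extension, hence has no boundary-supported summand and must equal $\QQ_{\overline{S}}[d]$. The displayed sequence is then the inclusion of $W_{d-1}$ followed by the projection onto $\Gr^W_d$.

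Granting this, the displayed sequence represents a class in $\Ext^1_{\Perv(\overline{S})}(\QQ_{\overline{S}}[d],j_{!*}\hh_\QQ[d])$, and I claim that its image under the natural map to $\Ext^1_{\rD^b_c(\overline{S})}(\QQ_{\overline{S}}[d],j_{!*}\hh_\QQ[d])=\rH^{1-d}(\overline{S},j_{!*}\hh_\QQ[d])=\IH^1(\overline{S},\hh_\QQ)$ is a lift of $\cl_{\QQ}(\nu)$. Indeed, pulling the sequence back along $j$ recovers $0\to\hh_\QQ[d]\to\vv_\QQ[d]\to\QQ_S[d]\to 0$ on $S$, whose class in $\Ext^1_{\rD^b_c(S)}(\QQ_S[d],\hh_\QQ[d])=\rH^1(S,\hh_\QQ)$ is $\cl_{\QQ}(\nu)$ by Fact~\ref{f.Sheaves}, and under these identifications the pullback $j^*$ corresponds to the natural map $\IH^1(\overline{S},\hh_\QQ)\to\rH^1(S,\hh_\QQ)$ of~\eqref{p.OpenImmersion}. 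That map is injective by Lemma~\ref{l.inj}, so $\cl_{\QQ}$ factors through $\IH^1(\overline{S},\hh_\QQ)$.

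For the singularities, I would use that $\sigma_s(\nu)$ is, by construction, the image of $\cl_{\QQ}(\nu)$ under the germ map $\rH^1(S,\hh_\QQ)\to\rH^1_s(\hh_\QQ)$. This germ map fits into a commutative square with the restriction-to-$s$ map $\IH^1(\overline{S},\hh_\QQ)\to\IH^1_s(\hh_\QQ)$ and the natural maps from intersection cohomology to ordinary cohomology, so chasing the lift constructed above around the square puts $\sigma_s(\nu)$ into the image of $\IH^1_s(\hh_\QQ)\to\rH^1_s(\hh_\QQ)$, and that map is injective by Lemma~\ref{l.inj}. Essentially all the content thus lies in establishing exactness of $j_{!*}$ on the normal-function sequence; the identifications of the various $\Ext^1$-groups with (intersection) cohomology, and the compatibility of the relevant pullback and germ maps with them, are routine but should be carried out with care.
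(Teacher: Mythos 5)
Your proposal is correct and follows essentially the same route as the paper. The paper first establishes Lemma~\ref{IntExt} (exactness of $j_{!*}$ on short exact sequences of polarizable mixed Hodge modules with $c\le a+1$) and then deduces Corollary~\ref{c.iso} (that $j^*:\Ext^1_{\MHM(\overline{S})^p}(\QQ[d],j_{!*}\hh_{\QQ}[d])\to\NF(S,\hh)^{\ad}_{\overline{S}}\otimes\QQ$ is an isomorphism), from which the theorem follows by the commutative diagram~\eqref{e.NiceDiagram}. Your argument specializes Lemma~\ref{IntExt} to the normal-function sequence and re-proves it in place using the weight bounds on $j_{!*}\vv$ (the content of Lemma~\ref{l.weights}) together with the decomposition of pure Hodge modules by strict support and the BBD characterization of the intermediate extension as having no boundary-supported subobject or quotient; this is precisely the mechanism the paper uses in its proof of Lemma~\ref{IntExt}, only packaged differently (the paper also invokes Lemma~\ref{l.EndExact} to get the mono/epi, which you obtain directly from the ``no boundary sub/quotient'' characterization). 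The one noticeable difference is that you bypass the injectivity part of Corollary~\ref{c.iso}: since the statement only requires a canonical factorization through $\IH^1$, it is enough to produce \emph{a} lift and then observe that $\IH^1(\overline{S},\hh_{\QQ})\hookrightarrow\rH^1(S,\hh_{\QQ})$ (Lemma~\ref{l.inj}) makes it unique, which is a small economy over the paper's route; the paper needs the full isomorphism of Corollary~\ref{c.iso} anyway for Theorem~\ref{t.TateSigma} and beyond.
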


We will use a few lemmas concerning the intermediate extensions of
perverse sheaves and mixed Hodge modules on $S$.  The first concerns
the fact that $j_{!*}$ is ``End-exact'' when applied to perverse
sheaves on $S$; that is, it preserves injections and surjections.  In
N.~Katz's book~\cite{Katz96}*{p. 87}, this fact is stated and a proof
is sketched.  For completeness and the convenience of the reader, we
give a proof here.
\begin{lemma}
\label{l.EndExact}
Let $j:S\to\overline{S}$ be an open immersion as in~\ref{t.InIH}.
 Suppose that the sequence
\begin{equation*}
0\to A\stackrel{f}{\to} B\stackrel{g}{\to} C\to 0
\end{equation*}
is exact in $\Perv(S)$.  Then $j_{!*}(f)$ is an injection and 
$j_{!*}(g)$ is a surjection in $\Perv(\overline{S})$.  
\end{lemma}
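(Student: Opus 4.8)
The plan is to deduce both statements from the realization of the intermediate extension as an image, together with the one-sided exactness of the perverse functors $\upp j_!$ and $\upp j_*$; this is essentially the argument sketched in~\cite{Katz96}.

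First I would recall the relevant construction. Applying $\upp{(-)}={}^pH^0(-)$ to the adjunction morphism $j_!\to j_*$ of functors between derived categories gives a natural transformation $\alpha_M\colon \upp j_! M\to \upp j_* M$, and by definition $j_{!*}M=\im(\alpha_M)\subset \upp j_* M$ in $\Perv(\overline S)$. Thus $j_{!*}M$ comes with a canonical factorization $\upp j_! M\twoheadrightarrow j_{!*}M\hookrightarrow \upp j_* M$ of $\alpha_M$; and since $\alpha$ is natural and the image of a morphism is functorial in any abelian category, every morphism $h\colon M\to N$ in $\Perv(S)$ produces a commutative diagram
\[
\begin{CD}
\upp j_! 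M @>>> j_{!*}M @>>> \upp j_* M\\
@V{\upp j_! h}VV @V{j_{!*}h}VV @V{\upp j_* h}VV\\
\upp j_! N @>>> j_{!*}N @>>> \upp j_* N
\end{CD}
\]
in which the left horizontal arrows are epimorphisms and the right horizontal arrows are monomorphisms. The two inputs I would invoke are that $\upp j_*$ is left exact on perverse sheaves --- this is just the left $t$-exactness of $j_*$, already used in the proof of Lemma~\ref{l.inj} --- and, dually, that $\upp j_!$ is right exact, which follows from the right $t$-exactness of $j_!$ (or from the previous statement by Verdier duality, since $\mathbb D$ exchanges $j_!$ and $j_*$).

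With this in hand the lemma is immediate. For injectivity of $j_{!*}(f)$: since $f$ is a monomorphism and $\upp j_*$ is left exact, $\upp j_* f$ is a monomorphism, hence so is the composite $j_{!*}A\hookrightarrow \upp j_* A\xrightarrow{\upp j_* f}\upp j_* B$; by the diagram this composite equals $j_{!*}A\xrightarrow{j_{!*}(f)}j_{!*}B\hookrightarrow \upp j_* B$, and a composite being a monomorphism forces its first factor to be one. For surjectivity of $j_{!*}(g)$: since $g$ is an epimorphism and $\upp j_!$ is right exact, $\upp j_! g$ is an epimorphism, hence so is the composite $\upp j_! B\xrightarrow{\upp j_! g}\upp j_! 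C\twoheadrightarrow j_{!*}C$; by the diagram this composite equals $\upp j_! B\twoheadrightarrow j_{!*}B\xrightarrow{j_{!*}(g)}j_{!*}C$, and a composite being an epimorphism forces its last factor to be one. (Alternatively, the assertion for $g$ follows from that for $f$ by Verdier duality, which commutes with $j_{!*}$ and swaps monomorphisms and epimorphisms.)

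There is no serious obstacle in this argument. The only points requiring attention are that one must not simply ``apply $j_{!*}$'', since it is neither left nor right exact in general, so the argument has to be routed through $\upp j_!$ and $\upp j_*$; and one should make sure the factorization $\upp j_! M\to j_{!*}M\to \upp j_* M$ is genuinely natural in $M$, which reduces to the naturality of $\alpha$ together with the functoriality of image factorizations in $\Perv$.
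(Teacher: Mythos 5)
Your argument is correct and is essentially the same as the paper's: the paper writes down the three-row commutative diagram with $\psup{j_!}$ surjecting onto $j_{!*}$ and $j_{!*}$ injecting into $\psup{j_*}$, invokes the right exactness of $\psup{j_!}$ and left exactness of $\psup{j_*}$ from \cite{BBD}, and then ``chases the diagram.'' You have simply spelled out that diagram chase explicitly (and noted the Verdier-duality shortcut), but the ingredients and structure of the proof are identical.
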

\begin{proof}
  By~\cite{BBD}*{Prop 1.4.16}, $\psup{j_!}$ is right-exact and
  $\psup{j_*}$ is left-exact.  From the definition of the
  intermediate extension functor (\cite{BBD}*{2.1.7}, we have the following
commutative diagram with exact top and bottom rows.
$$
\xymatrix{
     &\psup{j_!} A\ar[r]\ar@{->>}[d] & \psup{j_!} 
  B \ar[r]\ar@{->>}[d]& \psup{j_!} C \ar[r]\ar@{->>}[d] & 0 \\
     &j_{!*}A\ar[r]\ar@{ >->}[d]      & j_{!*}B\ar[r]\ar@{ >->}[d]     
     & j_{!*}     C \ar@{ >->}[d] &  \\
     0\ar[r] &\psup{j_*} A \ar[r]   & \psup{j_*} B\ar[r]      & \psup{j_*} C
        &  \\    
}
$$
The proposition now follows from chasing the diagram.
\end{proof}
\begin{para}
\label{p.HodgeModulesAnalytic}
For ``$\_$'' a separated reduced analytic space, we write $\MHM(\_)$ for
the category of mixed Hodge modules on ``$\_$'' and $\MHM(\_)^p$ for
the category of polarizable mixed Hodge
modules~\cite{Saito90}*{2.17.8}. 
(It is understood that a left upper $p$ stands for ``perversity", while a right upper $p$ stands for ``polarization" in this paper.) 
If $j:S\to\overline{S}$ is an open immersion as in~\eqref{p.normal},
then we write $\MHM(S)^{p}_{\overline{S}}$ for the category of
polarizable mixed
Hodge modules on $S$ which are extendable to $\overline{S}$.  Recall
that a mixed Hodge module $M$ in $\MHM(S)$ is said to be \emph{smooth} 
if $\rat M$ is isomorphic to $E[\di_S]$ where $E$ is a local system
on $S$ where $\rat:\MHM(S)\to\Perv(S)$ denotes the functor
of~\cite{Saito90}*{Theorem 0.1}.  By~\cite{Saito90}*{Theorem 3.27} we
have an equivalence of categories
$$
\VMHS(S)^{\ad}_{\overline{S}}\cong\MHM(S)^{ps}_{\overline{S}}
$$
where the right hand denotes the full subcategory of
$\MHM(S)^p_{\overline{S}}$ consisting of smooth mixed Hodge modules.
\end{para}

\begin{definition}
\label{d.weights}
If  $a,c\in\ZZ$, then
we say that an object $M$ in $\MHM(\_)$ has \emph{weights in the
  interval $[a,c]$} if $\Gr_i^W M=0$ for $i\not\in [a,c]$.  
\end{definition}

We write $j_{!*}:\MHM(S)_{\overline{S}}\to\MHM(\overline{S})$
for the functor given by 
$$j_{!*}M=\im(H^0 j_{!} M\to H^0 j_*M).$$
By~\cite{Saito90}*{2.18.1}, both $j_{!}$ and $j_*$ preserve
polarizability.  Therefore, for $M$ in $\MHM(S)_{\overline{S}}^p$,
$j_{!*}M$ is in $\MHM(\overline{S})^p$.

\begin{lemma}
\label{l.weights}
If $M$ is an object in $\MHM(S)^p_{\overline{S}}$ with weights in the
interval $[a,c]$, then $j_{!*}M$ also has weights in $[a,c]$.
\end{lemma}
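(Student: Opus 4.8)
The plan is to reduce this to the stability of weights under the functors $j_!$ and $j_*$ in Saito's theory. Regard $M$ as an object of $\Db\MHM(S)$ concentrated in degree $0$; then the hypothesis that $M$ has weights in $[a,c]$ says exactly that, viewed as a complex, $M$ has weights $\leq c$ and weights $\geq a$ in Saito's sense.

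First I would invoke the fact, which is the mixed Hodge module analogue of the weight estimates of~\cite{BBD}*{5.1.14} (see also~\cite{Saito90}), that for an open immersion $j$ the functor $j_!$ sends complexes of weights $\leq w$ to complexes of weights $\leq w$, while $j_*$ sends complexes of weights $\geq w$ to complexes of weights $\geq w$. Applying this to $M$, the complex $j_!M$ has weights $\leq c$ and the complex $j_*M$ has weights $\geq a$. Taking $H^0$ (the zeroth cohomology in $\MHM(\overline{S})$), we conclude that $\Gr^W_i H^0 j_! M=0$ for $i>c$ and that $\Gr^W_i H^0 j_* M=0$ for $i<a$.

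It remains to transfer these bounds to $j_{!*}M=\im(H^0 j_! M\to H^0 j_* M)$. Because the weight filtration on a mixed Hodge module is strictly preserved by morphisms, a quotient of an object with weights $\leq c$ again has weights $\leq c$, and a subobject of an object with weights $\geq a$ again has weights $\geq a$. Since $j_{!*}M$ is simultaneously a quotient of $H^0 j_! M$ and a subobject of $H^0 j_* M$, it follows that $j_{!*}M$ has weights in $[a,c]$, as desired.

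I do not expect a genuine obstacle here: the content is entirely carried by the weight-stability theorems for $j_!$ and $j_*$, which are among the foundational results of the theory. The only points needing care are keeping track of the trivial degree shift when passing from the complexes $j_!M$, $j_*M$ to their zeroth cohomology objects in $\MHM(\overline{S})$, and using strictness of morphisms of mixed Hodge modules with respect to $W$ to control the image defining $j_{!*}M$.
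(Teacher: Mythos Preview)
Your argument is correct and is essentially identical to the paper's proof: the paper invokes \cite{Saito90}*{Proposition 2.26} for the weight bounds on $H^0 j_! M$ and $H^0 j_* M$, then uses strictness (phrased as exactness of $\Gr^W_i$) to conclude for the image $j_{!*}M$. The only cosmetic difference is that you state the transfer to $j_{!*}M$ in terms of subobjects and quotients inheriting weight bounds, which is equivalent to the paper's formulation via exactness of the graded functors.
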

\begin{proof}
  By~\cite{Saito90}*{Proposition 2.26}, $H^0{j_!}M$ has weights $\leq
  c$ and $H^0{j_*}M$ has weights $\geq a$.  Since maps between
  polarizable mixed Hodge modules are strict with respect to the
  weight filtration, the functor $\Gr^W_i:\MHM(\overline{S})^p\to 
\MHM(\overline{S})^{p}$ is
  exact~\cite{HodgeII}*{Proposition 1.1.11} for each $i\in\ZZ$.  It follows that
  $j_{!*}M=\im(H^0{j_!}M \to H^0{j_*}M)$ has weights in $[a,c]$.
\end{proof}

\begin{para}
\label{l.Beilinson}
The functor $j_{!*}$ is not in general exact.   
However, for $C,A$ pure of respective weights 
$c$ and $a$ in $\MHM(S)^p$,
\begin{equation*}
  \Ext^j(C,A)=0\ \text{if $c<a+j$.}
\end{equation*}
This is stated explicitly in the algebraic case in
\cite{Saito90}*{Eq.~4.5.3}; however, the proof given there clearly
applies to the polarizable analytic case.

From this and the fact that $j_{!*}$ commutes with finite direct sums,
we see that $j_{!*}$ preserves the exactness of the sequence
\begin{equation}
  \label{e.ex}
 0\to A\stackrel{f}{\to} B\stackrel{g}{\to} C\to 0 
\end{equation}
provided
$A$ is pure of weight $a$ and $C$ is pure of weight $c$ with $c<a+1$.
\end{para}

\begin{lemma}
\label{IntExt}
Suppose that the entries in~\eqref{e.ex} consist of objects in 
$\MHM(S)^p_{\overline{S}}$ 
where $A$ is pure of weight $a$ and $C$ is pure of weight $c$ with $c\leq a+1$.
Then the sequence
\begin{equation}
  \label{eq:IntExt}
0\to j_{!*}A\stackrel{ j_{!*}(f)}{\to} j_{!*}B
     \stackrel{ j_{!*}(g)}{\to} j_{!*}C\to 0  
\end{equation}
is exact in $\MHM(\overline{S})^p$. 
\end{lemma}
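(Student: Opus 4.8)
The plan is to reduce to the case $c \le a$, where the preceding discussion in~\eqref{l.Beilinson} already gives the conclusion, plus the borderline case $c = a+1$, which is the genuine content here. So assume $c = a+1$; I want to show $j_{!*}$ is exact on~\eqref{e.ex}. The idea is to exploit the structure of~\eqref{e.ex} as an extension: the class of~\eqref{e.ex} lives in $\Ext^1_{\MHM(S)^p_{\overline{S}}}(C,A)$ with $C$ pure of weight $c = a+1$ and $A$ pure of weight $a$, so this is exactly the ``first nonvanishing'' $\Ext^1$ allowed by~\eqref{l.Beilinson}. The middle term $B$ then has weights in $[a,a+1]$, with $\Gr^W_a B \cong A$ and $\Gr^W_{a+1} B \cong C$ (using strictness of morphisms of polarizable MHM with respect to $W$, and exactness of $\Gr^W_i$ as in~\cite{HodgeII}*{Proposition 1.1.11}, just as in the proof of Lemma~\ref{l.weights}).

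First I would record what is automatic. Since $H^0 j_!$ is right exact and $H^0 j_*$ is left exact (\cite{BBD}*{Prop 1.4.16}), and $j_{!*}$ sits between them, Lemma~\ref{l.EndExact}'s mixed Hodge module analogue (the same diagram chase, now in $\MHM(\overline{S})^p$) shows $j_{!*}(f)$ is injective and $j_{!*}(g)$ is surjective. So the only thing to prove is exactness in the middle: $\ker j_{!*}(g) = \im j_{!*}(f)$, equivalently that the perverse/MHM cohomology object $Q := \ker j_{!*}(g)/\im j_{!*}(f)$ vanishes. By Lemma~\ref{l.weights}, $j_{!*}A$ has weight $a$ and $j_{!*}C$ has weight $a+1$, and $j_{!*}B$ has weights in $[a,a+1]$; a weight count then forces $Q$ to be a subquotient of $j_{!*}B$ with weights in $[a,a+1]$ as well, which is not yet enough on its own. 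The cleaner route is: apply $\Gr^W_i$ to~\eqref{eq:IntExt}. For $i \ne a, a+1$ everything is zero. For $i = a$: $\Gr^W_a j_{!*}A = j_{!*}A$ (it is pure), $\Gr^W_a j_{!*}C = 0$, so $\Gr^W_a$ of~\eqref{eq:IntExt} reads $0 \to j_{!*}A \to \Gr^W_a j_{!*}B \to 0 \to 0$, whence $\Gr^W_a j_{!*}B = j_{!*}A$. For $i = a+1$: $\Gr^W_{a+1} j_{!*}A = 0$, $\Gr^W_{a+1} j_{!*}C = j_{!*}C$, so we get $0 \to 0 \to \Gr^W_{a+1} j_{!*}B \to j_{!*}C \to 0$ if I knew the sequence were right exact — but that is what I am trying to prove, so I should instead argue directly that $\Gr^W_{a+1} j_{!*}B \twoheadrightarrow j_{!*}C$ and $\Gr^W_a j_{!*}B \cong j_{!*}A$ are compatible with the maps, then conclude by length/weight additivity that $j_{!*}(f)$ and $j_{!*}(g)$ realize the weight filtration of $j_{!*}B$, forcing exactness.

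To make the last step rigorous I would use the vanishing $\Ext^1_{\MHM(\overline{S})^p}(j_{!*}C, j_{!*}A) $ supplied by~\eqref{l.Beilinson} applied on $\overline{S}$ (here $j_{!*}C$ is pure of weight $a+1$ and $j_{!*}A$ pure of weight $a$, and $a+1 \not< a + 1$ is false, i.e. $c < a+j$ fails for $j=1$ — so I should be careful: \eqref{l.Beilinson} gives vanishing only when $c < a+j$, and $c = a+1$, $j=1$ gives $c = a+j$, \emph{not} strict, so $\Ext^1$ need not vanish). This is precisely why the statement only claims the weaker conclusion. So the honest argument is: form $B' := \ker j_{!*}(g)$; it is an extension $0 \to j_{!*}A \to B' \to \ker/\im \to 0$ with $j_{!*}A$ pure of weight $a$; one shows the cokernel $j_{!*}B / B'$ injects into $j_{!*}C$ (weight $a+1$, pure) and that $B'$ has weights $\le a$ — but $j_{!*}B$ has weights in $[a,a+1]$ and $\Gr^W_{a+1} j_{!*}B$ must then map onto $\Gr^W_{a+1} j_{!*}C = j_{!*}C$; counting, $\Gr^W_{a+1} j_{!*}B$ and $j_{!*}C$ have equal length, so the surjection is an isomorphism and hence $j_{!*}(g)$ is surjective with kernel of weight $\le a$, which must equal $j_{!*}A = \im j_{!*}(f)$ since the latter is the largest weight-$a$ subobject. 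The main obstacle I anticipate is exactly this weight-bookkeeping at the borderline $i = a+1$: one has to pin down $\Gr^W_{a+1} j_{!*}B$ without circularly assuming the exactness one wants, and the tool for that is the strictness of MHM morphisms together with the exactness of $\Gr^W$, combined with the observation that on the open set $S$ the sequence $\Gr^W(\text{\eqref{e.ex}})$ splits into $0 \to A \to \Gr^W_a B \to 0$ and $0 \to \Gr^W_{a+1} B \to C \to 0$ with $\Gr^W_a B = A$, $\Gr^W_{a+1}B = C$, and $j_{!*}$ applied to these \emph{short} exact sequences of pure objects of a single weight is exact by~\eqref{l.Beilinson} (now $c = a$ strictly less than $a + 1$), so $j_{!*}\Gr^W_i B = \Gr^W_i j_{!*}B$ for $i = a, a+1$; that identification closes the argument.
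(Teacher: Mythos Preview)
Your reduction to $c=a+1$ is correct, as is the observation via Lemma~\ref{l.EndExact} that $j_{!*}(f)$ is injective and $j_{!*}(g)$ surjective, and you correctly identify that the crux is showing $\Gr^W_a j_{!*}B = j_{!*}A$ and $\Gr^W_{a+1} j_{!*}B = j_{!*}C$. But the final step, where you claim $j_{!*}\Gr^W_i B = \Gr^W_i j_{!*}B$, is unjustified and is in fact the entire content of the lemma. The ``short exact sequences'' you write on $S$, namely $0 \to A \to \Gr^W_a B \to 0$ and $0 \to \Gr^W_{a+1}B \to C \to 0$, are just isomorphisms; applying $j_{!*}$ to them and invoking~\eqref{l.Beilinson} yields only $j_{!*}(\Gr^W_a B) \cong j_{!*}A$ and $j_{!*}(\Gr^W_{a+1}B) \cong j_{!*}C$, which says nothing about $\Gr^W_i (j_{!*}B)$. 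Your earlier length-counting attempt is similarly unjustified: nothing you have written forces $\Gr^W_{a+1} j_{!*}B$ and $j_{!*}C$ to have the same length.

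What is missing is the characterization of $j_{!*}B$ from \cite{BBD}*{Corollaire 1.4.25}: it is the unique extension of $B$ to $\overline{S}$ admitting no nonzero subobject or quotient supported on $Z = \overline{S}\setminus S$. This is exactly the tool the paper uses. Since pure polarizable Hodge modules of a fixed weight form a semisimple category, the surjection $\Gr^W_{a+1} j_{!*}B \twoheadrightarrow j_{!*}C$ splits, so $\Gr^W_{a+1} j_{!*}B = j_{!*}C \oplus D$; restricting to $S$ (where $j^*$ is exact, hence commutes with $\Gr^W$) shows $j^*D = 0$, i.e.\ $D$ is supported on $Z$. Then the composite $j_{!*}B \twoheadrightarrow \Gr^W_{a+1}j_{!*}B \twoheadrightarrow D$ exhibits $D$ as a quotient of $j_{!*}B$ supported on $Z$, forcing $D=0$. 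The dual argument handles $\Gr^W_a$. Your weight bookkeeping alone cannot rule out such an extra boundary-supported summand; that is precisely what the defining property of $j_{!*}$ is for, and you never invoke it.
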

\begin{proof}
Write $i:Z\to\overline{S}$ for the complement of $S$ in $\overline{S}$.
  The lemma will follow mainly from~\cite{BBD}*{Corollary 1.4.25}
  which gives the following description of the intermediate extension
  in our context.

\begin{itemize}
\item[(*)] $j_{!*} B$ is the 
unique prolongement of $B$ in $\MHM(\overline{S})$ with no non-trivial
sub-object or quotient object in the essential image of the functor
$i_*:\MHM(Z)\to \MHM(\overline{S})$.
\end{itemize}

Here we have used the fact that $\rat:\MHM(\_)\to\Perv(\_)$ is faithful
and exact to deduce (*) 
from the corresponding statement 
in~\cite{BBD}.

By~\eqref{l.Beilinson}, we already know that the theorem holds for $c\leq a$;
thus, it suffices to consider the case $c=a+1$.

By Lemma~\ref{l.weights}, we know that $j_{!*}B$ has weights in the
interval $[a,c]$.   By Lemma~\ref{l.EndExact}
and the exactness of 
$\Gr^W$, we know that 
$\Gr^W_{c} j_{!*} B = j_{!*}C\oplus D$ for some object $D$ in 
$\MHM(\overline{S})^p$ which 
is pure of weight $c$.  By the definition of $j_{!*} B$, we know that $D$ is 
supported on $Z$.  But, since $j_{!*}B$ surjects onto $D$ via the composition
$$
j_{!*}B \twoheadrightarrow \Gr^W_{c} j_{!*}B \twoheadrightarrow D
$$
this contradicts~(*) 
unless $D=0$.  

Thus $\Gr^W_{c} j_{!*}B=j_{!*}C$.   By similar reasoning, we see that 
$\Gr^W_a j_{!*} B= j_{!*} A$.  
\end{proof}

\begin{lemma}
\label{l.MHMVMHS} 
Let $S$ be as in Theorem~\ref{t.InIH}.  Then the functor
$\VMHS_{\QQ}(S)^{\ad}_{\overline{S}}\leadsto \MHM(S)^p_{\overline{S}}$
sending a variation $\calV$ to $\calV[d]$ induces isomorphisms  
$$\Ext^i_{\VMHS_{\QQ}(S)^{\ad}_{\overline{S}}}(\calV,\calW)
\stackrel{\cong}{\to}
\Ext^i_{\MHM(S)^p_{\overline{S}}}(\calV[d],\calW[d])
$$
for $i=0,1$.
\end{lemma}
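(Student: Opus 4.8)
The plan is to factor the functor $\calV\mapsto\calV[d]$ (with $d=\dim S$) as a composition
$$
\VMHS_{\QQ}(S)^{\ad}_{\overline{S}}\ \xrightarrow{\ \sim\ }\ \MHM(S)^{ps}_{\overline{S}}\ \hookrightarrow\ \MHM(S)^{p}_{\overline{S}},
$$
where the first arrow is Saito's equivalence recorded in~\eqref{p.HodgeModulesAnalytic} (\cite{Saito90}*{Theorem 3.27}) and the second is the inclusion of the full subcategory of smooth objects. Since an equivalence of abelian categories preserves $\Ext$-groups, the first arrow identifies $\Ext^i_{\VMHS_{\QQ}(S)^{\ad}_{\overline{S}}}(\calV,\calW)$ with $\Ext^i_{\MHM(S)^{ps}_{\overline{S}}}(\calV[d],\calW[d])$, so it suffices to show that the inclusion induces isomorphisms $\Ext^i_{\MHM(S)^{ps}_{\overline{S}}}(M,N)\to\Ext^i_{\MHM(S)^{p}_{\overline{S}}}(M,N)$ for $i=0,1$ and $M,N$ smooth. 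For $i=0$ there is nothing to do: the subcategory is full.

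For $i=1$ I would reduce everything to the claim that $\MHM(S)^{ps}_{\overline{S}}$ is closed under extensions inside $\MHM(S)^{p}_{\overline{S}}$. Granting this, the isomorphism on $\Ext^1$ is formal: surjectivity holds because the middle term of an extension of smooth objects is again smooth; injectivity holds because a splitting in $\MHM(S)^{p}_{\overline{S}}$ of a sequence of smooth objects is a morphism in $\MHM(S)^{ps}_{\overline{S}}$ by fullness; and the Baer sum is visibly compatible with the inclusion. To establish the closure claim, suppose
$$
0\to N\to B\to M\to 0
$$
is exact in $\MHM(S)^{p}_{\overline{S}}$ with $M$ and $N$ smooth, and apply the exact faithful functor $\rat\colon\MHM(S)\to\Perv(S)$ of~\cite{Saito90}*{Theorem 0.1}. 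This produces a short exact sequence $0\to\rat N\to\rat B\to\rat M\to 0$ in $\Perv(S)$, hence a distinguished triangle in $\mathrm{D}^b_c(S)$. Since $M,N$ are smooth we have $\rat N\cong E_N[d]$ and $\rat M\cong E_M[d]$ for local systems $E_N,E_M$ (here I use that $S$ is smooth and equidimensional of dimension $d$), so the long exact cohomology sequence forces $\mathcal{H}^k(\rat B)=0$ for $k\neq -d$ and realizes $E:=\mathcal{H}^{-d}(\rat B)$ as an extension of the locally constant sheaf $E_M$ by $E_N$. An extension of local systems of $\QQ$-vector spaces is again a local system (locally it splits, since $\rH^1$ of a contractible open vanishes), so $\rat B\cong E[d]$; thus $B$ is smooth, and being an object of $\MHM(S)^{p}_{\overline{S}}$ by hypothesis, it lies in $\MHM(S)^{ps}_{\overline{S}}$.

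The only genuinely mixed-Hodge-module input is the closure-under-extensions statement, so that is the step I would treat as the main point; everything else is category-theoretic bookkeeping. In writing the argument carefully I would want to make explicit that $\rat$ is exact — so that a short exact sequence of mixed Hodge modules really does give a distinguished triangle of perverse sheaves — and, if $S$ is disconnected, to run the local-splitting step on each connected component. It is also worth noting, as a sanity check, that the same reasoning shows $\MHM(S)^{ps}_{\overline{S}}$ is an abelian subcategory of $\MHM(S)^{p}_{\overline{S}}$, consistently with its being equivalent to the abelian category $\VMHS_{\QQ}(S)^{\ad}_{\overline{S}}$.
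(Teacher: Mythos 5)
Your proposal is correct and follows essentially the same route as the paper's own (much terser) proof: for $i=0$ both invoke Saito's equivalence from~\cite{Saito90}*{Theorem 3.27} together with fullness of the subcategory of smooth objects, and for $i=1$ both reduce to the fact that an extension of smooth perverse sheaves is again smooth, which the paper records as an ``easy fact'' and you spell out via exactness of $\rat$ and the long exact cohomology sequence.
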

\begin{proof}
  For $i=0$ this follows from~\cite{Saito90}*{Theorem 3.27}.   For
  $i=1$, this follows from the (easy) fact that an extension of smooth
  perverse sheaves is smooth.   
\end{proof}

\begin{corollary}
\label{c.iso}
Suppose $j:S\to\overline{S}$ and $\hh$ are as in
Theorem~\ref{t.InIH}. Then the restriction map 
$$
\Ext^1_{\MHM(\overline{S})^p}(\QQ[d],j_{!*}\hh_{\QQ}[d])\stackrel{j^*}{\to} 
\Ext^1_{\MHM(S)_{\overline{S}}^p}(\QQ[d],\hh_{\QQ}[d])=\NF(S,\hh)^{\ad}_{\overline{S}}\otimes\QQ$$
is an isomorphism. 
\end{corollary}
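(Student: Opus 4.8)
The plan is to exhibit an explicit two-sided inverse of $j^*$ built from the intermediate extension functor, so the argument rests on two formal properties of $j_{!*}$ together with Lemma~\ref{IntExt}: that $j^*\circ j_{!*}$ is (naturally isomorphic to) the identity functor on $\MHM(S)_{\overline{S}}$, and that $j_{!*}$ is fully faithful on $\MHM(S)_{\overline{S}}$. Both are standard and both follow quickly from the characterization~(*) of the intermediate extension recalled in the proof of Lemma~\ref{IntExt} (a morphism $j_{!*}A\to j_{!*}B$ killed by $j^*$ has image supported on $\overline{S}\setminus S$, and any $A\to B$ extends to $j_{!*}A\to j_*B$, which then factors through $j_{!*}B$). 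Before starting I would pin down the weights: since $\hh$ has weight $-1$ and $S$ is smooth of dimension $d$, the module $\hh_{\QQ}[d]$ is pure of weight $d-1$, while $\QQ[d]$ is pure of weight $d$; and since $\overline{S}$ is smooth with $S$ dense, $j_{!*}\QQ[d]=\QQ[d]$ on $\overline{S}$ (treating the connected components of $\overline{S}$ separately if need be).

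For surjectivity of $j^*$, a class in $\Ext^1_{\MHM(S)^p_{\overline{S}}}(\QQ[d],\hh_{\QQ}[d])$ is represented by an exact sequence $0\to\hh_{\QQ}[d]\to\vv\to\QQ[d]\to 0$ in $\MHM(S)^p_{\overline{S}}$, and I would apply $j_{!*}$ to it. Lemma~\ref{IntExt} applies with $a=d-1$ and $c=d$ — this is precisely the borderline case $c=a+1$ for which that lemma was designed — and yields an exact sequence $0\to j_{!*}\hh_{\QQ}[d]\to j_{!*}\vv\to\QQ[d]\to 0$ in $\MHM(\overline{S})^p$, i.e.\ a class in the source of $j^*$. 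Because $j^*\circ j_{!*}$ is the identity, restricting this sequence along $j$ returns the original one; hence the class we started with lies in the image of $j^*$.

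For injectivity, suppose $0\to j_{!*}\hh_{\QQ}[d]\to\mm\to\QQ[d]\to 0$ in $\MHM(\overline{S})^p$ has image under $j^*$ a split exact sequence. The crucial observation is that $\mm$ has no nonzero subobject or quotient object supported on $Z:=\overline{S}\setminus S$: a subobject of $\mm$ supported on $Z$ would meet $j_{!*}\hh_{\QQ}[d]$ in a subobject of $j_{!*}\hh_{\QQ}[d]$ supported on $Z$, hence trivially, so it would embed into $\QQ[d]=j_{!*}\QQ[d]$, which again has no such subobject; the statement for quotients is dual. By~(*) this forces $\mm\cong j_{!*}(j^*\mm)$. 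Then full faithfulness of $j_{!*}$ lets me lift the retraction $j^*\mm\to\hh_{\QQ}[d]$ (which exists since the restricted sequence splits) to a morphism $\mm\to j_{!*}\hh_{\QQ}[d]$; composing with $j_{!*}\hh_{\QQ}[d]\hookrightarrow\mm$ produces an endomorphism of $j_{!*}\hh_{\QQ}[d]$ restricting to the identity over $S$, which must be the identity since $j^*$ is faithful. Hence the sequence over $\overline{S}$ splits and the class vanishes.

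I expect the only real care to be in the bookkeeping of the injectivity step — verifying that $\mm$ is genuinely an intermediate extension, so that~(*) and full faithfulness apply, and that the splitting transports back up — and in checking that the weights line up so Lemma~\ref{IntExt} can be invoked in the surjectivity step. Nothing here is deep once Lemma~\ref{IntExt} is in hand; it is essentially a packaging of that lemma with the elementary yoga of intermediate extensions.
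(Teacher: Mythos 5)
Your proof is correct and follows essentially the same route as the paper's: both arguments hinge on Lemma~\ref{IntExt} applied in the borderline weight case $c=a+1$ for surjectivity, and on the characterization~(*) of the intermediate extension to show $\mm\cong j_{!*}(j^*\mm)$ for injectivity. You spell out the verification that $\mm$ has no sub- or quotient-object supported on $\overline{S}\setminus S$ and invoke full faithfulness to transport the retraction, where the paper compresses this to ``it is easy to see from Lemma~\ref{IntExt} that $B=j_{!*}j^*B$'' and applies $j_{!*}$ directly to the section rather than the retraction; this is only a difference of presentation.
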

\begin{proof}
Lemma~\ref{IntExt} shows that $j^*$ is surjective.  On the other hand,
suppose
$\overline{\nu}\in\Ext^1_{\MHM(\overline{S})^p}(\QQ[d],j_{!*}\hh[d])$
given by the sequence
$$
0\to j_{!*}\hh[d]\to B\to \QQ[d]\to 0
$$
is in the kernel of $j^*$. Then there is a splitting $s:\QQ[d]\to
j^*B$.  Applying $j_{!*}$ to $s$, we obtain a splitting $\QQ[d]\to
j_{!*}j^*B$.   But it is easy to see from Lemma~\ref{IntExt} that
$B=j_{!*}j^*B$ (as both are extensions of $\QQ[d]$ by
$j_{!*}\hh[d]$). Therefore $\overline{\nu}=0$.   It follows that $j^*$
is injective.
\end{proof}

\begin{proof}[Proof of Theorem~\ref{t.InIH}]
The diagram
\begin{equation}
\label{e.NiceDiagram}
\xymatrix{
\Ext^1_{\MHM(\overline{S})^p}(\QQ[d],j_{!*}\hh_{\QQ}[d])\ar[r]^{j^*}\ar[d]_{\rat} &
\Ext^1_{\MHM(S)_{\overline{S}}^p}(\QQ[d],\hh_{\QQ}[d])\ar[d]^{\rat} \\
\IH^1(\overline{S},\hh_{\QQ})\ar[r]^{j^*} & 
\rH^1(S,\hh_{\QQ})\\
}
\end{equation}
commutes.  The assertions in Theorem~\ref{t.InIH} are, thus, a direct
consequence of the fact that the arrow on top is an isomorphism~\eqref{c.iso}.
\end{proof}

\begin{para}
\label{p.Tate}
Suppose $H$ is a $\QQ$-mixed Hodge structure.  We call a class 
$v\in H_{\QQ}$ \emph{Tate of weight w} if it can be expressed as the
image of $1$ under a morphism $\QQ(-w/2)\to H$ of Hodge structures
(for some even integer $w$).   
\end{para}

\begin{theorem}
\label{t.TateSigma}
Let $\hh$ be a variation of pure Hodge structure as in
Theorem~\ref{t.InIH}.   Then, for $s\in\overline{S}$, 
the class $\sigma_s(\nu)\in\IH^1_{s}(\hh_{\QQ})$ is Tate of weight $0$.
\end{theorem}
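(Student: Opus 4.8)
The plan is to reduce the statement to a computation of the weight filtration on the stalk of the intermediate extension, using the admissibility of $\nu$ and the structure theory of mixed Hodge modules recalled above. By Theorem~\ref{t.InIH}, $\sigma_s(\nu)$ lives in $\IH^1_s(\hh_{\QQ})=\rH^{1-d}(\{s\}, i^*j_{!*}\hh_{\QQ}[d])$, and by Corollary~\ref{c.iso} the normal function $\nu$ comes from a genuine extension of mixed Hodge modules
\begin{equation*}
0\to j_{!*}\hh_{\QQ}[d]\to B\to \QQ[d]\to 0
\end{equation*}
on $\overline{S}$, with $j_{!*}\hh_{\QQ}[d]$ pure of weight $d-1$ (since $\hh$ has weight $-1$, the shift makes $\hh[d]$ pure of weight $d-1$, and Lemma~\ref{l.weights} preserves this) and $\QQ[d]$ pure of weight $d$. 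Thus $B$ has weights in $[d-1,d]$.

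First I would apply $i^*$ and pass to the associated long exact sequence of stalk cohomology. The class $\sigma_s(\nu)$ is, by construction, the image of the canonical generator of $\rH^{1-d}(\{s\},i^*\QQ[d])=\QQ$ (i.e. $\rH^0(\{s\},\QQ)=\QQ$) under the connecting map into $\IH^1_s(\hh_{\QQ})=\rH^{1-d}(\{s\},i^*j_{!*}\hh_{\QQ}[d])$. The key point is that this connecting map is a morphism in a suitable category carrying mixed Hodge structures: $i^*$ of a mixed Hodge module, followed by taking cohomology sheaves and then the stalk at a point, lands in $\MHS_{\QQ}$ (this is part of Saito's formalism), and the long exact sequence is an exact sequence of mixed Hodge structures. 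Since $\rH^0(\{s\},\QQ)=\QQ(0)$ is pure Tate of weight $0$, its image under any morphism of mixed Hodge structures is a Tate class of weight $0$ — provided the image is nonzero, it sits in the weight-$0$ part and is the image of $1$ under a map $\QQ(0)\to \IH^1_s(\hh_{\QQ})$, which is exactly the definition in~\eqref{p.Tate}. (If the image is zero the statement is vacuous.)

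So the real content is verifying that $i^*$ of the exact triangle of mixed Hodge modules, after taking stalk cohomology at $s$, produces a long exact sequence in $\MHS_{\QQ}$. This I would extract from Saito's theory: $i^*$ (more precisely $i^*$ followed by $\upp H^k$) sends $\MHM(\overline{S})$ to $\MHM(\{s\})=\MHS_{\QQ}$, it is compatible with the perverse cohomology long exact sequence attached to a distinguished triangle, and on $\{s\}$ the realization $\rat$ identifies $\MHM(\{s\})$ with $\MHS_{\QQ}$ with underlying vector space the ordinary stalk cohomology. The main obstacle — and the only place one has to be a little careful — is bookkeeping the shifts and making sure that the "stalk" $\rH^{1-d}(\{s\},i^*j_{!*}\hh_{\QQ}[d])$ is naturally $\rat$ of a single mixed Hodge structure $\upp H^{1-d}(i^*j_{!*}\hh_{\QQ}[d])$ (equivalently that the spectral sequence computing stalk cohomology from perverse cohomology of $i^*$ degenerates enough in the relevant degree), so that "Tate of weight $0$" is a statement about that Hodge structure and not about some associated graded. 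Once the long exact sequence is set up in $\MHS_{\QQ}$, the conclusion is immediate from the purity of $\QQ(0)$ and the definition of Tate class. I would also remark that the weight estimate (weights of $B$ in $[d-1,d]$) is what forces the relevant stalk group to be pure Tate in the first place, matching the claim in Theorem~\ref{t.TateSigma} that the singularity lies in the weight-$0$ piece.
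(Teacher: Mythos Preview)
Your proposal is correct and follows essentially the same idea as the paper: lift $\nu$ to the extension class $\overline{\nu}\in\Ext^1_{\MHM(\overline{S})}(\QQ[d],j_{!*}\hh_{\QQ}[d])$ via Corollary~\ref{c.iso}, apply $i^*$, and observe that the resulting class in $\IH^1_s(\hh_{\QQ})$ is the image of $1\in\QQ(0)$ under a morphism of mixed Hodge structures. The paper packages this last step more cleanly: rather than running the long exact sequence attached to the triangle $i^*j_{!*}\hh_{\QQ}[d]\to i^*B\to i^*\QQ[d]\to$, it simply views $\overline{\nu}$ as a morphism $\QQ[d]\to (j_{!*}\hh_{\QQ}[d])[1]$ in $\Db\MHM(\overline{S})$, applies $i^*$ and then $\rat$, and invokes the general Fact~\ref{l.Tate} that the image of $\rat:\Hom_{\Db\MHM}(M,N)\to\Hom_{\Db\Perv}(\rat M,\rat N)$ always consists of Tate classes of weight $0$, the Hodge structure on the target being defined via $H^0\pi_*\HOM(M,N)$.

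Two small points. First, your worry about the spectral sequence is unnecessary: over the point $\{s\}$ the perverse $t$-structure is the standard one, so $H^{1-d}(i^*j_{!*}\hh_{\QQ}[d])$ is already a single object of $\MHM(\{s\})=\MHS_{\QQ}$ whose $\rat$ is $\IH^1_s(\hh_{\QQ})$; there is nothing to degenerate. Second, your closing remark is slightly off: the weight bounds on $B$ do not force $\IH^1_s(\hh_{\QQ})$ itself to be pure Tate (it need not be); the theorem only asserts that the \emph{class} $\sigma_s(\nu)$ is Tate of weight $0$, and this follows solely from the fact that it is the image of $\QQ(0)$ under a morphism of Hodge structures.
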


To prove Theorem~\ref{t.TateSigma}, we are are going to use a general
fact about mixed Hodge modules on reduced separated schemes of finite type
over $\CC$; that is, we use a result from the theory of mixed Hodge
modules in the algebraic case.  If $X$ is such a scheme, we write 
$\MHM(X)$ for the category of mixed Hodge modules on $X$. If $\overline{X}$ is any proper scheme in which $X$ is embedded
as an open subscheme, then
the category $\MHM(X)$ is equivalent to the category
$\MHM(X^{\an})^p_{\overline{X}^{\an}}$.  Here, as
in~\cite{Saito90}*{p.~313} where this statement is proved, $X^{\an}$
denotes the underlying analytic space associated to $X$.

\begin{fact}
\label{l.Tate} Let $X$ be a reduced separated scheme of finite type
over $\CC$, and let $M$ and $N$ be objects in $\Db\MHM(X)$.  Then
there is a natural Hodge structure on the group $\Hom_{\Db\Perv(X)}(\rat
M, \rat N)$ and the image of the natural map
$$
\Hom_{\Db\MHM(X)}(M,N)\stackrel{\rat}{\to} \Hom_{\Db\Perv(X)}(\rat M, \rat N)  
$$
consists of Tate classes of weight $0$.
\end{fact}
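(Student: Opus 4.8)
The plan is to realize $\Hom_{\Db\Perv(X)}(\rat M,\rat N)$ as the underlying $\QQ$-vector space of an explicit mixed Hodge structure and to identify the image of $\rat$ with its weight-$0$ Tate classes. Write $a\colon X\to\Spec\CC$ for the structure morphism. From Saito's formalism of mixed Hodge modules \cite{Saito90} I shall use the following: that $\Db\MHM(X)$ carries an internal Hom functor $\HOM_{\MHM}(-,-)$ — one may take $\HOM_{\MHM}(M,N)=\delta^!(\DD M\boxtimes N)$ for $\delta$ the diagonal of $X$ — which is compatible under $\rat$ with the internal $R\HOM$ of the bounded derived category $\Db_c(X,\QQ)$ of constructible complexes; that $a_*$ commutes with $\rat$; and that $\Db\MHM(\Spec\CC)$ is the bounded derived category $\Db\MHS$ of (graded-polarizable) mixed Hodge structures, equipped with its standard $t$-structure and cohomology functors $H^i$. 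Set
$$
\calE:=a_*\,\HOM_{\MHM}(M,N)\ \in\ \Db\MHS.
$$
Then $\rat\calE=R\Gamma\bigl(X,R\HOM(\rat M,\rat N)\bigr)$, so $H^i(\rat\calE)=\Hom_{\Db_c(X,\QQ)}(\rat M,\rat N[i])$. Since the realization functor $\Db\Perv(X)\to\Db_c(X,\QQ)$ is an equivalence (Beilinson), $H^0(\calE)$ is a mixed Hodge structure whose underlying vector space is $\Hom_{\Db\Perv(X)}(\rat M,\rat N)$; this is the asserted Hodge structure, and it is functorial (contravariant in $M$, covariant in $N$) because $\HOM_{\MHM}$, $a_*$ and $H^0$ are.

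The next step is to locate the source of $\rat$ inside $\Db\MHS$. As $a^*\QQ(0)=\QQ_X$ is the monoidal unit of $\Db\MHM(X)$, the tensor--Hom adjunction together with $a^*\dashv a_*$ yields
$$
\Hom_{\Db\MHM(X)}(M,N)=\Hom_{\Db\MHM(X)}\bigl(\QQ_X,\HOM_{\MHM}(M,N)\bigr)=\Hom_{\Db\MHS}\bigl(\QQ(0),\calE\bigr),
$$
and likewise $\Ext^i_{\MHM(X)}(M,N)=\Hom_{\Db\MHS}(\QQ(0),\calE[i])$. The one point that genuinely needs care is that this chain of isomorphisms is compatible with $\rat$: under it, the map $\Hom_{\Db\MHM(X)}(M,N)\xrightarrow{\rat}\Hom_{\Db_c(X,\QQ)}(\rat M,\rat N)=H^0(\rat\calE)$ must agree with the map sending a morphism $\QQ(0)\to\calE$ in $\Db\MHS$ to its image under $\rat\colon\Db\MHS\to\Db(\QQ\text{-Vect})$, viewed as an element of $\Hom_{\Db(\QQ\text{-Vect})}(\QQ,\rat\calE)=H^0(\rat\calE)$. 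This follows by chasing Saito's compatibilities of $\rat$ with $a_*$, $\HOM$, and the units and counits of the adjunctions; writing out the commuting square is the only laborious part of the proof.

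Granting this, the conclusion is short. Let $\phi\in\Hom_{\Db\MHM(X)}(M,N)$ and let $\widetilde\phi\colon\QQ(0)\to\calE$ be the corresponding morphism in $\Db\MHS$. Applying $\rat$ gives $\rat\widetilde\phi\colon\QQ\to\rat\calE$ in $\Db(\QQ\text{-Vect})$; since $\QQ$ is a field, $\Hom_{\Db(\QQ\text{-Vect})}(\QQ,\rat\calE)=H^0(\rat\calE)$, and by the compatibility of the previous paragraph $\rat(\phi)\in H^0(\rat\calE)=\Hom_{\Db\Perv(X)}(\rat M,\rat N)$ is identified with $H^0(\rat\widetilde\phi)(1)$. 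Now $\rat$ is exact on $\MHS$, hence commutes with $H^0$, so $H^0(\rat\widetilde\phi)=\rat\bigl(H^0(\widetilde\phi)\bigr)$, the underlying linear map of the morphism of Hodge structures $H^0(\widetilde\phi)\colon\QQ(0)\to H^0(\calE)$. Therefore $\rat(\phi)$ is the image of $1$ under such a morphism, which by~\eqref{p.Tate} (taken with $w=0$) is precisely the condition that $\rat(\phi)$ be a Tate class of weight $0$ in $H^0(\calE)$. This proves the Fact; in fact, since $\MHS$ has cohomological dimension $\le 1$, the natural map $\Hom_{\Db\MHS}(\QQ(0),\calE)\twoheadrightarrow\Hom_{\MHS}(\QQ(0),H^0(\calE))$ is surjective, so the image of $\rat$ is precisely the full space of weight-$0$ Tate classes of $H^0(\calE)$. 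The main obstacle is organizational rather than conceptual: it lies in correctly invoking Saito's internal-Hom formalism on $\Db\MHM$ and in checking that the adjunction isomorphisms are compatible with $\rat$; no geometric input beyond that formalism is needed.
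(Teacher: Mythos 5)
Your proof is correct and follows exactly the approach the paper sketches: push the internal Hodge-module Hom forward to a point via the structure morphism, use the equivalence of $\MHM(\Spec\CC)$ with $\MHS$ to read off the mixed Hodge structure on $H^0$, and identify $\Hom_{\Db\MHM(X)}(M,N)$ via the $a^*\dashv a_*$ and tensor--Hom adjunctions as morphisms $\QQ(0)\to\calE$ in $\Db\MHS$. You have simply supplied the details (commutation of $\rat$ with $\HOM$ and $a_*$, Beilinson's realization equivalence, the $\rat$-compatibility of the adjunctions, and the observation about cohomological dimension) that the paper explicitly ``leaves to the reader.''
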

\begin{proof}[Sketch]
Let $\pi:X\to\Spec\CC$ denote the structure morphism.  Then 
\begin{equation}
\label{e.HodgeStructure}
\Hom_{\Db\Perv(X)}(\rat M, \rat N)=\rat H^0\pi_*\HOM (M,N)
\end{equation}
where $\HOM (M,N)$ denotes the internal Hom in $\Db\MHM(X)$.
Since $\MHM(\Spec\CC)$ is equivalent to the category of mixed Hodge
structures with $\rat$ taking a Hodge structure to its underlying
$\QQ$-vector space, the above isomorphism puts a mixed Hodge structure
on $\Hom_{\Db\Perv(X)}(\rat M, \rat N)$.  We leave the rest of the
verification to the reader.  
\end{proof}

\begin{proof}[Proof of Theorem~\ref{t.TateSigma}]
Given a $\nu\in\NF(S,\hh)^{\ad}_{\overline{S}}$, let 
$\overline{\nu}\in\Ext^1_{\MHM(\overline{S})^p}(\QQ[d],j_{!*}\hh_{\QQ}[d])$
denote the unique class such that $j^*\overline{\nu}=\nu$~\eqref{c.iso}.
Let $i:\{s\}\to\overline{S}$ denote the
inclusion morphism.  Then, by Theorem~\ref{t.InIH},
$\sigma_s(\nu)$ is the image of $\overline{\nu}$ in
$\IH^1_s(\hh_{\QQ})=\Ext^1_{\Perv(\{s\})}(\QQ[d],i^*j_{!*}\hh_{\QQ}[d])$ under the
  composition
  \begin{align*}
    \label{e.composition}
    \Hom_{\Db\MHM(\overline{S})}(\QQ[d],(j_{!*}\hh_{\QQ}[d])[1])&\stackrel{i^*}{\to} 
    \Hom_{\Db\MHM(\{s\})}(\QQ[d],i^*(j_{!*}\hh_{\QQ}[d])[1])\\
    &\stackrel{\rat}{\to} 
    \Hom_{\Db\Perv(\{s\})}(\QQ[d],i^*(j_{!*}\hh_{\QQ}[d])[1]).
  \end{align*}
By~\eqref{l.Tate}, the result follows.

\end{proof}

\section{Absolute Hodge cohomology}
\label{AHC}

\begin{para}
\label{p.AbsoluteHodge}
For a separated scheme $Y$ of finite type over $\CC$ let
$a_Y:Y\to\Spec\CC$ denote the structure morphism and let $\QQ(p)$
denote the Tate object in $\MHS=\MHM(\Spec\CC)$.  Let
$\QQ_Y(p):=a_Y^*\QQ(p)$ in $\Db\MHM(Y)$.  (To simplify notation, we
write $\QQ(p)$ for $\QQ_Y(p)$ when no confusion can arise.)  For an
object $M$ in $\Db\MHM(Y)$, set
$$\rH^n_{\calA}(Y,M)=\Hom_{\Db\MHM(Y)}(\QQ,M[n]).
$$
The functor $\rat:\MHM(Y)\to\Perv(Y)$ induces a ``cycle class map''
$$
\rat:\rH^n_{\calA}(Y,M)\to \rH^n(Y,M)
$$
to the hypercohomology of $\rat M$. 
Note that $\Hab^n(Y,\QQ(p))=\rH^n_{\dd}(Y,\QQ(p))$ for $Y$ smooth
and projective and in this case $\rat$ is simply the cycle class map
from Deligne cohomology.  Following Saito~\cite{SaitoHC1}, we will call
$\rH^n_{\calA}(Y,M)$ the \emph{absolute Hodge cohomology} of $M$.
\end{para}

\begin{para}
Suppose $j:S\to \overline{S}$ is the inclusion of a Zariski open
subset of a smooth complex algebraic variety and $s\in
\overline{S}(\CC)$.  Let $i:\{s\}\to \overline{S}$ denote the
inclusion.  If $\hh$ is an admissible variation of mixed Hodge
structure on $S$, we adopt the notation of~\eqref{p.OpenImmersion} and
write
\begin{align*}
\IHab^n(\overline{S},\hh)&=\Hom_{\Db\MHM(\overline{S})}(\QQ[\di_S -n],j_{!*} \hh[\di_S])\\
\IHa{s}^n(\hh)&=\Hom_{\Db\MHS}(\QQ[\di_S-n], i^*j_{!*}\hh[\di_S]).
\end{align*}
\end{para}

We can now amplify Theorem~\ref{t.InIH}.

\begin{proposition}
\label{p.Amp}
  Let $j:S\to\overline{S}$ be an open immersion of smooth complex
  varieties and let $\hh$ be a variation of pure Hodge structure of
  weight $-1$ on $S$.  Then, for $i:\{s\}\to\overline{S}$ the
  inclusion of a closed point, the diagram
\[
\xymatrix{
\NF(S,\hh)^{\ad}\otimes\QQ
\ar[d]_{\sigma_s}  &  \IHab^1(\overline{S},\hh)\ar[l]_{=}\ar[r]^{\rat} & 
\IH^1(\overline{S},\hh)\ar[d]^{i^*} \\
\rH^1_s(\hh)   &   & \ar[ll] \IH^1_s(\hh)                            
}
\]
commutes.
\end{proposition}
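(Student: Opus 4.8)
The plan is to establish commutativity of the diagram in Proposition~\ref{p.Amp} by decomposing it into two squares, each of which is essentially a naturality statement, and then invoking the identifications already set up in \S\ref{s.ANFIC} and \S\ref{AHC}. First I would recall that the left equality $\NF(S,\hh)^{\ad}\otimes\QQ = \IHab^1(\overline S,\hh)$ is precisely Corollary~\ref{c.iso} combined with Corollary~\ref{c.FixThis} and Lemma~\ref{l.MHMVMHS}: the class $\overline\nu \in \IHab^1(\overline S,\hh) = \Ext^1_{\MHM(\overline S)^p}(\QQ[d], j_{!*}\hh_\QQ[d])$ is the unique lift of $\nu$ under the isomorphism $j^*$, and the vertical map $\rat$ on the middle-to-right square is the cycle class map of~\eqref{p.AbsoluteHodge}. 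The content to be checked is then: (a) the right-hand square, relating $\rat$ and $i^*$ on intersection cohomology, commutes; and (b) the outer composite down the left column followed across — i.e. the map $\IHab^1(\overline S,\hh) \to \rH^1_s(\hh)$ obtained as $\sigma_s \circ j^* \circ (\text{inverse of the identification})$ — agrees with $\rat$ followed by $i^*$ into $\IH^1_s(\hh) \to \rH^1_s(\hh)$.

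For step (a), the square
\[
\xymatrix{
\IHab^1(\overline S,\hh) \ar[r]^{\rat} \ar[d]_{i^*} & \IH^1(\overline S,\hh) \ar[d]^{i^*} \\
\IHa{s}^1(\hh) \ar[r]^{\rat} & \IH^1_s(\hh)
}
\]
commutes by the naturality of the functor $\rat:\MHM(\_)\to\Perv(\_)$ with respect to the pullback $i^*$; this is built into Saito's formalism, since $\rat$ commutes with $i^*$ (it is a morphism of the six-functor formalisms). Here I am using $\IHa{s}^1(\hh) = \Hom_{\Db\MHS}(\QQ[d-1], i^*j_{!*}\hh[d])$ from \S\ref{AHC}, and the bottom $\rat$ lands in $\Ext^1_{\Perv(\{s\})}(\QQ[d], i^*j_{!*}\hh_\QQ[d]) = \IH^1_s(\hh)$. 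The remaining identification is that the composite $\IHa{s}^1(\hh) \to \IH^1_s(\hh) \to \rH^1_s(\hh)$ of the cycle class map with the natural map from Lemma~\ref{l.inj} equals $\sigma_s$ applied to $\overline\nu$'s image — but this is exactly what the proof of Theorem~\ref{t.TateSigma} already spells out: $\sigma_s(\nu)$ is the image of $\overline\nu$ under $i^*$ followed by $\rat$, using Theorem~\ref{t.InIH} to see that $\cl_\QQ(\nu)$ and hence $\sigma_s(\nu)$ factors through intersection cohomology.

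So step (b) amounts to the commutativity of
\[
\xymatrix{
\IHab^1(\overline S,\hh) \ar[r]^{j^*} \ar[d] & \NF(S,\hh)^{\ad}\otimes\QQ \ar[d]^{\sigma_s} \\
\IHa{s}^1(\hh) \ar[r] & \rH^1_s(\hh)
}
\]
where the left vertical is $i^*$ into absolute Hodge cohomology at $s$ and the bottom arrow is $\rat$ followed by Lemma~\ref{l.inj}. This is precisely diagram~\eqref{e.NiceDiagram} localized at $s$: the outer square of Theorem~\ref{t.InIH}'s proof says $j^*$ on the top (MHM) row corresponds under $\rat$ to $j^*$ on intersection cohomology, and taking the colimit of restrictions to punctured neighborhoods of $s$ (the definition of $\sigma_s$ in~\eqref{p.Rational}) corresponds to $i^*$ on the MHM side. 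I would assemble these by noting all the relevant functors ($j^*$, $i^*$, $\rat$, the forgetful functor to $\rH^1$) commute pairwise, so pasting the three naturality squares gives the claimed diagram. The main obstacle is purely bookkeeping: making sure the identification $\IHab^1(\overline S,\hh) = \NF(S,\hh)^{\ad}\otimes\QQ$ is compatibly threaded through Corollary~\ref{c.iso}, Lemma~\ref{l.MHMVMHS}, and Corollary~\ref{c.FixThis}, and that the $\rat$-cycle-class maps at the three spaces $\overline S$, $S$, $\{s\}$ are genuinely the restrictions of one another — i.e. that no sign or shift discrepancy is introduced by the conventions $[\di_S - n]$ versus $[\di_S]$ in the definitions of $\IHab$ and $\IHa{s}$. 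Once that compatibility is laid out, the commutativity is formal.
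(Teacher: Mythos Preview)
Your proposal is correct and follows essentially the same route as the paper: the paper's proof is a one-liner invoking diagram~\eqref{e.NiceDiagram}, Corollary~\ref{c.iso}, and the notational identification of~\eqref{p.AbsoluteHodge}, which is exactly what you unpack when you decompose the diagram into naturality squares for $\rat$, $j^*$, and $i^*$. You have spelled out in detail what the paper treats as formal bookkeeping.
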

\begin{proof}
  This is consequence of~\eqref{e.NiceDiagram}, Corollary~\ref{c.iso}
  and the notation of~\eqref{p.AbsoluteHodge} which converts the top
  line of~\eqref{e.NiceDiagram} into absolute Hodge cohomology groups.
\end{proof}

\begin{remark}
  Since the map $\IH^1_s(\hh)\to \rH^1_s(\hh)$ is an injection by Lemma~\ref{l.inj} and the map
  $\sigma_p:\NF(S,\hh)^{\ad}\to \rH^1_s(\hh)$ factors through
  $\IH^1_s(\hh)$, we can write $\sigma_s(\nu)$ for the class of an
  admissible normal function $\nu$ in $\IH^1_s(\hh)$.
\end{remark}

\section{The decomposition of Beilinson-Bernstein-Deligne-Gabber \& Saito}
\label{s.BBDG}

Let $\pi:\xx\to P$ denote a projective morphism between smooth complex
algebraic varieties.  The fundamental theorem alluded to in the title
of this section states that there is a direct sum
decomposition
\begin{equation}
\label{l.BBDG}
\pi_*\QQ[\di_{\xx}]=\oplus H^i(\pi_*\QQ[\di_{\xx}])[-i]
\end{equation}
in $\MHM(P)$~\cite{SaitoIntro}*{Corollary 1.11}.  Moreover, the object
$\pi_*\QQ[\di_{\xx}]$ in $\Db\MHM(P)$ is pure of weight $\di_{\xx}$;
equivalently, the mixed Hodge modules $H^i(\pi_*\QQ[\di_{\xx}])$ occurring
in the decomposition are pure of weight $\di_{\xx} +
i$~\cite{SaitoMHP}*{Theorem 1}.  

\begin{remark}
\label{r.canonical}
The decomposition of~\ref{l.BBDG} is not unique.  However, we can (and
do) require that it induces the identity map on the
$H^i(\pi_*\QQ[\di_{\xx}])$.   In fact, there is a preferred choice
of decomposition~\cite{DeligneDegeneration}*{Remark 1.8}.  To fix
ideas we will choose the preferred one.
\end{remark}

\begin{para}
\label{p.Summands}
The summands appearing in~\eqref{l.BBDG} can be further decomposed by
codimension of strict support~\cite{SaitoIntro}*{3.2.6}: we can write 
\begin{equation}
\label{e.withZ}
H^i(\pi_*\QQ[\di_{\xx}])=\oplus E_{i,Z}(\pi)
\end{equation}
where $Z$ is a closed subscheme of $P$ and $E_{i,Z}(\pi)$ is a Hodge module
supported on $Z$ with no sub or quotient object supported in a proper
subscheme of $Z$.  

Let us set $E_{ij}(\pi)=\oplus_{\codim_P Z=j} E_{i,Z}(\pi)$.  
We then have a decomposition
\begin{equation}
  \label{e.decomp}
  \pi_*\QQ[\di_{\xx}]=\oplus E_{ij}(\pi)[-i].
\end{equation}
We write $E_{i,Z}$ (resp. $E_{ij}$) for $E_{i,Z}(\pi)$ (resp. $E_{ij}(\pi)$) 
when there is no chance of confusion.  We write
$\Pi_{ij}:\pi_*\QQ[\di_{\xx}]\to E_{ij}[-i]$ for the projection map
and $S_{ij}:E_{ij}[-i]\to \pi_*\QQ[\di_{\xx}]$ for the inclusion.  (We suppress the indices and write $\Pi$ and $S$ instead of $\Pi_{ij}$ and $S_{ij}$ when no confusion can arise.)
\end{para}

\begin{observation}
\label{o.BBDG}
Let $p\in P(\CC)$ and form the pullback diagram
\begin{equation}
\label{e.BaseChange}
\xymatrix{
\xx_p\ar[r]^{\iota_p}\ar[d]^{\pi_p} & \xx\ar[d]^{\pi} \\
\{p\}\ar[r]^{\iota}        & P.
}
\end{equation}
The decomposition in~\eqref{l.BBDG} gives decompositions
\begin{align*}
  &\oplus \Pi_{ij}:\rH^n_{\calA}(\xx,\QQ[\di_{\xx}])\stackrel{\cong}{\to}\oplus_{ij} \rH^{n-i}_{\calA}(P,E_{ij});\\
  &\oplus \Pi_{ij}:\rH^n_{\calA}(\xx_p,\QQ[\di_{\xx}])\stackrel{\cong}{\to}\oplus_{ij}\rH^{n-i}_{\calA}(\iota_p^*E_{ij});\\
  &\oplus \Pi_{ij}:\rH^n(\xx,\QQ[\di_{\xx}])\stackrel{\cong}{\to}\oplus_{ij} \rH^{n-i}(P,E_{ij});\\
  &\oplus \Pi_{ij}:\rH^n(\xx_p,\QQ[\di_{\xx}])\stackrel{\cong}{\to}\oplus_{ij}\rH^{n-i}_p(E_{ij}).
\end{align*}

The restriction morphisms on cohomology
$\rH^n(\xx,\QQ[\di_{\xx}])\to \rH^n(\xx_p,\QQ[\di_{\xx}])$ and 
$\rH^n_{\calA}(\xx,\QQ[\di_{\xx}])\to\rH^n_{\calA}(\xx_p,\QQ[\di_{\xx}])$)
are the direct sums of the morphisms
\begin{align*}
\rH^{n-i}(P,E_{ij})&\to\rH^{n-i}_p(E_{ij})\,\text{and}\\
\rH^{n-i}_{\calA}(P,E_{ij})&\to\rH^{n-i}_{\calA}(\iota_p^*E_{ij}).
\end{align*}
Furthermore, the morphism $\rat$ commutes with restriction from $\xx$
to $\xx_p$.  The above assertions follow from proper base
change~\cite{SaitoMHP}*{4.4.3} for the cartesian
diagram~\eqref{e.BaseChange} and the commutativity of $\rat$ with the 
six functors of Grothendieck.
\end{observation}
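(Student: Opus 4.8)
The plan is to obtain every assertion by applying the appropriate cohomology functors to the splitting~\eqref{e.decomp} and transporting it from $\xx$ down to $P$, respectively to $\{p\}$, by means of adjunction and proper base change. First I would set up the transport. Since $a_{\xx}=a_P\circ\pi$ we have $\QQ_{\xx}=a_{\xx}^*\QQ=\pi^*\QQ_P$, and in the square~\eqref{e.BaseChange} likewise $\QQ_{\xx_p}=\iota_p^*\QQ_{\xx}=\pi_p^*\QQ_{\{p\}}$. Hence, for any $M\in\Db\MHM(\xx)$, the adjunctions $\pi^*\dashv\pi_*$ and $\pi_p^*\dashv(\pi_p)_*$ give
\begin{align*}
\rH^n_{\calA}(\xx,M)&=\Hom_{\Db\MHM(\xx)}(\pi^*\QQ_P,M[n])=\Hom_{\Db\MHM(P)}(\QQ_P,\pi_*M[n])=\rH^n_{\calA}(P,\pi_*M),\\
\rH^n_{\calA}(\xx_p,\iota_p^*M)&=\Hom_{\Db\MHM(\{p\})}(\QQ_{\{p\}},(\pi_p)_*\iota_p^*M[n])=\rH^n_{\calA}(\{p\},(\pi_p)_*\iota_p^*M),
\end{align*}
and similarly $\rH^n(\xx,M)=\rH^n(P,\pi_*M)$ and $\rH^n(\xx_p,\iota_p^*M)=\rH^n_p((\pi_p)_*\iota_p^*M)$ for hypercohomology. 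Proper base change~\cite{SaitoMHP}*{4.4.3} for the cartesian square~\eqref{e.BaseChange} — legitimate because $\pi$ is projective, hence proper — provides a canonical isomorphism $(\pi_p)_*\iota_p^*\cong\iota^*\pi_*$, so the two groups over $\{p\}$ become $\rH^n_{\calA}(\{p\},\iota^*\pi_*M)$ and $\rH^n_p(\iota^*\pi_*M)$.

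Now take $M=\QQ_{\xx}[\di_{\xx}]$, so that $\pi_*M=\bigoplus_{ij}E_{ij}[-i]$ by~\eqref{e.decomp} (note $\rH^n_{\calA}(\xx_p,\QQ[\di_{\xx}])=\rH^n_{\calA}(\xx_p,\iota_p^*M)$, and likewise for ordinary hypercohomology). Since $\rH^n_{\calA}(P,-)$, $\rH^n(P,-)$, $\rH^n_p(-)$ and $\iota^*$ all commute with finite direct sums, and $\iota^*(E_{ij}[-i])=(\iota^*E_{ij})[-i]$, the four displayed isomorphisms of the observation drop out immediately; for instance
\[
\rH^n_{\calA}(\xx,\QQ[\di_{\xx}])\ \cong\ \rH^n_{\calA}\bigl(P,\textstyle\bigoplus_{ij}E_{ij}[-i]\bigr)\ \cong\ \bigoplus_{ij}\rH^{n-i}_{\calA}(P,E_{ij}),
\]
and the same with $\iota^*E_{ij}$ over $\{p\}$. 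Tracing the adjunctions, the $ij$-component of each isomorphism is the map induced by the projection $\Pi_{ij}$, with inverse induced by the inclusion $S_{ij}$; this is exactly the content of the symbol ``$\oplus\,\Pi_{ij}$'' in the statement.

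For the compatibility with restriction, recall that the restriction map $\rH^n(\xx,\QQ[\di_{\xx}])\to\rH^n(\xx_p,\QQ[\di_{\xx}])$ and its absolute‑Hodge analogue are induced by $\iota_p^*$. Under the identifications of the first paragraph, $\iota_p^*$ corresponds to the map obtained by applying $\iota^*$ to $\pi_*\QQ[\di_{\xx}]$ on $P$ — equivalently, the map induced by the adjunction unit $\mathrm{id}\to\iota_*\iota^*$ — and then passing to cohomology over $\{p\}$. As $\iota^*$ commutes with finite direct sums and sends $S_{ij}\colon E_{ij}[-i]\to\pi_*\QQ[\di_{\xx}]$ to $\iota^*S_{ij}$, the restriction map is the direct sum of the maps $\rH^{n-i}(P,E_{ij})\to\rH^{n-i}_p(\iota^*E_{ij})$, respectively $\rH^{n-i}_{\calA}(P,E_{ij})\to\rH^{n-i}_{\calA}(\iota^*E_{ij})$. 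Finally, $\rat$ commutes with $\iota^*$, with $\pi_*$, and with the proper base change isomorphism — all instances of the compatibility of $\rat$ with Grothendieck's six operations — so it commutes with restriction from $\xx$ to $\xx_p$ and with the decompositions above.

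The only genuinely non‑formal point, and the one I would actually write out, is the assertion in the previous paragraph that the geometric pullback $\iota_p^*$ corresponds, under the chain of adjunction and proper‑base‑change identifications, to $\iota^*\pi_*$ on $P$. This reduces to the compatibility of the proper base change isomorphism with the units (and counits) of the adjunctions involved — a standard but slightly fussy $2$‑categorical diagram chase in the six‑operation formalism for mixed Hodge modules; everything else in the argument is routine bookkeeping with exact additive functors.
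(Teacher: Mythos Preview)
Your proposal is correct and is precisely the approach the paper itself indicates: the paper gives no proof beyond the one-line remark that everything follows from proper base change for the square~\eqref{e.BaseChange} together with the compatibility of $\rat$ with the six operations, and you have simply spelled out that remark in detail via adjunction and additivity. There is nothing to compare---you have reproduced (and expanded) the paper's own justification.
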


\begin{proposition}
  \label{p.Ei0}
With the notation of~\eqref{e.decomp}, let $j:P^{\sm}\to P$ denote the
largest Zariski open subset of $P$ over which $\pi$ is smooth, and let 
$\pi^{\sm}:\xx^{\sm}\to P^{\sm}$ denote the pull-back of $\pi$ to $P^{\sm}$.  Then   
$$
E_{i0}=j_{!*}((R^{i+ \di_{\xx} -\di_P}\pi^{\sm}_*\QQ)[\di_P]).
$$
\end{proposition}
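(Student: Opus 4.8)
The plan is to identify $E_{i0}$ by its characterization as the part of $H^i(\pi_*\QQ[\di_\xx])$ with no sub- or quotient-object supported on a proper closed subscheme, together with the fact that the restriction of $H^i(\pi_*\QQ[\di_\xx])$ to $P^{\sm}$ is forced by proper (smooth) base change. First I would apply smooth base change to the cartesian square obtained by restricting $\pi$ over $P^{\sm}$: since $\pi^{\sm}$ is smooth and projective, $\pi^{\sm}_*\QQ[\di_\xx]$ is a direct sum of shifted local systems (placed in perverse degrees), and concretely $H^i(\pi^{\sm}_*\QQ[\di_\xx]) = (R^{i + \di_\xx - \di_P}\pi^{\sm}_*\QQ)[\di_P]$ as a pure Hodge module on $P^{\sm}$ (a smooth, i.e. VMHS-type, object). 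On the other hand, decomposing by codimension of strict support as in~\eqref{e.withZ}, the summands $E_{i,Z}$ with $Z \subsetneq P$, i.e. all $E_{ij}$ with $j \geq 1$, restrict to $0$ over $P^{\sm}$ because $P^{\sm}$ misses their supports (one may shrink $P^{\sm}$ if necessary so that it avoids every such $Z$; but in fact the smooth locus already does, since over $P^{\sm}$ the cohomology sheaves $R^k\pi_*\QQ$ are local systems and the decomposition theorem restricted there has only the $j=0$ term). Hence $E_{i0}|_{P^{\sm}} = (R^{i+\di_\xx-\di_P}\pi^{\sm}_*\QQ)[\di_P]$.

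Next I would invoke the defining property of the codimension-$0$ summand: $E_{i0}$ is by construction (see~\eqref{p.Summands}, following~\cite{SaitoIntro}*{3.2.6}) the direct sum of the strict-support summands $E_{i,Z}$ with $Z$ of codimension $0$, i.e. $Z = P$ (recall $P$ is smooth and irreducible here, being a smooth complex algebraic variety — if $P$ is reducible one argues componentwise). A Hodge module with strict support $P$ and with no sub- or quotient-object supported on a proper closed subscheme is, by the structure theory of pure Hodge modules, exactly the intermediate extension from any dense open subset of its generic behaviour; applying this with the dense open $P^{\sm} = P^{\sm}$ gives $E_{i0} = j_{!*}(E_{i0}|_{P^{\sm}})$. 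Combining this with the identification of the restriction from the previous paragraph yields
\[
E_{i0} = j_{!*}\bigl((R^{i+\di_\xx-\di_P}\pi^{\sm}_*\QQ)[\di_P]\bigr),
\]
which is the claim.

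The step requiring the most care is the second one: one must be sure that $E_{i0}$, a priori merely the sum of strict-support summands over codimension-$0$ irreducible subvarieties, really equals $j_{!*}$ of its own restriction to the open set $P^{\sm}$ over which $\pi$ is smooth — in other words that no information is lost in passing to $P^{\sm}$. This is precisely the content of the ``no sub/quotient supported in smaller dimension'' condition combined with the fact that a pure Hodge module on $P$ with strict support $P$ is determined by its restriction to any dense Zariski open subset; the relevant fact is Saito's analogue of the simple-object/intermediate-extension dictionary for pure Hodge modules, exactly as used in Lemma~\ref{IntExt} and its supporting statement~(*) there (one-one correspondence between pure Hodge modules with strict support $Z$ and polarizable VHS on a dense open of $Z$, via $j_{!*}$). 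One should also note that $R^{i+\di_\xx-\di_P}\pi^{\sm}_*\QQ$ underlies an admissible (even pure, polarizable) VMHS on $P^{\sm}$, so that the intermediate extension on the level of mixed Hodge modules is defined and matches the perverse-sheaf-level intermediate extension under $\rat$; this is what makes the displayed formula meaningful in $\MHM(P)$ and not merely in $\Perv(P)$.
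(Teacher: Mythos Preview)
Your proof is correct and takes essentially the same approach as the paper: identify $j^*E_{i0}$ via smooth base change, then use the strict-support property of $E_{i0}$ to conclude it equals the intermediate extension of its restriction. The paper phrases the second step via semisimplicity (purity of $E_{i0}$ implies $F:=j_{!*}(j^*E_{i0})$ is a direct summand, and any complement would be supported on a proper subscheme hence vanish by the definition of $E_{i0}$), whereas you invoke directly the characterization of strict-support modules as intermediate extensions (the statement~(*) in the proof of Lemma~\ref{IntExt}); these are equivalent formulations.
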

\begin{proof}
Set $F=j_{!*}((R^{i+ \di_{\xx} -\di_P}\pi^{\sm}_*\QQ)[\di_P])$.
  Clearly $j^*E_{i0}=(R^{i+ \di_{\xx} -\di_P}\pi^{\sm}_*\QQ)[\di_P]$. Since
  $E_{i0}$ is pure, it follows that $E_{i0}$ contains 
$F$ 
as a direct factor.  Since any
  complement of $F$ in
  $E_{i0}$ would have to be supported on a proper subscheme of $P$,
  the proposition follows from the definition of $E_{i0}$.
\end{proof}

\begin{corollary}
\label{c.decomp}
With the notation as in~\eqref{p.Ei0}, set $\hh_s:=R^s\pi^{\sm}_*\QQ$, a
variation of Hodge structures of weight $s$ on $P^{\sm}$.  Then
\begin{enumerate}
\item The group $\IH^r(P,\hh_s)$ (resp. $\IHab^r(P,\hh_s)$) is a direct factor in $\rH^{r+s}(\xx,\QQ)$ (resp. $\Hab^{r+s}(\xx,\QQ)$);
\item for $p\in P$, $\IH^r_p(\hh_s)$ (resp. $\IHa{p}^r(\hh_s)$) is a direct factor in
  $\rH^{r+s}(\xx_p,\QQ)$ (resp. $\Hab^{r+s}(\xx_p,\QQ)$).
\item Moreover the morphism $\rat$ is compatible with the morphisms $\Pi$ and $S$ inducing the direct factors.
\end{enumerate}
\end{corollary}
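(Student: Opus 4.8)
The plan is to derive Corollary~\ref{c.decomp} formally from Proposition~\ref{p.Ei0} and Observation~\ref{o.BBDG}, the only real work being to match up the various degree shifts.

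First I would fix integers $r$ and $s$ and set $i=s+\di_P-\di_{\xx}$. With this choice Proposition~\ref{p.Ei0} identifies the relevant piece of the decomposition:
\[
E_{i0}=j_{!*}\bigl(\hh_s[\di_P]\bigr),
\]
where $j\colon P^{\sm}\to P$ is the open locus over which $\pi$ is smooth. Next I would unwind the definitions of the groups in play. From~\eqref{p.AbsoluteHodge} and the notation of~\S\ref{AHC},
\[
\Hab^{r+s}(\xx,\QQ)=\rH^{r+s-\di_{\xx}}_{\calA}(\xx,\QQ[\di_{\xx}]),\qquad
\IHab^{r}(P,\hh_s)=\rH^{r-\di_P}_{\calA}(P,j_{!*}\hh_s[\di_P])=\rH^{r-\di_P}_{\calA}(P,E_{i0}),
\]
and the analogous identities hold for ordinary (hyper)cohomology with $\rat$ in place of $\calA$, and, at a point $p$, for $\IHa{p}^{r}(\hh_s)$ and $\IH^r_p(\hh_s)$. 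Putting $n=r+s-\di_{\xx}$ one checks the bookkeeping identity $n-i=r-\di_P$, so the summand with index pair $(i,0)$ occurring in the decompositions of Observation~\ref{o.BBDG} is precisely $\IHab^{r}(P,\hh_s)$ (respectively $\IH^r(P,\hh_s)$).

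Given this, (1) is immediate: the first displayed isomorphism of Observation~\ref{o.BBDG}, $\oplus\Pi_{ij}\colon\rH^{n}_{\calA}(\xx,\QQ[\di_{\xx}])\stackrel{\cong}{\to}\oplus_{ij}\rH^{n-i}_{\calA}(P,E_{ij})$, exhibits $\IHab^{r}(P,\hh_s)$ as a direct factor of $\Hab^{r+s}(\xx,\QQ)$ with projection $\Pi_{i0}$ and section $S_{i0}$, and the third displayed isomorphism does the same for $\IH^r(P,\hh_s)$ inside $\rH^{r+s}(\xx,\QQ)$. For (2) I would repeat the argument over the base-change square~\eqref{e.BaseChange}: proper base change gives $\iota^*E_{i0}=\iota^*j_{!*}\hh_s[\di_P]$, hence $\rH^{r-\di_P}_{\calA}(\iota^*E_{i0})=\IHa{p}^{r}(\hh_s)$ and $\rH^{r-\di_P}_p(E_{i0})=\IH^r_p(\hh_s)$, and the second and fourth isomorphisms of Observation~\ref{o.BBDG} then realize these as direct factors of $\Hab^{r+s}(\xx_p,\QQ)$ and $\rH^{r+s}(\xx_p,\QQ)$. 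Part (3) needs no further work: Observation~\ref{o.BBDG} already records that $\rat$ is compatible with the projections $\Pi$, the inclusions $S$, and with restriction from $\xx$ to $\xx_p$, so the identifications of direct factors just built automatically intertwine the cycle-class maps.

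I do not anticipate a genuine obstacle. The corollary is a formal consequence of the two results cited, and the only point requiring care is the index bookkeeping---in particular, checking that for each fixed $s$ the shift $i=s+\di_P-\di_{\xx}$ is the unique index for which $E_{i0}$ involves $\hh_s$, and that this summand is the codimension-zero strict-support part, which is exactly the content of Proposition~\ref{p.Ei0}.
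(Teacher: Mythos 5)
Your proof is correct and follows the same route the paper takes: the paper's own proof is the one-line remark that the corollary ``follows directly from Observation~\ref{o.BBDG},'' and what you have done is spell out the degree-shift bookkeeping (identifying the relevant summand as $E_{i0}$ via Proposition~\ref{p.Ei0} with $i=s+\di_P-\di_{\xx}$, and checking $n-i=r-\di_P$) that the paper leaves implicit. The only nitpick is that invoking proper base change for $\iota^*E_{i0}=\iota^*j_{!*}\hh_s[\di_P]$ is unnecessary---this is just $\iota^*$ applied to the identity $E_{i0}=j_{!*}\hh_s[\di_P]$; proper base change has already been used inside Observation~\ref{o.BBDG}, which is all you need.
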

\begin{proof}
This follows from directly from Observation~\ref{o.BBDG}.
\end{proof}

\begin{para}
\label{p.hhij}
Using the notation of~\eqref{e.withZ}, write $Z_{ij}(\pi)=\supp
E_{ij}(\pi)$ (and write $Z_{ij}$ for $Z_{ij}(\pi)$).  Then $Z_{ij}$ is
a reduced closed subscheme of $P$ of codimension $j$.  There exists an
open dense subscheme $g_{ij}:U_{ij}\hookrightarrow Z_{ij}$ and a
variation of pure Hodge structures $\hh_{ij}$ on $U_{ij}$ such that
$E_{ij}=(g_{ij})_{!*}\hh_{ij}[\di_P-j]$.  Clearly we can take
$U_{i0}=P^{\sm}$ and
$$\hh_{i0}=\hh_{i+\di_{\xx} -\di_P}.$$
\end{para}


\subsection*{Hodge classes and normal functions}  
The variation $\hh_{2k-1}(k)$ on $P^{sm}$ is an admissible VMHS of weight $-1$
with respect to $P$ for each integer $k$. Then by Corollary~\ref{c.iso} 
$$
\IHab^1(P,\hh_{2k-1}(k)) = \NF(P^{\sm}, \hh_{2k-1}(k))^{\ad}_P.
$$
By Corollary~\ref{c.decomp}, the above is a
direct factor in $\Hab^{2k}(\xx,\QQ(k))$.  Therefore, the composition 
$$
N_k:\Hab^{2k}(\xx,\QQ(k))\overset\Pi\to \IHab^1(P,\hh_{2k-1}(k))=\NF(P^{\sm}, \hh_{2k-1}(k))^{\ad}_P
$$
associates an admissible $\QQ$-normal function to every absolute Hodge cohomology class.



For $k\in\ZZ$, write $\rH^{2k}(\xx,\QQ(k))_{\prim}$ for the kernel of
the composition
$$
\rH^{2k}(\xx,\QQ(k))\to\rH^{2k}(\xx^{\sm},\QQ(k))\to
\rH^{0}(P^{\sm},R^{2k}\pi_*\QQ(k)).
$$
In other words, $\rH^{2k}(\xx,\QQ(k))_{\prim}$ consists of those classes $\alpha$
such that $\alpha_{|\xx_p}=0$ for $p\in\xx(\CC)$ a point over which
$\pi$ is smooth.  Write
$$\rH^{2k}_{\calA}(\xx,\QQ(k))_{\prim}:=\rat^{-1}\rH^{2k}(\xx,\QQ(k))_{\prim}.$$

Note that, for $p\in P^{\sm}(\CC)$, the kernel of the map 
$$\rat: \rH^{2k}_{\calA}(\xx_p,\QQ(k))\to \rH^{2k}(\xx_p,\QQ(k))
$$ 
consists of the intermediate Jacobian
$J(\hh_{2k-1}(k))_p=\Ext^1_{\MHS}(\QQ,\rH^{2k-1}(\xx_p,\QQ(k)))$.  It
follows that, for $\alpha\in \rH^{2k}_{\calA}(\xx,\QQ(k))_{\prim}$ and
$p\in P^{\sm}(\CC)$, $\alpha_{|\xx_p}\in J(\hh_{2k-1}(k))_p$.

\begin{fact}
\label{f.NF}
For $\alpha\in\rH^{2k}_{\calA}(\xx,\QQ(k))_{\prim}$, 
$
N_k(\alpha)(p)=\alpha_{|\xx_p}.
$
\end{fact}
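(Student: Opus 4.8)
Here is how I would prove Fact~\ref{f.NF}.

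\emph{Setup.} The plan is to unwind both sides into the summand decomposition \eqref{e.decomp} and to match them via Observation~\ref{o.BBDG}. Fix the integer $i_0:=2k-1-\di_{\xx}+\di_P$. By~\eqref{p.hhij} we have $\hh_{i_0,0}=\hh_{2k-1}$, and by Proposition~\ref{p.Ei0}, $E_{i_0,0}=j_{!*}(\hh_{2k-1}[\di_P])$, so $E_{i_0,0}(k)=j_{!*}(\hh_{2k-1}(k)[\di_P])$ and the $(i_0,0)$-summand of the decomposition of $\Hab^{2k}(\xx,\QQ(k))$ induced by~\eqref{e.decomp} is precisely $\IHab^1(P,\hh_{2k-1}(k))$; thus $N_k=\Pi_{i_0,0}$. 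Fix $p\in P^{\sm}(\CC)$, write $\iota:\{p\}\hookrightarrow P$ for the inclusion, and form the cartesian square~\eqref{e.BaseChange}.

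\emph{One side: restriction computes the value.} By Observation~\ref{o.BBDG}, the restriction $\Hab^{2k}(\xx,\QQ(k))\to\Hab^{2k}(\xx_p,\QQ(k))$ is the direct sum over all $(i,j)$ of the maps $\Hab^{2k-\di_{\xx}-i}(P,E_{ij}(k))\to\Hab^{2k-\di_{\xx}-i}(\{p\},\iota^*E_{ij}(k))$, and $\rat$ is compatible with this decomposition. Taking the $(i_0,0)$-component and recalling $\Pi_{i_0,0}=N_k$, we get that $\Pi_{i_0,0}(\alpha_{|\xx_p})$ is the image of $N_k(\alpha)$ under $\IHab^1(P,\hh_{2k-1}(k))\to\Hab^{2k-\di_{\xx}-i_0}(\{p\},\iota^*E_{i_0,0}(k))$. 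Since $p\in P^{\sm}$, over which $E_{i_0,0}(k)$ restricts to the smooth mixed Hodge module $\hh_{2k-1}(k)[\di_P]$, the target of this map is $\Hom_{\Db\MHS}(\QQ,\hh_{2k-1,p}(k)[1])=\Ext^1_{\MHS}(\QQ,\rH^{2k-1}(\xx_p,\QQ(k)))=J(\hh_{2k-1}(k))_p$ (here $\hh_{2k-1,p}=\rH^{2k-1}(\xx_p,\QQ)$ is the fibre of $\hh_{2k-1}$ at $p$), and this map is the evaluation $\nu\mapsto\nu(p)$ of admissible $\QQ$-normal functions at the interior point $p$: under the equivalence $\VMHS_{\QQ}(P^{\sm})^{\ad}_P\simeq\MHM(P^{\sm})^{ps}_P$ of Lemma~\ref{l.MHMVMHS} the operation $i_p^*[-\di_P]$ on mixed Hodge modules corresponds to passing to the fibre at $p$ of a variation, and $\nu(p)\in J(\hh_{2k-1}(k))_p$ is by construction the class in $\Ext^1_{\MHS}(\QQ,\hh_{2k-1,p}(k))$ of the fibre at $p$ of the extension of variations representing $\nu$ (cf. Corollary~\ref{c.iso}, \eqref{e.NiceDiagram}). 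Hence $\Pi_{i_0,0}(\alpha_{|\xx_p})=N_k(\alpha)(p)$.

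\emph{Other side: $\alpha_{|\xx_p}$ sits in the $(i_0,0)$-summand.} Over $P^{\sm}$, where $\pi$ is smooth and proper, the decomposition~\eqref{l.BBDG} reduces to the Deligne decomposition $\bigoplus_m (R^m\pi^{\sm}_*\QQ)[\di_{\xx}-m]$, whose terms all have strict support $P^{\sm}$; hence the $E_{ij}$ with $j>0$ are supported on $P\setminus P^{\sm}$, so $\iota^*E_{ij}=0$ for $j>0$, while $\iota^*E_{i0}(k)=\hh_{i+\di_{\xx}-\di_P,p}(k)[\di_P]$ by Proposition~\ref{p.Ei0}. Substituting this into the decomposition of Observation~\ref{o.BBDG} and using $\Ext^n_{\MHS}=0$ for $n\notin\{0,1\}$ collapses $\Hab^{2k}(\xx_p,\QQ(k))$ to the two summands $\Hom_{\MHS}(\QQ,\rH^{2k}(\xx_p,\QQ(k)))$ (the term with $i+\di_{\xx}-\di_P=2k$) and $\Ext^1_{\MHS}(\QQ,\rH^{2k-1}(\xx_p,\QQ(k)))=J(\hh_{2k-1}(k))_p$ (the term $i=i_0$); this is the split form of the absolute Hodge cohomology sequence of $\xx_p$, the splitting being the one induced by the decomposition theorem. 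Since $\rat$ respects this splitting, vanishes on the second summand, and carries the first isomorphically onto the subspace of Hodge classes in $\rH^{2k}(\xx_p,\QQ(k))$, we get $\ker\bigl(\rat:\Hab^{2k}(\xx_p,\QQ(k))\to\rH^{2k}(\xx_p,\QQ(k))\bigr)=J(\hh_{2k-1}(k))_p$, which is exactly the $(i_0,0)$-summand. As $\alpha$ is primitive, $\rat(\alpha_{|\xx_p})=(\rat\,\alpha)_{|\xx_p}=0$, so $\alpha_{|\xx_p}$ lies in the $(i_0,0)$-summand and $\Pi_{i_0,0}(\alpha_{|\xx_p})=\alpha_{|\xx_p}$. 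Combining with the previous step gives $N_k(\alpha)(p)=\alpha_{|\xx_p}$.

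\emph{Main obstacle.} The only point that is not pure bookkeeping is the identification, in the second step, of the decomposition-theoretic restriction $\IHab^1(P,\hh_{2k-1}(k))\to J(\hh_{2k-1}(k))_p$ with the honest evaluation $\nu\mapsto\nu(p)$ of the admissible normal function $\nu=N_k(\alpha)$ at the interior point $p$; this rests on the compatibility of Saito's equivalence $\VMHS^{\ad}\simeq\MHM^{ps}$ (Lemma~\ref{l.MHMVMHS}, \cite{Saito90}*{Theorem 3.27}) with restriction to a point, together with the identification $\Ext^1_{\MHS}(\QQ,\hh_{2k-1,p}(k))=J(\hh_{2k-1}(k))_p$. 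Everything else follows from Observation~\ref{o.BBDG}, Proposition~\ref{p.Ei0}, the fact that the $E_{ij}$ with $j>0$ are supported away from $P^{\sm}$, and the vanishing $\Ext^{\geq 2}_{\MHS}=0$.
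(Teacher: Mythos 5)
Your proof is correct and follows the same route as the paper's (very terse) sketch, which simply invokes~\eqref{p.normal} and Remark~\ref{r.canonical}: you use Observation~\ref{o.BBDG} (which rests on the canonicality of the decomposition, i.e., Remark~\ref{r.canonical}) to commute restriction to a fibre past the $\Pi_{ij}$, and the VMHS/MHM dictionary of~\eqref{p.normal} and Lemma~\ref{l.MHMVMHS} to identify the resulting map into $\Ext^1_{\MHS}(\QQ,\rH^{2k-1}(\xx_p,\QQ(k)))$ with evaluation of the admissible normal function at $p$. You merely supply the bookkeeping (the index shift $i_0=2k-1-\di_{\xx}+\di_P$, the vanishing $\iota^*E_{ij}=0$ for $j>0$ over $P^{\sm}$, and the use of primitivity to place $\alpha_{|\xx_p}$ in $\ker\rat$) that the authors leave to the reader.
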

\begin{proof}[Sketch]
  This is not hard to see using~\eqref{p.normal} and
  Remark~\ref{r.canonical}, i.e., the fact that ~\eqref{l.BBDG}
  induces the identity on cohomology.
\end{proof}

\begin{proposition}
\label{p.NoSing}
Let $Z_k:=\ker (\rat: \rH^{2k}_{\calA}(\xx,\QQ(k))\to\rH^{2k}(\xx,\QQ(k))$.  
Then, for each $p\in P$ and each $\alpha\in Z_k$, $\sigma_p(N_k(\alpha))=0$.
\end{proposition}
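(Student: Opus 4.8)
\textbf{Proof plan for Proposition~\ref{p.NoSing}.}
The plan is to show that if $\alpha$ maps to zero under the cycle class map $\rat$ on $\xx$, then its image in every local intersection cohomology group vanishes, by tracking $\alpha$ through the decomposition theorem and invoking the compatibility of $\rat$ with restriction. First I would use Observation~\ref{o.BBDG} and Corollary~\ref{c.decomp}: the projection $\Pi$ identifies $\IHab^1(P,\hh_{2k-1}(k))$ (which by the discussion following~\eqref{p.hhij} equals $\NF(P^{\sm},\hh_{2k-1}(k))^{\ad}_P\otimes\QQ$, using Corollary~\ref{c.iso}) as a direct factor of $\Hab^{2k}(\xx,\QQ(k))$, compatibly with $\rat$ and with restriction to a point $p\in P$. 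Concretely, $N_k(\alpha)=\Pi_{1,0}(\alpha)\in\IHab^1(P,\hh_{2k-1}(k))$, and by Proposition~\ref{p.Amp} the singularity $\sigma_p(N_k(\alpha))$ is computed by applying $\rat$ followed by $i^*$ (restriction to $\{p\}$) to the corresponding class $\overline{\nu}\in\IHab^1(P,\hh_{2k-1}(k))$, landing in $\IH^1_p(\hh_{2k-1}(k))\hookrightarrow\rH^1_p(\hh_{2k-1}(k))$.

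The key point is then the following chain: by Corollary~\ref{c.decomp}(iii) and Observation~\ref{o.BBDG}, the map $\rat$ commutes with the projector $\Pi$ and with restriction from $\xx$ to $\xx_p$; so the image of $\alpha$ under
$$
\Hab^{2k}(\xx,\QQ(k))\xrightarrow{\ \Pi\ }\IHab^1(P,\hh_{2k-1}(k))\xrightarrow{\ i^*\rat\ }\IH^1_p(\hh_{2k-1}(k))\hookrightarrow\rH^1_p(\hh_{2k-1}(k))
$$
agrees with the image of $\alpha$ under
$$
\Hab^{2k}(\xx,\QQ(k))\xrightarrow{\ \rat\ }\rH^{2k}(\xx,\QQ(k))\xrightarrow{\ \text{restr.}\ }\rH^{2k}(\xx_p,\QQ(k))\xrightarrow{\ \Pi_p\ }\rH^1_p(\hh_{2k-1}(k)),
$$
where $\Pi_p$ is the component of the decomposition of $\rH^{2k}(\xx_p)$ picking out the $E_{1,0}$-summand (this is exactly the content of the displayed compatibilities in Observation~\ref{o.BBDG}). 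But $\alpha\in Z_k$ means $\rat(\alpha)=0$ in $\rH^{2k}(\xx,\QQ(k))$, so the second composition is zero. Hence $\sigma_p(N_k(\alpha))$, which by Proposition~\ref{p.Amp} is exactly the image of $\alpha$ under the first composition (identified via $N_k=\Pi$ and $\IHab^1=\NF^{\ad}\otimes\QQ$), vanishes in $\rH^1_p(\hh_{2k-1}(k))$, and therefore in $\IH^1_p(\hh_{2k-1}(k))$ by the injectivity in Lemma~\ref{l.inj}.

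The main obstacle, and the step requiring the most care, is assembling the commuting square relating $\sigma_p\circ N_k$ on the one hand and $\Pi_p\circ(\text{restriction})\circ\rat$ on the other: one must check that the projector $\Pi=\Pi_{1,0}$ used to define $N_k$ on absolute Hodge cohomology of $\xx$ is genuinely compatible, under the identification of $\IHab^1(P,\hh_{2k-1}(k))$ with a summand of $\Hab^{2k}(\xx)$ and of $\IH^1_p$ with a summand of $\rH^{2k}(\xx_p)$, with the projector used on the Betti side at the fiber $\xx_p$, and that $\rat$ intertwines all of these. This is precisely the package recorded in Observation~\ref{o.BBDG} together with Corollary~\ref{c.decomp}(iii) and Proposition~\ref{p.Amp}; once those are in hand the conclusion is immediate, so the proof is short and the work is really in quoting the right compatibilities rather than any new computation.
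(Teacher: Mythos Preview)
Your argument is correct and is essentially the paper's own proof: both reduce to the commutativity of a diagram asserting that $\rat$ intertwines the projector $\Pi$ (Corollary~\ref{c.decomp}(iii)) and that $\sigma_p$ equals $\rat$ followed by restriction to $p$ (Proposition~\ref{p.Amp}); the paper simply records this as a single commuting rectangle with rows $\Hab^{2k}(\xx)\to\IHab^1(P)=\NF^{\ad}$ and $\rH^{2k}(\xx)\to\IH^1(P)\to\IH^1_p$, rather than detouring through $\rH^{2k}(\xx_p)$ as you do. One cosmetic slip: the relevant projector is not $\Pi_{1,0}$ but $\Pi_{i,0}$ with $i=2k-1-\di_{\xx}+\di_P$, since $\hh_{i0}=\hh_{i+\di_{\xx}-\di_P}$ (cf.~\ref{p.hhij}).
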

\begin{proof}
  This follows from the commutativity of the diagram
$$
\xymatrix{
\rH^{2k}_{\calA}(\xx,\QQ(k))\ar[r]^{\Pi}\ar[d]^{\rat} & \IHab^1(P,\hh_{2k-1}(k))
\ar[d]^{\rat} 
\ar[r]^{=} & \NF(P^{sm},\hh_{2k-1}(k))^{\ad}_P\ar[d]^{\sigma_p} \\
\rH^{2k}(\xx,\QQ(k))\ar[r]^{\Pi} & \IH^{1}(P,\hh_{2k-1}(k))\ar[r] 
& \IH^1_p(\hh_{2k-1}(k)).\\
}
$$
\end{proof}

\begin{para}
  Now suppose that $\xx$ is projective.  Then the image of
  $\rat:\rH^{2k}_{\calA}(\xx,\QQ(k))\to \rH^{2k}(\xx,\QQ(k))$ is exactly
  the subgroup $\Hodge^k(\xx):=\rH^{k,k}(\xx)\cap \rH^{2k}(\xx,\QQ(k))$ of
  Hodge classes in $\xx$.  By Proposition~\ref{p.NoSing}, if
  $\alpha_1,\alpha_2$ are two classes in $\rH^{2k}_{\calA}(\xx,\QQ(k))$
  such that $\rat(\alpha_1)=\rat(\alpha_2)\in\rH^{2k}(\xx,\QQ(k))$,
  then $\sigma_p(\alpha_1)=\sigma_p(\alpha_2)$ for each $p\in P$.
In other words, the group homomorphism $\sigma_p:\rH^{2k}(\xx,\QQ(k))\to \IH^1_p(\hh_{2k-1}(k))$ 
factors through the quotient $\Hodge^k(\xx)$ of $\rH^{2k}_{\calA}(\xx,\QQ(k))$.  We, thus, obtain a group homomorphism
$$
\sigma_p:\Hodge^k(\xx)\to \IH^1_p(\hh_{2k-1}(k))
$$
for every $p\in P$.
In fact, it is easy to see that this group homomorphism is simply the restriction to $\Hodge^k(\xx)$ of the composition of the arrows in the lower half of the diagram used in the proof of Proposition~\ref{p.NoSing}.
\end{para}

\section{Vanishing}
\label{s.Vanishing}

We begin this section by formalizing some notation.

\begin{para}
\label{e.ScriptX}
Let $X$ be a smooth projective complex variety of dimension $2n$ with
$n$ an integer and let $\calL$ be a very ample line bundle on $X$.
Set $P:=|\calL|$ and let 
\begin{equation*}
  \xx:=\{(x,f)\in X\times P\, | f(x)=0\}.
\end{equation*}
We call $\xx$ the \emph{incidence variety associated to the pair
  $(X,\calL)$}.  Let $\pr:\xx\to X$ denote the first projection and $\pi:\xx\to
P$ denote the second projection.
Let $d:=\di_P$.  Then $\xx$ is smooth of dimension $r:=2n+d-1$ because
$\pr$ is a Zariski local fibration with fiber $\PP^{d-1}$.  The map
$\pi:\xx\to P$ is smooth over the complement of the dual variety $X^{\vee}\subset P$.
\end{para}

We now state an analogue of the Weak Lefschetz theorem for the map $\pi$.

\begin{theorem}[Perverse Weak Lefschetz]
\label{t.wl}
  Let $\pi:\xx\to P$ be as in~\eqref{e.ScriptX}, and let
  $E_{ij}=E_{ij}(\pi)$ be as in~\eqref{e.decomp}.  Then
\begin{enumerate}
\item $E_{ij}=0$ unless $i=0$ or $j=0$.
\item $E_{i0}=\rH^i(X,\QQ[2n-1])\otimes\QQ[d]$ for $i<0$.
\end{enumerate}
\end{theorem}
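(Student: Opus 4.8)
The key geometric input is that the first projection $\pr:\xx\to X$ exhibits $\xx$ as a projective bundle: $\xx = \PP(\calL^{\vee})$ over $X$, or more concretely a Zariski-locally-trivial $\PP^{d-1}$-bundle, since the fiber of $\pr$ over $x\in X$ is the hyperplane $\{f\in P : f(x)=0\}$ in $P=|\calL|$. The plan is to compute the full decomposition of $\pi_*\QQ[\di_\xx]$ by playing this projective bundle structure against the relative ampleness of the tautological class coming from $P$, and extracting the ``negative'' and ``low-codimension'' pieces.

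First I would set up the relative hard Lefschetz theorem for $\pi:\xx\to P$. Let $\eta\in \rH^2(\xx,\QQ)$ be the first Chern class of the restriction to $\xx$ of $\calO_P(1)$; since $\xx\hookrightarrow X\times P$ and $\calO_P(1)$ is ample on $P$ while $\xx\to X$ is finite onto... no — rather, $\eta$ is $\pi$-ample because $\xx\to P$ is (generically finite? no, the fibers $\xx_f = V(f)$ are hypersurfaces in $X$, of dimension $2n-1$, so $\pi$ is far from finite). The correct ample class for relative hard Lefschetz is instead the pullback $\xi := \pr^*c_1(\calL)\in\rH^2(\xx,\QQ)$, which restricts on each fiber $\xx_f = V(f)$ to $c_1(\calL|_{V(f)})$, an ample class. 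By the relative hard Lefschetz theorem in the category of mixed Hodge modules (\cite{SaitoMHP}, or Saito's decomposition theorem with its hard Lefschetz refinement), cup product with $\xi^i$ gives isomorphisms $H^{-i}(\pi_*\QQ[\di_\xx]) \xrightarrow{\ \sim\ } H^{i}(\pi_*\QQ[\di_\xx])(i)$ for $i>0$. This already forces a symmetry $E_{ij}\cong E_{-i,j}(i)$, so part (ii) for $i<0$ is equivalent to the corresponding statement for $i>0$, and the heart of the matter is the structure of the $H^i$ for $i\ge 0$, i.e. computing $\pi_*\QQ[\di_\xx]$ in nonpositive perverse degrees.

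The main computation: I would compare $\pi_*\QQ$ with $\pr_*\QQ$ via the projective bundle formula. Because $\pr:\xx\to X$ is a $\PP^{d-1}$-bundle, $\pr_*\QQ_\xx = \bigoplus_{k=0}^{d-1}\QQ_X(-k)[-2k]$ in $\Db\MHM(X)$. Now push forward the constant sheaf on $\xx$ to $P$ and analyze the Leray/perverse filtration; the point is that for a point $f\in P$, the fiber $\xx_f = V(f)$ is a hypersurface in $X$, and classical weak Lefschetz says $\rH^m(V(f),\QQ)\to\rH^m(X,\QQ)$... wait, the map goes $\rH^m(X)\to\rH^m(V(f))$, an isomorphism for $m<2n-1$ and injective for $m=2n-1$ (when $V(f)$ is smooth). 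The upshot, which is the real content of ``perverse weak Lefschetz,'' is that the part of $\pi_*\QQ[\di_\xx]$ in perverse degrees $<0$ — the $E_{i0}$ with $i<0$ together with all $E_{ij}$ with $j>0$ — is entirely ``constant along $P$,'' i.e. equals a sum of shifted constant Hodge modules $\QQ_P[d]$ (Tate-twisted), with no contribution supported on proper subvarieties of $P$. Concretely: the classes in $\rH^m(\xx_f)$ for $m < 2n-1$ are pulled back from $X$, hence are monodromy-invariant and extend to global sections over $P$ of the local systems $R^m\pi^{\sm}_*\QQ$, and one uses the projective bundle formula to see these local systems are constant. The vanishing $E_{ij}=0$ for $i<0,\,j>0$ then follows because a summand $E_{ij}$ with $j>0$ is supported on a proper $Z\subsetneq P$ and has no sub/quotient supported in smaller dimension, which is incompatible with $E_{ij}$ being (via the above) a summand of a constant sheaf. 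Part (ii) identifies the remaining low-degree piece $E_{i0}$ with $\rH^i(X,\QQ[2n-1])\otimes\QQ[d]$ — the $i$-th cohomology sheaf being constant with stalk $\rH^{i + 2n-1}(X)$ (shifted appropriately), which one reads off from $\pr_*\QQ = \bigoplus_k \QQ_X(-k)[-2k]$ after passing to $P$, since in perverse degree $i<0$ only the ``$X$-part'' survives and the $\PP^{d-1}$-bundle structure contributes the full cohomology of $X$ in the relevant range.

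The hard part will be the step asserting that the negative perverse-degree piece of $\pi_*\QQ[\di_\xx]$ is constant along $P$ — that is, carefully controlling the interaction between the perverse (Leray) filtration for $\pi$ and the projective bundle filtration for $\pr$, and in particular ruling out summands supported on $X^{\vee}$ or its strata in the range $i<0$. I expect to handle this by: (a) restricting to the smooth locus $P^{\sm} = P\setminus X^{\vee}$ and using ordinary weak Lefschetz for smooth hyperplane sections to show the local systems $R^m\pi^{\sm}_*\QQ$ are constant for $m< 2n-1$ (they are pulled back from $X$ via a section-like construction, or simply because $\pi^{\sm}$ factors appropriately through the bundle $\pr$); (b) invoking Proposition~\ref{p.Ei0} to identify $E_{i0}$ with $j_{!*}$ of a constant local system, hence itself $\QQ_P[d]$-like; and (c) using a weight or support argument — the summand $\pi_*\QQ[\di_\xx]$ is pure, its restriction to $P^{\sm}$ is a sum of constant VHS in the relevant degrees, and by \cite{BBD}*{Corollary 1.4.25} (as invoked in the proof of Lemma~\ref{IntExt}) a pure Hodge module with no sub/quotient supported on $X^\vee$ and with constant restriction to $P^{\sm}$ has no summand supported on $X^\vee$ at all. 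Combining (a)–(c) across the range $i<0$ yields both (i) and (ii).
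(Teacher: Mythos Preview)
Your plan has a genuine gap at step (c). Restricting $H^i(\pi_*\QQ[\di_\xx])$ to $P^{\sm}$ only sees $E_{i0}$: by definition the summands $E_{ij}$ with $j>0$ are supported on proper closed subsets of $P$, so their restriction to $P^{\sm}$ vanishes identically. Thus knowing that $j^*H^i(\pi_*\QQ[\di_\xx])$ is a constant local system (your step (a)) and that $E_{i0}=j_{!*}$ of that constant system (your step (b)) says nothing at all about whether $E_{ij}$ is zero for $j>0$. The sentence ``a pure Hodge module with no sub/quotient supported on $X^\vee$ and with constant restriction to $P^{\sm}$ has no summand supported on $X^\vee$'' is applied to the wrong object: the characterisation from \cite{BBD}*{1.4.25} pins down $E_{i0}$ among extensions of its restriction, but $H^i(\pi_*\QQ[\di_\xx])=E_{i0}\oplus\bigoplus_{j>0}E_{ij}$ certainly \emph{can} have sub/quotients supported on $X^\vee$ --- that is exactly what you are trying to rule out. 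Your projective-bundle computation for $\pr$ and classical weak Lefschetz over $P^{\sm}$ give no leverage over the discriminant.

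The paper's argument is quite different and avoids this problem. One embeds $\xx\hookrightarrow X\times P$ and lets $g:U\hookrightarrow X\times P$ be the open complement; the point is that $U$ is \emph{affine over $P$} (the fibre over $f\in P$ is the affine variety $X\setminus V(f)$). The short exact sequence $0\to i_*\QQ[r]\to g_!\QQ[r+1]\to\QQ[r+1]\to 0$ of perverse sheaves on $X\times P$, pushed forward by $\pr_2$, gives a triangle relating $\pi_*\QQ[r]$, $p_!\QQ[r+1]$ (where $p=\pr_2\circ g$), and $\pr_{2*}\QQ[r+1]$. Artin's theorem says $p_!$ is left $t$-exact since $p$ is affine, so $H^i(p_!\QQ[r+1])=0$ for $i<0$; hence $H^i(\pi_*\QQ[r])\cong H^{i-1}(\pr_{2*}\QQ[r+1])$ for $i<0$. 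The right-hand side is visibly $\rH^i(X,\QQ[2n-1])\otimes\QQ_P[d]$, a constant Hodge module on $P$ --- so both (i) and (ii) for $i<0$ fall out at once, and hard Lefschetz handles $i>0$. This is the relative form of the usual ``affine complement'' proof of weak Lefschetz, and it controls the whole of $H^i$, not just its generic part.
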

\begin{proof}
Let $\pr_2: X\times P\to P$ denote the projection on the second factor
and let $g:U\to X\times P$ denote the complement of $\xx$ in $X\times P$.  
We then have a commutative diagram
$$
\xymatrix{
\xx\ar[r]^{i}\ar[rd]_{\pi}   & X\times P \ar[d]^{\pr_2}    & U\ar[ld]^{p}\ar[l]_{g}\\
                            &    P                     &         \\
}
$$
where we write $p:U\to P$ for $\pr_{2|U}$.  

Note that $g:U\to X\times P$ is an affine open immersion.  Therefore
$g_!\QQ[2n+d]$ is perverse and we have an exact sequence
\begin{equation}
  \label{e.PExact}
  0\to i_*\QQ[2n+d-1]\to g_{!}\QQ[2n+d]\to \QQ[2n+d] \to 0
\end{equation}
in $\MHM(X\times P)$~\cite{BBD}*{Corollaire 4.1.3}.  

Applying $\pr_2$ to~\eqref{e.PExact} gives a distinguished triangle 
\begin{equation}
  \label{e.Dist}
 \pi_*\QQ[2n+d-1] \to p_!\QQ[2n+d]\to \pr_{2*}\QQ[2n+d]\to (\pi_*\QQ[2n+d-1])[1] 
\end{equation}
in $\Db\MHM(P)$.  Since $p$ is affine, $p_!$ is left
$t$-exact~\cite{BBD}*{Corollaire 4.1.2}.   Thus, $H^i(p_!\QQ[2n+d])=0$
in $\MHM(P)$ for $i<0$.  It follows then that
$H^{i-1}(\pr_{2*}\QQ[2n+d])=H^{i}(\pi_*\QQ[2n+d-1])$ for $i<0$.
Since $H^{i-1}(\pr_{2*}\QQ[2n+d])=H^i(X,\QQ[2n-1])\otimes\QQ[d]=E_{i0}$ by
weak Lefschetz, parts (i) and (ii) follow for $i<0$.

To finish the proof of (i), note that, by the Hard Lefschetz Theorem~\cite{SaitoMHP}*{Theorem 1 (b)}, 
\begin{equation}
\label{e.HardLef}
  E_{ij}\cong E_{-i,j}(-i).
\end{equation}
Therefore $E_{ij}=0$ for $i>0$ unless $j=0$.
\end{proof}

\begin{lemma}
\label{l.van}
Let $p\in P(\CC)$. Then $H^k_p(E_{ij})=0$ for $k<j-d$.  
\end{lemma}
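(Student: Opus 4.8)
The plan is to reduce the vanishing statement for local cohomology $\rH^k_p(E_{ij})$ to a support/dimension estimate for the intermediate extension defining $E_{ij}$, and for this to invoke the standard estimates of \cite{BBD}*{(4.2.4)} together with Theorem~\ref{t.wl}(i). First, recall from~\eqref{p.hhij} that $E_{ij}=(g_{ij})_{!*}\hh_{ij}[\di_P-j]$ where $g_{ij}:U_{ij}\hookrightarrow Z_{ij}$ is a dense open subset of the support $Z_{ij}\subset P$, a reduced closed subscheme of codimension $j$ (hence dimension $d-j$), and $\hh_{ij}$ is a local system on $U_{ij}$. Write $\ell:Z_{ij}\hookrightarrow P$ for the closed immersion; then $E_{ij}=\ell_*(g_{ij})_{!*}\hh_{ij}[d-j]$, and since $\ell$ is a closed immersion we have $\rH^k_p(E_{ij})=\rH^k_p\bigl((g_{ij})_{!*}\hh_{ij}[d-j]\bigr)$ computed on $Z_{ij}$ (and this is zero automatically unless $p\in Z_{ij}$, which forces $j\leq d$, consistent with the stated range).

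Next I would bring in the support conditions for intermediate extensions. The perverse sheaf $(g_{ij})_{!*}\hh_{ij}[d-j]$ on $Z_{ij}$ lies in $\psup\rD^{\leq 0}$, i.e.\ it satisfies the standard support condition $\dim\supp\rH^{m}\leq -m$ for its ordinary cohomology sheaves on the $(d-j)$-dimensional space $Z_{ij}$. For the closed point $i:\{p\}\to Z_{ij}$, the complex $i^*(g_{ij})_{!*}\hh_{ij}[d-j]$ therefore has cohomology concentrated in degrees $\geq -(d-j)$; more precisely, because the intermediate extension has \emph{no} subobject supported on a proper closed subset, the costalk/stalk vanishing of \cite{BBD}*{Cor.~4.2.6.2} gives that $i^*(g_{ij})_{!*}\hh_{ij}[d-j]$ lives in degrees $\geq -(d-j)+1$ when $p$ lies off the open stratum, and in degree exactly $-(d-j)$ only at points of $U_{ij}$. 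Translating $\rH^k_p(E_{ij})=\rH^{k}\bigl(\{p\}, i^*(g_{ij})_{!*}\hh_{ij}[d-j]\bigr)$, this yields $\rH^k_p(E_{ij})=0$ for $k<-(d-j)=j-d$, which is exactly the claim.

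There is, however, a subtlety I should address carefully: when $j=0$ and $p\in P^{\sm}$, the complex $E_{i0}$ restricted to $p$ is just $\hh_{i0}[d]$ in degree $-d$, so $\rH^k_p(E_{i0})=0$ for $k<-d=0-d$, matching the bound; and when $j>0$ we genuinely need $p\in Z_{ij}$ for anything nonzero, and the estimate $k<j-d$ is weaker than the naive stalk bound precisely by the non-torsion-subobject property of $j_{!*}$. I expect the main obstacle to be making the last point rigorous in the mixed Hodge module setting rather than just for perverse sheaves: one must check that the stalk/costalk estimates of \cite{BBD}*{\S4} transport to $\MHM$ via the faithful exact functor $\rat$ and commute with $i^*$ and shifts—but this is exactly the sort of transport already used silently throughout the paper (cf.\ the proof of Lemma~\ref{IntExt}), so invoking $\rat$-faithfulness plus proper base change should suffice. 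Theorem~\ref{t.wl}(i) is used only to know that the indices $(i,j)$ with $E_{ij}\neq 0$ and $j>0$ all have $i=0$, which streamlines bookkeeping but is not strictly needed for the stated inequality.
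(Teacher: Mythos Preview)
Your proposal is correct and follows essentially the same approach as the paper: both reduce to the support condition for perverse sheaves on the $(d-j)$-dimensional space $Z_{ij}$, though the paper's proof is a single citation of \cite{BBD}*{Proposition~2.1.11} rather than the \S4 references you give (which pertain to the mixed $\ell$-adic setting). Your discussion of the sharper intermediate-extension stalk bound, of Theorem~\ref{t.wl}(i), and of transporting the estimate through $\rat$ are all unnecessary for the stated inequality---as you yourself acknowledge---since the basic support condition $\dim\supp H^m\leq -m$ already forces $H^m(E_{ij})=0$ for $m<j-d$ and hence $\rH^k_p(E_{ij})=(H^k E_{ij})_p=0$ in that range.
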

\begin{proof}
We have $E_{ij}=(g_{ij})_{!*}\hh_{ij}[d-j]$ with the notation as in~\eqref{p.hhij}.  The result follows from~\cite{BBD}*{Proposition 2.1.11}.  
\end{proof}

\begin{corollary}
\label{c.H2n}
Let $p\in P(\CC)$, then
$$\rH^{2n}(\xx_p,\QQ)=\rH_p^{-d}(E_{10})\oplus\rH_p^{-d+1}(E_{00})\oplus
                      \rH_p^{-d+1}(E_{01}).
$$
\end{corollary}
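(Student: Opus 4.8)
The plan is to combine the general decomposition of Observation~\ref{o.BBDG} with the perverse weak Lefschetz theorem~\ref{t.wl} and the vanishing in Lemma~\ref{l.van}. First I would recall that, since $\xx$ is smooth of dimension $r=2n+d-1$, we have $\rH^{2n}(\xx_p,\QQ) = \rH^{2n-r}(\xx_p,\QQ[r]) = \rH^{2n-r}(\xx_p,\QQ[\di_\xx])$, so that the fourth decomposition displayed in Observation~\ref{o.BBDG} (with $n$ there equal to $2n$, remembering that the notation $\di_\xx$ appears as a shift) gives
$$
\rH^{2n}(\xx_p,\QQ) = \bigoplus_{ij} \rH^{2n-r-i}_p(E_{ij}) = \bigoplus_{ij}\rH^{-d+1-i}_p(E_{ij}),
$$
using $2n-r = -d+1$.

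Next I would prune this sum using Theorem~\ref{t.wl}(i), which says $E_{ij}=0$ unless $i=0$ or $j=0$. So only the terms with $i=0$ (arbitrary $j$) and the terms with $j=0$ (arbitrary $i$) survive. For the $j=0$ terms I would invoke Lemma~\ref{l.van}: $\rH^k_p(E_{i0})=0$ for $k<-d$, i.e.\ the summand $\rH^{-d+1-i}_p(E_{i0})$ vanishes whenever $-d+1-i<-d$, that is whenever $i>1$; and by Hard Lefschetz~\eqref{e.HardLef} together with Theorem~\ref{t.wl}(ii) one also controls $i<0$. Concretely, for $i\ge 2$ the term dies by Lemma~\ref{l.van}; for $i\le -1$ I would argue that $\rH^{-d+1-i}_p(E_{i0})$ vanishes as well — here $E_{i0}=\rH^i(X,\QQ[2n-1])\otimes\QQ[d]$ is (a shift of) a constant sheaf on $P$ placed in a single degree, and since $P\cong\PP^d$ its stalk cohomology $\rH^{-d+1-i}_p$ of $\QQ[d]$ sits in degree $-d$ only, forcing $-d+1-i=-d$, i.e.\ $i=1$, contrary to $i\le -1$. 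So among $j=0$ terms only $i=0$ and $i=1$ contribute, giving $\rH^{-d}_p(E_{10})$ and $\rH^{-d+1}_p(E_{00})$.

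For the $i=0$, $j\ge 1$ terms I would again use Lemma~\ref{l.van}: $\rH^k_p(E_{0j})=0$ for $k<j-d$, so $\rH^{-d+1}_p(E_{0j})=0$ as soon as $-d+1<j-d$, i.e.\ $j\ge 2$. Hence among the $i=0$ terms only $j=0$ (already counted) and $j=1$ survive, contributing $\rH^{-d+1}_p(E_{01})$. Assembling the three surviving summands yields exactly
$$
\rH^{2n}(\xx_p,\QQ)=\rH^{-d}_p(E_{10})\oplus\rH^{-d+1}_p(E_{00})\oplus\rH^{-d+1}_p(E_{01}),
$$
as claimed. The main obstacle I anticipate is the bookkeeping for the negative-$i$, $j=0$ terms: one must be sure these vanish, and the cleanest route is the observation that $E_{i0}$ for $i<0$ is a constant sheaf concentrated in a single perverse degree on projective space, whose local cohomology at a point is concentrated in the single degree $-d$ — one should double-check the degree conventions (in particular the shift by $\di_P=d$ in~\eqref{p.hhij} and the identification in Theorem~\ref{t.wl}(ii)) so the index arithmetic comes out to isolate precisely $i=1$. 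Everything else is a routine application of the displayed decomposition and the two vanishing statements.
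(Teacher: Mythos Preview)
Your proposal is correct and follows essentially the same route as the paper: decompose via Observation~\ref{o.BBDG}, kill the $i=0$, $j\ge 2$ terms with Lemma~\ref{l.van}, and kill the $j=0$, $i\notin\{0,1\}$ terms using the explicit description of $E_{i0}$ as a constant sheaf. The only cosmetic difference is that the paper treats all $i\neq 0$ uniformly by combining Theorem~\ref{t.wl}(ii) with Hard Lefschetz~\eqref{e.HardLef} to see $E_{i0}$ is constant for every $i\neq 0$ (so its local cohomology lives only in degree $-d$), whereas you split into $i\ge 2$ (handled by Lemma~\ref{l.van}) and $i\le -1$ (handled by Theorem~\ref{t.wl}(ii) alone); both bookkeepings are fine.
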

\begin{proof}
By~\eqref{o.BBDG},
\begin{align*}
\rH^{2n}(\xx_p,\QQ)&=\rH^{1-d}(\xx_p,\QQ[\di_{\xx}])\\
                   &=\oplus_{ij}\rH^{1-d-i}_p(E_{ij}).
\end{align*}

By Theorem~\ref{t.wl} and~\eqref{e.HardLef}, we see that, for $i\neq
  0$,
$$
\rH^k_p(E_{i0})=
\begin{cases}
\rH^i(X,\QQ[2n-1]) & k=-d\\
                0  & \text{else.}
\end{cases}
$$
Therefore, the only summand $\rH^{1-d-i}_p(E_{ij})$ contributing to
$\rH^{2n}(\xx_p,\QQ)$ with $i\neq 0$ is $\rH^{-d}_p(E_{10})$. 
Thus 
\begin{equation}
  \label{e.rest}
  \rH^{2n}(\xx_p,\QQ)=\rH^{-d}_p(E_{10})\oplus  (\bigoplus_j \rH^{1-d}_p(E_{0j}).
\end{equation}
However, by Lemma~\ref{l.van}, $\rH^{1-d}_p(E_{0j})=0$ for $j>1$.  
\end{proof}

In fact, the term $E_{01}$ is not difficult to compute and often trivial.  It
is governed by Lefschetz pencils.

\begin{definition}
  Let $\rP(\calL)$ be a property of ample line bundles.  We say that
  \emph{$\rP$ holds for $\calL\gg 0$} if for every ample line bundle $\calL$
  there is an integer $N$ such that $\rP(\calL^n)$ holds for $n>N$.
\end{definition}

\begin{para}
\label{p.LefPencil}
By~\cite{SGA72}*{Theorem 2.5}, the projective embedding of $X$ via the
complete linear system $|\calL|$ is a Lefschetz embedding.  Therefore,
we can find a Lefschetz pencil $\Lambda\subset P$.  To each
$p\in\Lambda\cap X^{\vee}$ one has vanishing cycles
$\delta_p\in\rH^{2n-1}(\xx_{\eta},\QQ)$ where $\eta$ denotes a 
point of $\Lambda(\CC)$ such that $\xx_{\eta}$ is smooth.  We say that
\emph{the vanishing cycles are non-trivial} if $\delta_p\neq 0$ for
some $p\in\Lambda\cap X^{\vee}$.  Note that this property depends only
on $\calL$: it is independent of the choice of $\Lambda\subset P$.  By
the well-known fact that the vanishing cycles are conjugates of each
other by the global monodromy of the Lefschetz fibration, it is
equivalent to saying that $\Lambda\cap X^{\vee}\neq\emptyset$ and
$\delta_p\neq 0$ for all $p\in\Lambda\cap X^{\vee}$.
\end{para}

\begin{proposition}
\label{p.VanNonZero}
  For $\calL\gg 0$, the vanishing cycles are non-trivial.
\end{proposition}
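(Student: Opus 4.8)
The plan is to show that for $\calL\gg 0$ the hyperplane sections $\xx_\eta$ (smooth members of a Lefschetz pencil $\Lambda$) have primitive cohomology $\rH^{2n-1}(\xx_\eta,\QQ)_{\prim}$ that is nonzero, since the vanishing cycles span this primitive part. Recall from the general theory of Lefschetz pencils that the vanishing cycles $\delta_p$, $p\in\Lambda\cap X^\vee$, generate the vanishing cohomology, which coincides (up to the usual conventions) with the primitive part $\rH^{2n-1}(\xx_\eta,\QQ)_{\prim}$ of the middle cohomology of the smooth hyperplane section; and by the Lefschetz hyperplane theorem $\rH^{2n-1}(\xx_\eta,\QQ)_{\prim}$ is the only place where the cohomology of $\xx_\eta$ can differ from that of $X$ in the relevant degree. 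So it suffices to prove that this primitive cohomology is nonzero for $\calL^k$ with $k$ large.

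First I would reduce to a statement about Hodge numbers: it is enough to show that $h^{p,q}(\xx_\eta)_{\prim}\neq 0$ for some $(p,q)$ with $p+q=2n-1$, equivalently that the middle primitive cohomology of the smooth hyperplane section $\xx_\eta\subset X$ does not vanish. Using the exact sequences relating the cohomology of $X$, of $\xx_\eta$, and the Gysin/residue sequence for the affine complement $X\setminus \xx_\eta$, one sees that the primitive part of $\rH^{2n-1}(\xx_\eta)$ is detected by the cohomology of the affine variety $X\setminus \xx_\eta$ in degree $2n$; concretely, for $\calL = \calL_0^k$ very positive, $X\setminus V(f)$ is affine of dimension $2n$ and carries nonzero $\rH^{2n}$ (for instance because its top de Rham cohomology is computed by a nonzero space of top-degree algebraic forms with poles along $\xx_\eta$, and this space grows with $k$). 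Alternatively, and perhaps more cleanly, I would invoke the standard computation that the dimension of the vanishing cohomology equals the number of critical points of a Lefschetz pencil, i.e. the degree of the dual variety $X^\vee$ (for $\calL$ giving a Lefschetz embedding with $X^\vee$ a hypersurface), together with the fact that $\deg X^\vee\to\infty$ as $\calL\to\infty$ when $\dim X = 2n\geq 1$; in particular $\deg X^\vee > 0$ forces the vanishing cohomology, hence some $\delta_p$, to be nonzero.

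The two ingredients I would assemble are therefore: (a) for $\calL\gg 0$ the embedding is Lefschetz with $X^\vee$ an honest hypersurface of strictly positive degree (this is where one uses that $\dim X>0$ and cites~\cite{SGA72}), so $\Lambda\cap X^\vee\neq\emptyset$ and the pencil has at least one, hence (by the monodromy-conjugacy already noted in~\eqref{p.LefPencil}) a uniform, nonzero vanishing cycle; and (b) the vanishing cycles are nonzero in $\rH^{2n-1}(\xx_\eta,\QQ)$ because their self-intersection is $\pm 2$ in even fiber dimension $2n-1$ — wait, the fiber $\xx_\eta$ has dimension $2n-1$, which is odd, so the relevant Picard–Lefschetz formula has the vanishing cycle with self-intersection $0$ and the local monodromy is a transvection; here I would instead argue that nonvanishing of $\delta_p$ follows from nontriviality of the local monodromy around $p$, which in turn follows from the fact that $\xx_\eta$ genuinely acquires a node as $\eta\to p$ and the specialization map on middle cohomology is not an isomorphism — equivalently $\rH^{2n-1}(\xx_\eta,\QQ)_{\prim}\neq 0$.

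The main obstacle is pinning down that $\rH^{2n-1}(\xx_\eta,\QQ)_{\prim}\neq 0$, i.e.\ ruling out the degenerate possibility that even very ample hyperplane sections of $X$ have trivial primitive middle cohomology. I expect to handle this by a Hodge-theoretic count: for $\calL = \calL_0^k$, the Hodge number $h^{2n-1,0}(\xx_\eta) = h^0(\xx_\eta,\omega_{\xx_\eta})$ grows without bound in $k$ by adjunction ($\omega_{\xx_\eta} = (\omega_X\otimes\calL)|_{\xx_\eta}$ and $h^0$ of this is eventually positive and increasing), while $h^{2n-1,0}(X)$ is fixed; since the Lefschetz hyperplane theorem gives $\rH^{q,0}(X)\xrightarrow{\sim}\rH^{q,0}(\xx_\eta)$ for $q<2n-1$ but only an injection for $q=2n-1$, the cokernel — which lies in the primitive part — is nonzero for $k\gg 0$. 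This forces $\rH^{2n-1}(\xx_\eta,\QQ)_{\prim}\neq 0$, hence some vanishing cycle is nonzero, and then the monodromy-conjugacy observation of~\eqref{p.LefPencil} promotes this to nonvanishing of $\delta_p$ for all $p\in\Lambda\cap X^\vee$, completing the proof.
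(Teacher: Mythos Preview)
Your final argument is correct and is essentially the paper's proof with the details filled in: both argue that if the vanishing cycles were trivial then $\rH^{2n-1}(X)\to\rH^{2n-1}(\xx_\eta)$ would be surjective (the paper phrases this via the invariant cycle theorem, you via the fact that the vanishing cycles span the primitive part), and both derive a contradiction from the growth of $\dim\rH^{2n-1}(\xx_\eta)$ as $\calL$ is replaced by high powers. The paper simply asserts this growth as ``easy to see''; your adjunction computation of $h^{2n-1,0}(\xx_\eta)$ makes it explicit. The earlier detours in your write-up (degree of $X^\vee$, self-intersection of $\delta_p$) are unnecessary and, as you noticed, the self-intersection route fails in odd fiber dimension; you should cut straight to the Hodge-number comparison.
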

\begin{proof}
  If the vanishing cycles are trivial, then the global monodromy of
  the Lefschetz pencil is trivial.  It follows from the invariant
  cycle theorem that $\rH^{2n-1}(X)$ surjects onto
  $\rH^{2n-1}(\xx_\eta)$ with $\eta$ as in~\eqref{p.LefPencil}.
  However, it is easy to see that, by taking $n\gg 0$, and considering
  Lefschetz pencils for the complete linear system $|\calL^n|$, we can make
  $\dim\,\rH^{2n-1}(\xx_{\eta})$ tend to infinity.
\end{proof}

\begin{theorem}
\label{t.VanNonZero}
If the vanishing cycles are non-trivial, we have $E_{01}=0$; otherwise, 
$\hh_{01}$ is a rank  $1$ variation of pure Hodge structure supported on a dense open subset of $X^{\vee}$.
\end{theorem}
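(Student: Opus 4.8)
The plan is to localize the computation of $E_{01}$ around a general point of $X^\vee$ — where the degeneration is a single ordinary node — and then to read the rank of $\hh_{01}$ off a comparison of the singular‑ and nearby‑fibre cohomology in degree $2n$.

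Since $|\calL|$ is a Lefschetz embedding (\cite{SGA72}*{Theorem 2.5}), $X^\vee\subset P$ is irreducible and its general point is a hyperplane tangent to $X$ at one point with an ordinary quadratic singularity; as $E_{01}$ vanishes over $P^{\sm}$ and is supported on the codimension‑one components of $X^\vee$, it is enough to analyse it near such a point. I would choose a general line $\Lambda\subset P$ — which we may take to be a Lefschetz pencil — so that $\xx_\Lambda:=\pi^{-1}(\Lambda)$ is smooth of dimension $2n$, $\pi_\Lambda:\xx_\Lambda\to\Lambda\cong\PP^1$ has only nodal singular fibres, $\Lambda\cap X^\vee$ lies in the open locus over which $\hh_{01}$ is a variation, and $\Lambda$ is non‑characteristic for $\pi_*\QQ[\di_{\xx}]$. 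Then $i_\Lambda^*[1-d]$ is perverse $t$-exact and commutes with $\rH^0$ and with $j_{!*}$, so, using proper base change, Proposition~\ref{p.Ei0}, and Theorem~\ref{t.wl}(i) (which shows that each $H^i(\pi_*\QQ[\di_{\xx}])$ with $i\ne 0$ is an intermediate extension from $P^{\sm}$, hence has no skyscraper part after restriction to $\Lambda$), one obtains
$${}^pH^j\big((\pi_\Lambda)_*\QQ[2n]\big)=\begin{cases}j_{!*}(L[1])\ \oplus\ K, & j=0,\\ j_{!*}\big((R^{j+2n-1}(\pi_\Lambda)_*\QQ)|_{\Lambda^\circ}[1]\big), & j\ne 0,\end{cases}$$
where $\Lambda^\circ=\Lambda\setminus(\Lambda\cap X^\vee)$, $L=(R^{2n-1}(\pi_\Lambda)_*\QQ)|_{\Lambda^\circ}$, and $K=\bigoplus_{p\in\Lambda\cap X^\vee}K_p$ with each $K_p$ a skyscraper at $p$ whose underlying vector space is the generic stalk $\hh_{01,p}$ of $E_{01}$. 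Everything is thereby reduced to computing $\dim K_p$ at one $p$.

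To do so I would compute $\rH^{2n}(\xx_p)$ in two ways. Applying $\mathcal H^0\circ i_p^*$ to the decomposition $(\pi_\Lambda)_*\QQ[2n]=\bigoplus_j {}^pH^j[-j]$, and using that $(R^{k}(\pi_\Lambda)_*\QQ)|_{\Lambda^\circ}$ has trivial local monodromy at $p$ for $k\ne 2n-1$ (a node affects only middle cohomology), a short stalk count gives $\rH^{2n}(\xx_p)\cong K_p\oplus\rH^{2n}(\xx_\eta)$ for $\eta$ near $p$. On the other hand, the Milnor fibre of the node in the $2n$-dimensional $\xx_\Lambda$ is homotopy equivalent to $S^{2n-1}$, so $\phi\QQ_{\xx_\Lambda}$ is a rank‑one skyscraper at the node in degree $2n-1$, and the triangle $\QQ_{\xx_p}\to\psi\QQ_{\xx_\Lambda}\to\phi\QQ_{\xx_\Lambda}\xrightarrow{+1}$ yields
$$0\to \rH^{2n-1}(\xx_p)\to \rH^{2n-1}(\xx_\eta)\xrightarrow{\ \alpha\ }\QQ\to \rH^{2n}(\xx_p)\to \rH^{2n}(\xx_\eta)\to 0,$$
where $\alpha(c)=\langle c,\delta_p\rangle$ is cup product with the vanishing cycle $\delta_p\in\rH^{2n-1}(\xx_\eta)$. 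Comparing the two computations gives $\dim K_p=1-\operatorname{rank}\alpha$, which is $0$ when $\delta_p\ne0$ (the cup product on $\rH^{2n-1}(\xx_\eta)$ is non‑degenerate by Poincaré duality) and $1$ when $\delta_p=0$.

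It remains to assemble the conclusion. If the vanishing cycles are non‑trivial, then $\delta_p\ne0$ for all $p\in\Lambda\cap X^\vee$, so all $K_p=0$; since $X^\vee$ is irreducible this forces $\hh_{01}$ to be a rank‑$0$ local system on a dense open of $X^\vee$, hence $\hh_{01}=0$ and $E_{01}=j_{!*}(0)=0$. If the vanishing cycles are trivial — in which case $X^\vee$ is a hypersurface, since otherwise $E_{01}$ would be supported in codimension $\ge 2$ and hence be zero — then all $\delta_p=0$, each $K_p$ has dimension $1$, and (varying $\Lambda$) this forces $Z_{01}=X^\vee$ with $\hh_{01}$ of generic rank $1$; finally $\hh_{01}$ is pure because $E_{01}$ is a direct summand of the pure Hodge module $\rH^0(\pi_*\QQ[\di_{\xx}])$. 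The step I expect to be the main obstacle is the transverse/non‑characteristic bookkeeping in the second paragraph — the perverse $t$-exactness of $i_\Lambda^*[1-d]$, its commutation with $j_{!*}$, and the identification of the skyscraper it extracts from ${}^pH^0$ with $\hh_{01,p}$ — together with fixing the normalization of the nearby/vanishing‑cycle triangle so that $\alpha$ is literally the pairing against $\delta_p$.
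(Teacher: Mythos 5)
Your proof is correct and follows essentially the same strategy as the paper's: in both cases one localizes at a general point $p$ of $X^{\vee}$, observes that the contribution of $E_{00}$ to $\rH^{2n}(\xx_p)$ vanishes there, identifies the remaining discrepancy between $\rH^{2n}(\xx_p)$ and $\rH^{2n}(\xx_\eta)$ with the stalk $(\hh_{01})_p$, and then reads off that discrepancy from the vanishing-cycles exact sequence for a Lefschetz pencil through $p$. The one substantive difference is how the two arguments dispose of the $E_{00}$ term. The paper invokes Corollary~\ref{c.H2n} directly and then observes that, at a smooth point of $X^{\vee}$, $\rH^{1-d}_p(E_{00})=\IH^1_p(\hh_{2n-1})=0$ because the local intersection cohomology of a local system ramified along a smooth divisor vanishes in degree $1$ at a point of that divisor. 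You instead restrict the entire decomposition $\pi_*\QQ[\di_{\xx}]$ noncharacteristically to the pencil $\Lambda$, arguing that $i_\Lambda^*[1-d]$ is $t$-exact and commutes with $j_{!*}$, which converts the same vanishing into the elementary statement that $j_{!*}(L[1])$ on a curve has no degree-$0$ stalk. This is a sound route, but it shifts the weight onto the noncharacteristicity bookkeeping you flag at the end, whereas the paper gets by with the more elementary support estimate of Lemma~\ref{l.van} together with the standard local IC computation on a smooth divisor; if you want to stay closer to the paper's machinery, you can skip the noncharacteristic pullback and simply cite Corollary~\ref{c.H2n} and \eqref{e.OnSmooth}. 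One small caveat to record explicitly: when $\codim_P X^{\vee}\ge 2$ every $E_{ij}$ with $j>0$ vanishes automatically, so the ``otherwise'' clause is only meaningful when $X^{\vee}$ is a hypersurface — you note this, and the paper tacitly assumes it, but it is worth keeping in mind since Proposition~\ref{p.VanNonZero} ensures this edge case disappears for $\calL\gg0$.
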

\begin{proof}
  Suppose $\hh_{01}\neq 0$.  Then clearly it is supported on a
  Zariski open subset $U_{01}$ of $X^{\vee}$ and, since $X^{\vee}$ is
  irreducible this subset must be dense.  Suppose $p\in U_{01}(\CC)$.
  Then $\rH^{-d+1}_p (E_{01})=(\hh_{01})_p$.  It follows from
  Corollary~\ref{c.H2n} that $\rH^{2n}(\xx_p,\QQ)\neq
  \rH^{-d}(E_{10})=\rH^{2n-2}(X,\QQ)(-1)$.  
There is a dense open subset $V\subset U_{01}(\CC)$ such that, if $p\in V(\CC)$, then there is a Lefschetz pencil $\Lambda$ through $p$.   
By the vanishing cycles
  exact sequence (see \cite{SGA72}*{Theorem 3.4 (ii)}), this implies
  that all the $\delta_p$ are zero.

Now suppose that the $\delta_p$ are zero. 
Using the vanishing cycles exact sequence again, we see that 
$\dim\, \rH^{2n}(\xx_p)=\dim\, \rH^{2n}(\xx_{\eta})+1$.
Now, note that, since $p$ is a smooth point of the discriminant locus 
$X^{\vee}$, 
\begin{equation}
\label{e.OnSmooth}
\rH^{1-d}_p(E_{00})=\IH^1_p(\hh_{2n-1})=0.
\end{equation}
(This follows from the fact that the local intersection cohomology of
a local system ramified along a smooth divisor at a point $p$ in that
divisor is trivial.)  Since $\rH^{-d}(E_{10})\cong
\rH^{2n}(X_{\eta})$, \eqref{e.OnSmooth} implies that
$\dim\,\rH^{1-d}_p(E_{01})=1$.  It follows that $\dim\,
(\hh_{01})_p=1$.
\end{proof}

\begin{remark}\label{r.naf}
 In fact, N.~Fakhruddin has shown us that, if $\calL\gg0$, we have
 $E_{ij}=0$ for all $i$ and for all $j>0$.  The proof, whose details
 will appear elsewhere, relies on the fact that, for $\calL\gg0$, the
 locus of hypersurfaces in $|\calL|$ with non-isolated singularities has 
codimension larger than the dimension of the hypersurfaces.      
\end{remark}

\begin{corollary}
\label{c.VanNonZero}
Let $\zeta\in\rH^{2n}(X,\ZZ(n))$ be a primitive Hodge class, let
$\omega\in\rH^{2n}_{\dd}(X,\QQ(n))$ be a Deligne cohomology class such
that $p(\omega)=\zeta$ where
$p:\rH^{2n}_{\dd}(X,\QQ(n))\to\rH^{2n}_{\dd}(X,\QQ(n))$ is the natural
map (from the introduction).  Suppose that the $\calL$ is a very ample
line bundle on $X$ such that the vanishing cycles of $P=|\calL|$ are
non-trivial.  Let $\nu$ be the normal function on $P\setminus
X^{\vee}$ given by $p\mapsto \omega_{|\calX_p}$.  Then, for $p\in P$,
we have
$$
\sigma_p(\nu)=\zeta_{|\calX_p}
$$
in $\rH^{2n}(\calX_p,\QQ_n)$.
\end{corollary}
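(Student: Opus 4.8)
Here is the plan. I want to realize $\nu$ as $N_n$ applied to a primitive absolute Hodge class pulled back from $X$, and then read off $\sigma_p(\nu)$ from the explicit description of $\sigma_p$ on $\Hodge^n(\xx)$ obtained at the end of \S\ref{s.BBDG} together with the decomposition of $\rH^{2n}(\xx_p,\QQ(n))$ in Corollary~\ref{c.H2n}.

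First I would pull $\omega$ back along $\pr:\xx\to X$. Since $\xx$ is smooth projective, $\pr^*\omega\in\Hab^{2n}(\xx,\QQ(n))=\rH^{2n}_{\dd}(\xx,\QQ(n))$ and $\rat(\pr^*\omega)=\pr^*\zeta$. The point to verify is that $\pr^*\omega$ lies in $\rH^{2n}_{\calA}(\xx,\QQ(n))_{\prim}$; equivalently that $\pr^*\zeta$ restricts to $0$ on every smooth fibre $\xx_\eta$, $\eta\in P\setminus X^{\vee}$. Under the identification $\xx_\eta=V(f)$ the restriction of $\pr^*\zeta$ to $\xx_\eta$ is the pullback of $\zeta$ along $\xx_\eta\hookrightarrow X$; since $\xx_\eta$ is a smooth ample divisor in $X$ of dimension $2n-1$, the Gysin map $\rH^{2n}(\xx_\eta,\QQ(n))\to\rH^{2n+2}(X,\QQ(n+1))$ is an isomorphism by weak Lefschetz and Poincar\'e duality, and it sends this pullback to $\zeta\cup c_1(\calL)=0$ since $\zeta$ is primitive; hence the pullback vanishes. (This is the same computation that shows $\omega|_{\xx_p}$ lands in the intermediate Jacobian, so that $\nu$ is well defined as in the introduction.)

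Next, by Fact~\ref{f.NF} we get $N_n(\pr^*\omega)(p)=(\pr^*\omega)|_{\xx_p}=\omega|_{\xx_p}$ for $p\in P\setminus X^{\vee}$, so $\nu=N_n(\pr^*\omega)$; in particular $\nu$ is an admissible $\QQ$-normal function. Using the description of $\sigma_p$ on $\Hodge^n(\xx)$ from the paragraph after Proposition~\ref{p.NoSing} (and Proposition~\ref{p.Amp}), $\sigma_p(\nu)$ is the image of the Hodge class $\pr^*\zeta=\rat(\pr^*\omega)$ under
\[ \rH^{2n}(\xx,\QQ(n))\xrightarrow{\ \Pi\ }\IH^1(P,\hh_{2n-1}(n))\xrightarrow{\ i^*\ }\IH^1_p(\hh_{2n-1}(n)). \]
By Observation~\ref{o.BBDG} restriction from $\xx$ to $\xx_p$ is compatible with the projections $\Pi$, so this composition equals
\[ \rH^{2n}(\xx,\QQ(n))\xrightarrow{\ \mathrm{res}\ }\rH^{2n}(\xx_p,\QQ(n))\xrightarrow{\ \Pi_p\ }\IH^1_p(\hh_{2n-1}(n)), \]
where $\Pi_p$ is projection onto the summand $\IH^1_p(\hh_{2n-1}(n))=\rH^{-d+1}_p(E_{00}(n))$. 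Writing $\zeta|_{\xx_p}$ for the pullback of $\zeta$ along $\xx_p=V(f)\hookrightarrow X$ (equivalently, $\mathrm{res}(\pr^*\zeta)$), this yields $\sigma_p(\nu)=\Pi_p(\zeta|_{\xx_p})$.

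Finally I would show $\Pi_p(\zeta|_{\xx_p})=\zeta|_{\xx_p}$, i.e.\ that $\zeta|_{\xx_p}$ already lies in the $\IH^1_p(\hh_{2n-1}(n))$ summand. This is where the hypothesis is used: since the vanishing cycles are non-trivial, Theorem~\ref{t.VanNonZero} gives $E_{01}=0$, so Corollary~\ref{c.H2n} (twisted by $(n)$) gives $\rH^{2n}(\xx_p,\QQ(n))=\rH^{-d}_p(E_{10}(n))\oplus\IH^1_p(\hh_{2n-1}(n))$. It therefore suffices to kill the $E_{10}$-component of $\zeta|_{\xx_p}$; by Observation~\ref{o.BBDG} this component is the restriction to $p$ of $\Pi_{10}(\pr^*\zeta)$, and $\Pi_{10}(\pr^*\zeta)=0$ because $\pr^*\zeta$ is primitive and, $E_{10}$ being the constant object $\rH^{2n-2}(X,\QQ)(-1)\otimes\QQ[d]$ (it is $j_{!*}$ of $R^{2n}\pi^{\sm}_*\QQ$, which is a constant local system by weak and hard Lefschetz on the smooth fibres), the $E_{10}$-summand of $\rH^{2n}(\xx,\QQ)$ is precisely the part detected by restriction to smooth fibres. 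Hence $\sigma_p(\nu)=\zeta|_{\xx_p}$. I expect the main difficulty to be bookkeeping rather than any single deep step: one must carefully pin down $\IH^1_p(\hh_{2n-1}(n))$ as a direct summand of $\rH^{2n}(\xx_p,\QQ(n))$, check that primitivity of $\zeta$ and constancy of $E_{10}$ annihilate the complementary $\rH^{-d}_p(E_{10}(n))$-component, and correctly feed in $E_{01}=0$ from the vanishing-cycles hypothesis via Theorem~\ref{t.VanNonZero}.
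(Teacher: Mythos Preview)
Your proposal is correct and follows essentially the same approach as the paper: both use the vanishing-cycles hypothesis (via Theorem~\ref{t.VanNonZero} and Corollary~\ref{c.H2n}) to reduce the decomposition of $\rH^{2n}(\xx_p,\QQ(n))$ to two summands, and then use primitivity of $\zeta$ to show that $\pr^*\zeta$ already lies in the $\IH^1_p(\hh_{2n-1}(n))$-summand, so that $\sigma_p(\nu)=(\pr^*\zeta)|_{\xx_p}=\zeta|_{\xx_p}$. The paper's proof is a terse four-line computation which asserts $\Pi(\pr^*\zeta)=\pr^*\zeta$ without further comment; you have simply unpacked the justifications (why $\pr^*\omega$ is primitive, why $\nu=N_n(\pr^*\omega)$, why the $E_{10}$-component vanishes via constancy of $E_{10}$) that the paper leaves implicit.
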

\begin{proof}
Since the vanishing cycles in $\calL$ are non-trivial, proper base change
shows that 
$$
\rH^{2n}(\xx_p,\QQ(n))=\IH^0_p(\hh_{2n}(n))\oplus \IH^1_p(\hh_{2n-1}(n)).
$$ 
As in Proposition~\ref{p.NoSing}, write $\Pi$ for the projection on
the second factor.

Since $\zeta$ is primitive, we have $\Pi(\pr^*\zeta)=\pr^*\zeta$.  Therefore, 
\begin{align*}
\sigma_p(\nu) &=\sigma_p(\pr^*\zeta)\\
&=(\Pi(\pr^*\zeta))_{|\xx_p}\\
                      &=(\pr^*\zeta)_{|\xx_p}\\
                      &=\zeta_{|\xx_p}.                   
\end{align*}
\end{proof}

\begin{example}
  \label{q.Quadric} Let $X\cong\PP^2$ and set $\calL=\oo_{\PP^2}(2)$.
  Then $\dim\xx=6$ and $\dim P=5$.  We have $E_{-1,0}=\QQ[5],
  E_{0,0}=0$ and $E_{1,0}=\QQ(-1)[5]$.  Since the vanishing cycles are
  trivial ($H^1(\xx_{\eta})=0$ and any Lefschetz pencil contains a
  singular conic), $\hh_{01}$ is non-zero.  In fact, let $V$ denote
  the locus of point $p\in P$ such that $\xx_p$ is the union of two
  distinct lines.  Note that $V$ is a dense open subset of $X^{\vee}$
  and $\pi_1(V)\cong\ZZ/2$.  It is easy to see that $\hh_{01}$ is the
  unique non-trivial rank $1$ variation of Hodge structure of weight
  $2$ on $V$.  Moreover, it is not difficult to see that $E_{0j}=0$ for $j>1$.
\end{example}

\section{Hodge Conjecture}
\label{HC}

In this section, we complete the proof of Theorem~\ref{t.Main}.   

Let $Y$ be a smooth projective complex variety and let $k\in\ZZ$.
We write $\Alg^k Y$ for the subspace of $\Hodge^k Y$ consisting of
algebraic cycles.   The Hodge conjecture for $Y$ is the statement that 
$\Alg^k Y=\Hodge^k Y$ for all $k$.
By Poincar\'e duality and the Hodge-Riemann
bilinear relations, the cup product
$$
\cup:\rH^{2k}(Y,\QQ(k))\otimes\rH^{2(\di_Y -k)}(Y,\QQ(\di_Y-k))\to 
\rH^{2\di_Y}(Y,\QQ(\di_Y))=\QQ
$$
restricts to a give a perfect pairing 
\begin{equation}
\label{e.HodgePairing}
\Hodge^k Y \otimes\Hodge^{\di_Y-k} Y\to \QQ.
\end{equation}
Therefore, the Hodge conjecture for $Y$ is equivalent to the assertion
that the perpendicular subspace $(\Alg^k Y)^{\perp}\subset
\Hodge^{\di_Y-k} Y$ is zero.   

\begin{lemma}
\label{l.EquivHodge}
  The following two statements are equivalent:
  \begin{enumerate}
  \item The Hodge conjecture holds for all smooth projective complex
    varieties Y.
  \item For every smooth projective complex variety $X$ of dimension $2n$ with
    $n\in\ZZ$, $(\Alg^n X)^{\perp}=0$.
  \end{enumerate}
\end{lemma}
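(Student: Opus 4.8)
The plan is the following. The implication (i) $\Rightarrow$ (ii) is immediate: if the Hodge conjecture holds, then $\Alg^n X=\Hodge^n X$ for every smooth projective $X$ of dimension $2n$, and since the pairing~\eqref{e.HodgePairing} with $Y=X$ and $k=n$ (which is the middle degree, as $\dim X=2n$) is perfect, the orthogonal complement of $\Alg^n X=\Hodge^n X$ inside $\Hodge^n X$ vanishes. For the converse, I would first record a purely linear-algebraic remark: since~\eqref{e.HodgePairing} is a perfect pairing on the finite-dimensional $\QQ$-vector space $\Hodge^n X$, every subspace $W\subseteq\Hodge^n X$ satisfies $\dim W+\dim W^{\perp}=\dim\Hodge^n X$, so $W^{\perp}=0$ if and only if $W=\Hodge^n X$. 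Hence (ii) is equivalent to the statement that $\Alg^n X=\Hodge^n X$ for \emph{every} even-dimensional smooth projective variety $X$, that is, to the Hodge conjecture for middle-dimensional Hodge classes. It therefore remains to deduce the full Hodge conjecture from this special case, which I would do in two reductions.

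The first reduction, by the Hard Lefschetz theorem, lowers the degree. Let $Y$ be smooth projective of dimension $m$ and let $\zeta\in\Hodge^k Y$ with $2k>m$; set $a=m-k$, so that $2a<m$. Cup product with the $(2k-m)$-th power of the class of a hyperplane section defines an isomorphism of Hodge structures $L^{2k-m}:\rH^{2a}(Y,\QQ(a))\xrightarrow{\ \sim\ }\rH^{2k}(Y,\QQ(k))$, which restricts to an isomorphism $\Hodge^a Y\xrightarrow{\ \sim\ }\Hodge^k Y$; write $\zeta=L^{2k-m}\zeta'$ with $\zeta'\in\Hodge^a Y$. Since $L$ is cup product with an algebraic class, $\zeta'\in\Alg^a Y$ forces $\zeta\in\Alg^k Y$. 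So it suffices to treat classes $\zeta\in\Hodge^k Y$ with $2k\le m$.

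The second reduction passes to a product with a projective space in order to reach the middle dimension. Given $Y$, $k$, $\zeta$ with $2k\le m$, put $s=m-2k\ge 0$ and $X=Y\times\PP^{s}$, a smooth projective variety of dimension $m+s=2(m-k)=:2n$; let $\pr_1:X\to Y$ and $\pr_2:X\to\PP^{s}$ be the projections and $[\mathrm{pt}]\in\Hodge^{s}\PP^{s}$ the class of a point. Then
\[
\xi:=\pr_1^{*}\zeta\cup\pr_2^{*}[\mathrm{pt}]\ \in\ \Hodge^{k+s}X=\Hodge^{n}X
\]
is a Hodge class in the middle degree, hence $\xi\in\Alg^n X$ by the special case proved above. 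Now $\pr_{1*}$ takes algebraic classes to algebraic classes, and by the projection formula together with $\pr_{1*}\pr_2^{*}[\mathrm{pt}]=1\in\rH^0(Y,\QQ)$ (integration over the $\PP^{s}$-fibre) we get $\pr_{1*}\xi=\zeta$; therefore $\zeta\in\Alg^k Y$. This proves (i).

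I do not expect a deep obstacle: the only delicate points are cycle-theoretic bookkeeping---checking that $L$ preserves algebraicity (so the first reduction uses only the trivial direction of the equivalence ``$\zeta$ algebraic $\Leftrightarrow$ $\zeta'$ algebraic''), and that $\pr_1^{*}$, cup product, and $\pr_{1*}$ all preserve algebraic classes with the correct degree and Tate twists, so that the single identity $\pr_{1*}\xi=\zeta$ suffices to conclude in the second reduction. Everything else is formal manipulation of the perfect pairing~\eqref{e.HodgePairing} and of the K\"unneth and Gysin formalism.
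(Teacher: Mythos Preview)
Your proof is correct but follows a different route from the paper's. The paper argues contrapositively: given $\alpha\in\Hodge^k Y$ with $\alpha\perp\Alg^{\di_Y-k}Y$, it shows $\alpha=0$. For $\di_Y<2k$ it sets $X=Y\times\PP^{2k-\di_Y}$ and shows $\pr_1^*\alpha\in(\Alg^kX)^{\perp}=0$ via the projection formula; for $\di_Y>2k$ it uses Bertini to cut $Y$ down by $\di_Y-2k$ hyperplane sections to a smooth $2k$-fold $X$, invokes weak Lefschetz for injectivity of $i^*$, and then pushes forward a cycle on $X$ pairing nontrivially with $i^*\alpha$. So the two cases are handled by opposite devices compared to yours: the paper uses products with projective space for the high-degree range and hyperplane sections for the low-degree range, whereas you use Hard Lefschetz for high degree and products (cupped with a point class, then pushed forward) for low degree. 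Your version avoids Bertini and weak Lefschetz at the cost of invoking Hard Lefschetz; the paper's version avoids Hard Lefschetz but needs the existence of smooth complete intersections. Both are short and standard; neither has a real advantage beyond taste.
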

\begin{proof}
  We have already seen that the first statement implies the second.
  Suppose then that the second statement holds.  Let $Y$ be a
  smooth projective variety.  Suppose $\alpha\in\Hodge^k Y$ is
  perpendicular to $\Alg^{\di_Y-k} Y$.  To prove the Hodge conjecture,
  we need to show that $\alpha=0$.  If $\di_Y=2k$ then we are already done.  

Suppose then that $\di_Y<2k$.  In this case, set
$X=Y\times\PP^{2k-\di_Y}$ and let $\beta=\pr_1^*\alpha$.  Suppose
$\beta\cup [Z]\neq 0$ for some $[Z]\in\Alg^{k} X$.   Then, by the
projection formula, $\alpha\cup \pr_{1*}[Z]\neq 0$.   Since this would
contradict the assumption that $\alpha\in(\Alg^{\di_Y-k}(Y))^{\perp}$,
we must have $\beta\in(\Alg^k X)^{\perp}$.  But then $\beta=0$.  Since
the map 
$\pr_1^*: \rH^{2k}(Y,\QQ(k))\to\rH^{2k}(X,\QQ(k))$ is injective, it
follows that $\alpha=0$.

Finally, suppose that $\di_Y > 2k$.  Since $Y$ is projective, we can use
Bertini to find a smooth subvariety $i:X\hookrightarrow Y$ which is
the intersection of $\di_Y-2k$ hyperplane sections.  By weak
Lefschetz, the restriction map $i^*:\rH^{2k}(Y,\QQ(k))\to
\rH^{2k}(X,\QQ(k))$ is injective.  Suppose $\alpha\neq 0$.  Then
$0\neq i^*\alpha\in\Hodge^k X$.  Therefore, by our assumption, there
exists a closed $k$-dimensional subvariety $Z\subset X$ such that
$i^*(\alpha)\cup [Z]\neq 0$.   Again, by the projection formula, it
follows that $\alpha\cup i_*[Z]\neq 0$.  Since this contradicts our
assumption that $\alpha$ is perpendicular to the algebraic classes,
we see that $\alpha=0$.   
\end{proof}

The following lemma is well-known.

\begin{lemma}
\label{l.Hilb}
  Let $X$ be a smooth projective complex variety.  Let $\calL$ be an
  ample line bundle on $X$ and let $Z\subset X$ be a closed
  subvariety.  Then there exists an integer $N$ such that, for all
  $m\geq N$, there exists a divisor $D\in |\calL^m|$ such that $Z\subset
  D$.
\end{lemma}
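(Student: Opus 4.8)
The plan is to exhibit the required divisor as the zero locus of a nonzero global section of $\calL^m$ lying in the ideal sheaf of $Z$, and to produce such a section via Serre vanishing. We may assume $Z\neq X$ (otherwise the statement is vacuous), so that the ideal sheaf $\ii_Z\subset\oo_X$ of $Z$ is a nonzero coherent subsheaf of $\oo_X$; since $X$ is integral, $\ii_Z$ has stalk $\oo_{X,x}$ at every point $x$ of the dense open set $X\setminus Z$.

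First I would invoke the basic property of ample line bundles: since $\calL$ is ample and $\ii_Z$ is coherent, there is an integer $N$ such that $\ii_Z\otimes\calL^m$ is generated by its global sections for every $m\geq N$ (see \cite{Hartshorne}). Fix such an $m$. The sheaf $\ii_Z\otimes\calL^m$ is nonzero — its stalk at a point of $X\setminus Z$ is a free rank-one $\oo_{X,x}$-module — so global generation forces $\rH^0(X,\ii_Z\otimes\calL^m)\neq 0$, and I choose a nonzero section $s$ in this group.

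Next I would view $s$ as a section of $\calL^m$ via the inclusion $\ii_Z\otimes\calL^m\hookrightarrow\calL^m$ obtained by tensoring $\ii_Z\hookrightarrow\oo_X$ with the invertible sheaf $\calL^m$; this inclusion induces an injection on $\rH^0$, so $s$ gives a nonzero section of $\calL^m$ and hence a nonzero effective divisor $D:=V(s)\in|\calL^m|$. Since on each open set trivializing $\calL^m$ the section $s$ is a regular function lying in $\ii_Z$, the zero scheme $V(s)$ contains $V(\ii_Z)=Z$. This yields, for every $m\geq N$, a divisor $D\in|\calL^m|$ with $Z\subset D$, as required.

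I do not expect any genuine obstacle here: the only substantive input is the standard fact that high twists of a coherent sheaf by an ample line bundle are globally generated. The two points that merely need care are that $s$, extracted from a subsheaf of $\calL^m$, is genuinely a nonzero section of $\calL^m$ (immediate from left-exactness of $\rH^0$ applied to the subsheaf inclusion), and that the degenerate case $Z=X$ has been excluded.
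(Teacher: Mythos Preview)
Your argument is correct and is exactly what the paper intends: its proof reads in full ``This follows from the definition of ample,'' and since the definition of ample used (as in \cite{Hartshorne}) is that every coherent sheaf becomes globally generated after twisting by a high power of $\calL$, your proof simply spells this out for $\ii_Z$.
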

\begin{proof}
This follows from the definition of ample.  
\end{proof}

Let $(X,\calL)$ be a pair as in~\eqref{e.ScriptX}.  For each positive integer
$m$, let $\xx_m$ denote the incidence variety associated to the pair
$(\xx,\calL^{\otimes m})$. Write $\pr_m:\xx_m\to X$ and $\pi_m:\xx\to
P_m:=|\calL^m|$ for the respective projections as in~\eqref{e.ScriptX}.

\begin{lemma}
\label{l.hypersurface}
  Suppose the Hodge conjecture holds for $X$, then for every non-zero
  Hodge class $\zeta\in \Hodge^{2n}(X)$, there exists a non-zero
  integer $m$ and a point $p\in P_m(\CC)$ such that
  $\zeta_{|\xx_p}\neq 0$.
\end{lemma}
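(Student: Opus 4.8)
The plan is to detect the non-vanishing of $\zeta_{|\xx_p}$ by producing a smooth $n$-dimensional variety mapping to $X$ through $\xx_p$ on which $\zeta$ pulls back to a non-zero class; the input is an algebraic cycle ``dual'' to $\zeta$, supplied by the Hodge conjecture. Write $\zeta$ for the given non-zero middle-dimensional Hodge class, so $\zeta\in\Hodge^n X\subset\rH^{2n}(X,\QQ(n))$. Since $\dim X=2n$, the cup product restricts to the perfect pairing $\Hodge^n X\otimes\Hodge^n X\to\QQ$ of~\eqref{e.HodgePairing}, and the Hodge conjecture for $X$ gives $\Hodge^n X=\Alg^n X$. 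As $\zeta\ne 0$, it follows that there is an irreducible closed subvariety $Z\subset X$ of dimension $n$ with $\langle\zeta,[Z]\rangle\ne 0$.

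Next I would pass to a resolution of singularities $\mu\colon\widetilde Z\to X$ of $Z$, with $\widetilde Z$ smooth projective of dimension $n$ and $\mu$ birational onto $Z$, so that $\mu_*(1)=[Z]$. The projection formula together with the compatibility of the Gysin pushforward with integration then gives
\[
\int_{\widetilde Z}\mu^*\zeta \;=\; \int_X \zeta\cup\mu_*(1) \;=\; \int_X \zeta\cup[Z] \;=\; \langle\zeta,[Z]\rangle \;\ne\; 0,
\]
so that $\mu^*\zeta\ne 0$ in $\rH^{2n}(\widetilde Z,\QQ(n))$ (which is the top cohomology of $\widetilde Z$).

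Finally I would apply Lemma~\ref{l.Hilb} to $\calL$ and $Z$: there is an integer $N$ such that for every $m\ge N$ there is a divisor $D\in|\calL^m|$ containing $Z$. Fix such an $m$ and $D$, and let $p\in P_m(\CC)$ be the point with $\xx_p$ identified via $\pr_m$ with $D\subset X$. Since $\widetilde Z$ is reduced and $\mu(\widetilde Z)=Z\subset D$, the morphism $\mu$ factors as $\widetilde Z\xrightarrow{\mu'}\xx_p\xrightarrow{\iota}X$; hence $\mu^*\zeta=(\mu')^*\iota^*\zeta=(\mu')^*(\zeta_{|\xx_p})$, and the non-vanishing of $\mu^*\zeta$ forces $\zeta_{|\xx_p}\ne 0$, which is the assertion.

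The point that makes this detour necessary — and the only place where care is needed — is that $D$, being constrained to contain $Z$, is in general singular, so one cannot invoke weak Lefschetz to see that $\rH^{2n}(X)\to\rH^{2n}(\xx_p)$ is injective; indeed it need not be, since the degree $2n$ lies strictly above the middle dimension $2n-1$ of $\xx_p$. Pulling back to the smooth resolution $\widetilde Z$ sidesteps this, being insensitive to the singularities or non-reducedness of $\xx_p$, and is also the reason for working with a dual cycle and its resolution rather than restricting $\zeta$ (or an algebraic representative of it) directly. I do not foresee any further obstacle.
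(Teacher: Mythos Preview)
Your proof is correct and follows essentially the same route as the paper's: use the Hodge conjecture and the perfect pairing~\eqref{e.HodgePairing} to find an irreducible $n$-cycle $Z$ with $\zeta\cup[Z]\ne 0$, then invoke Lemma~\ref{l.Hilb} to place $Z$ inside a divisor $D=\xx_p$ and factor the restriction. The only difference is cosmetic: the paper works directly with the singular cohomology $\rH^{2n}(Z,\QQ(n))$ and the factorization $\rH^{2n}(X)\to\rH^{2n}(\xx_p)\to\rH^{2n}(Z)$, whereas you pass to a resolution $\widetilde Z$ to make the projection-formula step explicit; neither detour is strictly necessary, and your closing remark about why weak Lefschetz does not apply is a nice clarification.
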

\begin{proof}
  Let $\zeta$ be a non-zero class in $\Hodge^{2n}(X)$.  By Poincar\'e
  duality and the Hodge-Riemann relations, there exists a class
  $\alpha\in\Hodge^{2n}(X)$ such that
  $0\neq\alpha\cup\zeta\in\Hodge^{4n}(X)\cong\QQ(2n)$.

  By the Hodge conjecture for $X$, we can write $\alpha=\sum_{i=1}^n
  a_i [Z_i]$ for $a_i\in \QQ$ and $Z_i$ closed subvarieties of $X$.
  Since $\zeta\cup\alpha\neq 0$, $\zeta\cup [Z_i]\neq 0$ for some
  index $i$.  Equivalently, $0\neq\zeta_{|Z_i}\in\rH^{2n}(Z_i,\QQ(n))$.   
  The lemma then follows from Lemma~\ref{l.Hilb}. 
\end{proof}

As in the introduction, a class $\zeta\in\Hodge^{2n}(X)$ is said to be
\emph{primitive} if $\zeta\cup c_1(\calL)=0$.  To each primitive Hodge
class $\alpha$ and every positive integer $m$, we can associate a
Hodge class $\pr_m^*(\zeta)\in\rH^{2k}(\xx,\QQ(k))_{\prim}$.

\begin{theorem}
\label{t.If}
  Assume that Hodge conjecture holds and let $(X,\calL)$ be a pair as
  in~\eqref{e.ScriptX}.   Then for every 
  non-zero primitive Hodge class $\zeta\in\rH^{2n} (X,\QQ(n))$, there
  exists a positive integer $m$ and a $p\in P_m$ such that
  $\sigma_p(\pr_m^*(\zeta))\neq 0$.
\end{theorem}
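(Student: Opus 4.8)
The plan is to deduce Theorem~\ref{t.If} from Corollary~\ref{c.VanNonZero} by a bookkeeping argument on the power $m$. Recall that, once the vanishing cycles of $|\calL^m|$ are non-trivial, Corollary~\ref{c.VanNonZero} applied to the pair $(X,\calL^m)$ identifies $\sigma_p(\pr_m^*(\zeta))$ with the ordinary restriction $\zeta_{|\xx_p}\in\rH^{2n}(\xx_p,\QQ(n))$ for every $p\in P_m$. So the whole task reduces to producing, for some admissible $m$, a point $p\in P_m$ at which $\zeta_{|\xx_p}\neq 0$.

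First I would extract a single subvariety of $X$ detecting $\zeta$, by re-running the argument in the proof of Lemma~\ref{l.hypersurface}: Poincar\'e duality and the Hodge--Riemann bilinear relations give a Hodge class $\alpha$ on $X$ with $\alpha\cup\zeta\neq 0$; writing $\alpha=\sum_i a_i[Z_i]$ via the Hodge conjecture for $X$, some closed subvariety $Z:=Z_i$ satisfies $\zeta\cup[Z]\neq 0$, i.e.\ $\zeta_{|Z}\neq 0$ in $\rH^{2n}(Z,\QQ(n))$. The point of recording this explicitly, rather than merely quoting Lemma~\ref{l.hypersurface}, is that $Z$ is now fixed once and for all, so we retain the freedom to enlarge $m$.

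Next I would choose $m$ large enough to meet two independent ``$\calL\gg0$'' conditions. By Lemma~\ref{l.Hilb} there is an $N_1$ such that for every $m\geq N_1$ there is a divisor $D_m\in|\calL^m|$ with $Z\subset D_m$; for such $m$, set $p:=[D_m]$, so that $\xx_p=D_m\supseteq Z$ and the restriction map $\rH^{2n}(\xx_p,\QQ(n))\to\rH^{2n}(Z,\QQ(n))$ carries $\zeta_{|\xx_p}$ to $\zeta_{|Z}\neq 0$; hence $\zeta_{|\xx_p}\neq 0$. By Proposition~\ref{p.VanNonZero} there is an $N_2$ such that the vanishing cycles of $|\calL^m|$ are non-trivial for all $m\geq N_2$. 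Fixing any $m\geq\max(N_1,N_2)$, the line bundle $\calL^m$ is very ample with non-trivial vanishing cycles, and $\zeta\cup c_1(\calL^m)=m\,(\zeta\cup c_1(\calL))=0$, so $\zeta$ is primitive with respect to $\calL^m$; Corollary~\ref{c.VanNonZero} then yields $\sigma_p(\pr_m^*(\zeta))=\zeta_{|\xx_p}\neq 0$, as desired.

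The only step needing care is the bookkeeping just described --- simultaneously arranging that $Z$ lies in a member of $|\calL^m|$ and that the vanishing cycles of $|\calL^m|$ are non-trivial --- which is painless, since each condition holds for all sufficiently large $m$. (There is no hidden issue with $\xx_p$ possibly being smooth: primitivity of $\zeta$ together with weak Lefschetz and the Gysin sequence on $X$ force $\zeta_{|D}=0$ for any smooth $D\in|\calL^m|$, so the point $p$ we construct automatically lies on the dual variety.) All of the genuinely hard input --- the perverse weak Lefschetz theorem and the vanishing-cycle computation of $E_{01}$ --- has already been absorbed into Corollary~\ref{c.VanNonZero}, so no further substantial work is required here.
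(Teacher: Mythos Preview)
Your proposal is correct and follows essentially the same route as the paper: both arguments combine Lemma~\ref{l.hypersurface} (or its proof, which you unpack explicitly) with Proposition~\ref{p.VanNonZero} to find a single $m$ for which both $\zeta_{|\xx_p}\neq 0$ at some $p\in P_m$ and the vanishing cycles of $|\calL^m|$ are non-trivial, and then invoke Corollary~\ref{c.VanNonZero} to identify $\sigma_p(\pr_m^*\zeta)$ with $\zeta_{|\xx_p}$. Your version is slightly more explicit about the bookkeeping (separating the thresholds $N_1,N_2$ and checking primitivity for $\calL^m$), but the content is the same.
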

\begin{proof}
Let $\zeta\in\rH^{2n}(X,\QQ(n))$ be a non-zero primitive Hodge class.
By Lemma~\ref{l.hypersurface}, there exists an integer $N$ such that, for every
$m\geq N$, there exists $p\in |\calL^m|$ such that $\zeta_{|\xx_p}\neq 0$.  
By Proposition~\ref{p.VanNonZero}, we can assume that the
vanishing cycles of Lefschetz pencils in $|\calL^m|$ are non-zero for
$m\geq N$.  Therefore, if $m\geq N$ and $p\in P_m$, Corollary~\ref{c.VanNonZero} shows that 
\begin{align*}
\sigma_p(\pr_m^*\zeta)&=\zeta_{|\xx_p}\\
                    &\neq 0.
\end{align*}
\end{proof}

\begin{theorem}
\label{t.OnlyIf}
  Suppose that for every pair $(X,\calL)$ as in~\eqref{e.ScriptX} and
  every primitive Hodge class $\zeta\in\rH^{2n}(X,\QQ(n))$, there
  exists an $m\in\ZZ$ and a $p\in P_m$ such that
  $\sigma_p(\pr_m^*\zeta)\neq 0$.   Then the Hodge conjecture holds.
\end{theorem}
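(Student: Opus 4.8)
The plan is to reduce, via Lemma~\ref{l.EquivHodge}, to proving that $(\Alg^n X)^{\perp}=0$ in $\Hodge^n X$ for every smooth projective $X$ of dimension $2n$, and to argue by induction on $n$. The base case $n\le 1$ is the Lefschetz $(1,1)$-theorem, and the inductive hypothesis is that the Hodge conjecture holds for all smooth projective varieties of dimension $<2n$. So fix such an $X$ and a very ample $\calL$ as in~\eqref{e.ScriptX}, and suppose $0\neq\zeta\in(\Alg^n X)^{\perp}$; I want a contradiction. The Hodge--Riemann relations make the intersection form on $\Alg^n X$ nondegenerate, so $\Alg^n X\cap(\Alg^n X)^{\perp}=0$ and therefore $\zeta\notin\Alg^n X$.

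First I would reduce to a primitive class. Using the hard Lefschetz decomposition, write $\zeta=\zeta_{0}+c_{1}(\calL)\cup\zeta'$ with $\zeta_{0}$ primitive and $\zeta'\in\Hodge^{n-1}X$. For a smooth hyperplane section $i\colon X_{1}\hookrightarrow X$ (of dimension $2n-1$), weak Lefschetz identifies $\zeta'$ with $i^{*}\zeta'\in\Hodge^{n-1}X_{1}$, which is algebraic by the inductive hypothesis; the Gysin push-forward then gives $c_{1}(\calL)\cup\zeta'=i_{*}(i^{*}\zeta')\in\Alg^n X$. Hence $\zeta_{0}\notin\Alg^n X$, and it is enough to show that a primitive Hodge class $\zeta_{0}\in\Hodge^n X$ is always algebraic.

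Now I would invoke the hypothesis of Theorem~\ref{t.OnlyIf}, contrapositively: assuming $\zeta_{0}\notin\Alg^n X$, I claim $\sigma_{p}(\pr_{m}^{*}\zeta_{0})=0$ for all $m$ and all $p\in P_{m}$, which contradicts the hypothesis. By Proposition~\ref{p.VanNonZero} we may replace $\calL$ by a high enough power that every $|\calL^{m}|$ has non-trivial vanishing cycles, so that Corollary~\ref{c.VanNonZero} applies and gives $\sigma_{p}(\pr_{m}^{*}\zeta_{0})=\zeta_{0}|_{\xx_{p}}$ for every $p$. When $\xx_{p}$ is smooth of dimension $2n-1$, the Gysin map $H^{2n}(\xx_{p},\QQ(n))\to H^{2n+2}(X,\QQ(n+1))$ is injective and carries $\zeta_{0}|_{\xx_{p}}$ to $\zeta_{0}\cup c_{1}(\calL^{m})=0$, so $\zeta_{0}|_{\xx_{p}}=0$. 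Everything therefore comes down to showing $\zeta_{0}|_{\xx_{p}}=0$ for the singular fibres $\xx_{p}=V(f)$.

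This singular-fibre case is the crux, and I expect it to be the main obstacle. Note first that $\zeta_{0}|_{\xx_{p}}=i_{\xx_{p}}^{*}\zeta_{0}$ lies in the image of the pure Hodge structure $H^{2n}(X,\QQ)$, so it is pure of weight $2n$ and survives pullback along a desingularization (or alteration) $\mu\colon\widetilde{\xx_{p}}\to\xx_{p}$; since $\widetilde{\xx_{p}}$ is smooth projective of dimension $2n-1<2n$, the resulting Hodge class $\mu^{*}(\zeta_{0}|_{\xx_{p}})$ is algebraic by the inductive hypothesis (directly, or, reducing codimension by one via hard Lefschetz on $\widetilde{\xx_{p}}$, from the codimension $n-1$ case). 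The difficulty is to transfer this back to $X$: the Gysin push-forward of a representing cycle on $\widetilde{\xx_{p}}$ vanishes, exactly because $\zeta_{0}$ is primitive, so $\zeta_{0}$ cannot be recovered from one fibre. One must instead use that $\xx_{p}$ belongs to the family $\pi_{m}\colon\xx_{m}\to P_{m}$, that by Theorem~\ref{t.TateSigma} the class $\sigma_{p}(\pr_{m}^{*}\zeta_{0})$ is a weight-$0$ Tate class in the local intersection cohomology, and that by the decomposition theorem together with the perverse weak Lefschetz theorem (Theorem~\ref{t.wl}) it is the specialization of a global class in $\IH^{1}(P_{m},\hh_{2n-1}(n))$, a direct factor of $H^{2n}(\xx_{m},\QQ(n))$; the algebraic cycles detected on the singular fibres then have to be assembled, by spreading out the family of vanishing cycles, into an algebraic cycle on $X$ with class $\zeta_{0}$. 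Making this last transfer precise, without appealing to Thomas' result on nodal hypersurfaces, is what I expect to be the technically hardest step.
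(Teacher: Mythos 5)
Your framework (induction on dimension, reduction via Lemma~\ref{l.EquivHodge}, reduction to a primitive class $\zeta_0$, resolution of $\xx_p$) is the right one, but the core logical move is inverted and, as stated, cannot work. You assume $\zeta_0\notin\Alg^n X$ and try to prove that $\sigma_p(\pr_m^*\zeta_0)=0$ for every $m$ and every $p\in P_m$, aiming to contradict the hypothesis. That implication is false and unprovable. The hypothesis asserts $\sigma_p(\pr_m^*\zeta_0)\neq 0$ for some $p$ whenever $\zeta_0$ is a non-zero primitive Hodge class, whether or not it is algebraic; combined with Corollary~\ref{c.VanNonZero} this already forces $\zeta_0|_{\xx_p}\neq 0$ at some singular fibre. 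Trying to prove ``$\zeta_0|_{\xx_p}=0$ for all singular fibres'' is therefore hopeless --- it is not, as you suggest, a technical obstacle; it directly contradicts the hypothesis you are assuming. The hypothesis must be used positively: extract a fibre $\xx_p$ with $\zeta_0|_{\xx_p}\neq 0$ and build an algebraic cycle out of it.

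Your final paragraph is closer to the mark but aims at the wrong target. You want a cycle on $X$ whose class \emph{is} $\zeta_0$, and you correctly observe that pushing forward a cycle $W$ on $\widetilde{\xx_p}$ with $[W]=\rho^*(\zeta_0|_{\xx_p})$ gives $\zeta_0\cup c_1(\calL^m)=0$. But Lemma~\ref{l.EquivHodge} only requires an algebraic cycle on $X$ that \emph{pairs non-trivially} with $\zeta_0$, and that is a much weaker demand. This is precisely what the paper produces. Once one checks that $\rho^*(\zeta_0|_{\xx_p})\neq 0$ in $H^{2n}(\widetilde{\xx_p})$ (strictness of the weight filtration applied to $H^{2n}(X)\to H^{2n}(\xx_p)$, since $H^{2n}(X)$ is pure of weight $2n$, together with the standard injectivity of $\Gr^W_{2n}H^{2n}(\xx_p)\hookrightarrow H^{2n}(\widetilde{\xx_p})$), the inductive hypothesis applied to the $(2n-1)$-dimensional variety $\widetilde{\xx_p}$ yields an algebraic cycle $W$ of codimension $n-1$ on $\widetilde{\xx_p}$ with $\langle\rho^*(\zeta_0|_{\xx_p}),[W]\rangle\neq 0$. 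Writing $\phi:\widetilde{\xx_p}\to\xx_p\to X$ for the composite, the projection formula gives $\langle\zeta_0,\phi_*[W]\rangle_X=\langle\phi^*\zeta_0,[W]\rangle_{\widetilde{\xx_p}}\neq 0$, so $\phi_*[W]\in\Alg^n X$ pairs non-trivially with $\zeta_0$, hence $\zeta_0\notin(\Alg^n X)^{\perp}$. No ``assembly'' of cycles across the family, no appeal to Theorem~\ref{t.TateSigma}, and no spreading-out argument is needed; the only ingredients beyond standard mixed Hodge theory are Corollary~\ref{c.VanNonZero} and Lemma~\ref{l.EquivHodge}.
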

\begin{proof}
  By Lemma~\ref{l.EquivHodge}, we only need to show that no middle dimensional primitive Hodge
  class is perpendicular to the algebraic cycles.  If $\zeta$ is a
  primitive Hodge class, then $\sigma_p(\pr_m^*\zeta)\neq 0\Rightarrow
  \Pi_1(\zeta_{|\xx_p})\neq 0\Rightarrow \zeta_{|\xx_p}\neq 0$.

  We then resolve singularity of $\xx_p$ and apply Deligne's mixed
  Hodge theory to finish the proof by induction.  This step is
  similar to the remark (attributed to B.~Totaro) on the bottom of page 181
  of Thomas' paper \cite{thomas}.

Let $\rho:\widetilde \xx_p\to \xx_p$ be a desingularization. Then
$\rho^*(\zeta_{|\xx_p})\in H^{2n}(\widetilde \xx_p)$
is clearly a Hodge class. We now prove that it is non-zero.

$H^{2n}(\xx_p)$  has a mixed Hodge structure whose weights range from 0
to $2n$.
We have the following factorization
$$
\rho^*: H^{2n}(\xx_p)\overset{-}\to Gr^{2n}_W
H^{2n}(\xx_p)\hookrightarrow H^{2n}(\widetilde {\xx_p}),
$$
where the $``-"$ above the first map stands for projection onto to the top graded quotient and the second map is an injection by
standard mixed Hodge theory.
By the strictness of morphisms between mixed Hodge structures, we have
$\overline{\zeta_{|\xx_p}}\neq 0\in Gr^{2n}_W H^{2n}(\xx_p)$.
Therefore $\rho^*(\zeta_{|\xx_p})\neq 0\in H^{2n}(\widetilde{\xx_p})$.

By induction on dimension, there is an algebraic cycle $W$ on
$\widetilde{\xx_p}$ of codimension $n-1$ (hence of dimension $n$)
which pairs non trivially with $\rho^*(\zeta_{|\xx_p})$. Therefore its
pushforward to $X$ pairs non trivially with $\zeta$.
Then the Hodge conjecture follows by Lemma~\ref{l.EquivHodge}.
\end{proof}

This completes the proof of Theorem~\ref{t.Main}.


\section{Singularities and rational maps}
\label{s.Singularities}

Suppose $S$ is a smooth complex algebraic variety and $\hh$ is a 
$\QQ$-variation of pure Hodge structure of weight $-1$ on $S$.  To simplify
notation, we write $\NFa{S,\hh}$ for the group
$\Ext^1_{\VMHS(S)^{\ad}}(\QQ,\hh)$.   If $\hh$ is a variation of pure
Hodge structure with integer coefficients of weight $-1$ on $S$, then
$\NF(S,\hh)^{\ad}\otimes\QQ=\NF(S,\hh_{\QQ})^{\ad}$ by Corollary~\ref{c.FixThis}.

\begin{lemma}
  Let $S$ be a smooth complex algebraic variety, let $\hh$ be a 
variation of $\QQ$-Hodge structure of weight $-1$ on $S$ and let $U\subset
  S$ be a non-empty Zariski open subset.  Then the restriction map
$$
\NF(S,\hh)^{\ad}\to\NF(U,\hh_{|U})^{\ad}
$$
is an isomorphism.
\end{lemma}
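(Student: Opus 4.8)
The plan is to fix a smooth compactification of $S$ that also compactifies $U$, reduce the statement to an identity between intermediate extensions on it, and then read off the isomorphism from Corollary~\ref{c.iso}. To begin, choose a smooth complete complex algebraic variety $\overline{S}$ together with an open immersion $j\colon S\to\overline{S}$ realizing $S$ as a Zariski dense open subset (Nagata compactification followed by Hironaka resolution). Let $\iota\colon U\to S$ denote the given open immersion and $j_U=j\circ\iota\colon U\to\overline{S}$, so that $\overline{S}$ is simultaneously a compactification of $U$. Since $\hh$ is a variation of pure Hodge structure, so is $\hh_{|U}$; both give polarizable pure Hodge modules $\hh[\di_S]$ on $S$ and $\hh_{|U}[\di_S]$ on $U$ (note $\di_U=\di_S$), which, being algebraic, are automatically extendable to $\overline{S}$, so Corollary~\ref{c.iso} applies to both $j$ and $j_U$. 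We also use, as at the start of this section, that $\NFa{\,\cdot\,,\hh}\otimes\QQ=\NFa{\,\cdot\,,\hh}$, these groups being already $\QQ$-vector spaces.

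The key point is that the two intermediate extensions $j_{!*}(\hh[\di_S])$ and $(j_U)_{!*}(\hh_{|U}[\di_S])$ agree as objects of $\MHM(\overline{S})^p$. Indeed, $\hh[\di_S]$ is a pure Hodge module whose underlying perverse sheaf is a shifted local system on the smooth variety $S$; it therefore has strict support $S$, and hence already coincides with the intermediate extension $\iota_{!*}(\hh_{|U}[\di_S])$ of its restriction to the dense open $U$. Transitivity of the intermediate extension along $U\hookrightarrow S\hookrightarrow\overline{S}$ --- which, exactly as in the proof of Lemma~\ref{IntExt}, follows from the corresponding fact for perverse sheaves via the faithful exactness of $\rat$ --- then gives
$$(j_U)_{!*}(\hh_{|U}[\di_S])=j_{!*}\bigl(\iota_{!*}(\hh_{|U}[\di_S])\bigr)=j_{!*}(\hh[\di_S]).$$
Call this common object $M\in\MHM(\overline{S})^p$.

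It remains to assemble the diagram. By Corollary~\ref{c.iso}, the pullback maps
$$j^*\colon\Ext^1_{\MHM(\overline{S})^p}(\QQ[\di_S],M)\to\NFa{S,\hh},\qquad
j_U^*\colon\Ext^1_{\MHM(\overline{S})^p}(\QQ[\di_S],M)\to\NFa{U,\hh_{|U}}$$
are isomorphisms (here $j^*M=\hh[\di_S]$ and $j_U^*M=\hh_{|U}[\di_S]$). Since $j$ and $\iota$ are open immersions, the pullback functors on mixed Hodge modules are $t$-exact and $j_U^*=\iota^*\circ j^*$; moreover, by the naturality of the equivalence of Lemma~\ref{l.MHMVMHS} with respect to restriction to open subsets, the map induced on $\Ext^1$ by $\iota^*\colon\MHM(S)\to\MHM(U)$ is precisely the restriction map $\NFa{S,\hh}\to\NFa{U,\hh_{|U}}$ of the statement. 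Hence this restriction map equals $j_U^*\circ(j^*)^{-1}$, a composite of isomorphisms, and the lemma follows.

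The only real content is the invariance $j_{!*}(\hh[\di_S])=(j_U)_{!*}(\hh_{|U}[\di_S])$, i.e.\ that shrinking $S$ to a dense open subset does not change the intermediate extension to $\overline{S}$; this is where the purity of $\hh$ (forcing $\hh[\di_S]$ to have strict support $S$) is used, and everything else is bookkeeping with the equivalences already established. The one further thing worth verifying carefully is that the identification $\NFa{\,\cdot\,,\hh}=\Ext^1_{\MHM}(\QQ[\di_S],\hh[\di_S])$ is natural for open-restriction functors, so that the abstractly built isomorphism $j_U^*\circ(j^*)^{-1}$ really is the geometric restriction of admissible normal functions.
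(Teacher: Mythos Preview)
Your proof is correct and follows essentially the same route as the paper: choose a smooth compactification $\overline{S}$, observe that $(j_U)_{!*}(\hh_{|U}[\di_S])=j_{!*}(\hh[\di_S])$, and then invoke Corollary~\ref{c.iso} twice to identify both normal-function groups with the same $\Ext^1$ on $\overline{S}$. You supply more detail than the paper does (justifying the intermediate-extension identity via strict support and transitivity, and checking that the resulting isomorphism is the actual restriction map), but the underlying argument is the same.
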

\begin{proof}
Using resolution of singularities, find an open immersion
$j:S\to\overline{S}$ with $\overline{S}$ proper.  Let
$j_U:U\to\overline{S}$ denote the inclusion.   then
$j_{U!*}\hh[\di_U]=j_{!*}\hh[\di_S]$.  Therefore, by Corollary 2.9,
\begin{align*}
  \NF(S,\hh)^{\ad} &=\NF(S,\hh)^{\ad}_{\overline{S}}\\
&=\Ext^1_{\MHM(\overline{S})} (\QQ[\di_S],j_{!*}\hh[\di_S])\\
&=\Ext^1_{\MHM(\overline{S})} (\QQ[\di_S],j_{U!*}\hh[\di_S])\\
&=\NF(U,\hh_{|U})^{\ad}_{\overline{S}}\\
&=\NF(U,\hh_{|U})^{\ad}.
\end{align*}
\end{proof}

\begin{definition}
  Let $S$ be a smooth complex algebraic variety.  We define a category
  $G_S$ as follows: Objects of $G_S$ are weight $-1$ variations of
  $\QQ$-Hodge structure defined on some non-empty Zariski open subset
  $U$ of $S$.  If $\hh$ and $\kk$ are objects in $G_S$ defined on open
  sets $U$ and $V$ respectively, then a morphism $\phi:\hh\to \kk$ is a
  morphism of variations of Hodge structure from $\hh_{|U\cap V}$ to
  $\kk_{|U\cap V}$.  We call $G_S$ the \emph{category of variations of
    Hodge structure over the generic point of $S$}.  Note that, if we
  let $\MHM(S)_{a,b}$ denote the full subcategory of $\MHM(S)$
  consisting of pure objects of weight $a$ with support of pure
  codimension $b$, then $G_S$ is equivalent to $\MHM(S)_{d-1, 0}$.
This equivalence is brought about by the functor sending $\hh$ supported on a Zariski open $j:U\hookrightarrow S$ to the mixed Hodge module $j_{!*}\hh$.   
\end{definition}

\begin{para}
Let $\hh$ and $\kk$ be two objects in $G_S$ with $\hh$ defined on a Zariski
open subset $U\subset S$ and $\kk$ defined on a Zariski open subset
$V\subset S$.  A morphism $\phi:\hh\to \kk$ in $G_S$ induces a  morphism
$$
\phi_*:\NF(U,\hh)^{\ad}\to \NF(V,\kk)^{\ad}
$$
via the composition
$$
\NF(U,\hh)^{\ad}\cong\NF(U\cap V,\hh)^{\ad}
\stackrel{\phi_*}{\to}\NF(U\cap V,\kk)^{\ad}\cong\NF(V,\kk)^{\ad}.
$$
It follows that the group $\NF(\hh)^{\ad}_{\QQ}$ of admissible $\QQ$-normal functions of
an object in $G_S$ is an isomorphism invariant.
\end{para}

\begin{para}\label{p.PullBackGen}
  Let $f:S\dashrightarrow P$ be a dominant rational map between smooth
  projective varieties.  Then $f$ induces a functor $f^*:G_P\to G_S$
  defined as follows.  Given $\hh$ defined on a Zariski open subset
  $U$ of $P$, let $V$ denote the largest Zariski open subset of $U$
  over which $f$ is defined.  The functor sends $\hh$ to
  $f^*\hh_{|V}$.  A similar construction defines $f^*$ of a morphism.
  Note that we have a natural map
$$
f^*:\NF(\hh)^{\ad}\to \NF(f^*\hh)^{\ad}.
$$
defined by pulling back the extension classes.  In particular, if $f$ is a
birational map, $\NF(\hh)^{\ad}_{\QQ}\cong  \NF(f^*\hh)^{\ad}_{\QQ}$.
\end{para}

\begin{conjecture}
\label{c.blow}
  Let $f:S\dashrightarrow P$ be a birational map between
  smooth projective varieties, let $\hh$ be a weight $-1$ variation of
  Hodge structure over the generic point of $P$ and let
  $\nu\in\NF(\hh)^{\ad}$ be an admissible normal function over the
  generic point of $P$.  If $\nu$ is singular on $P$, then $f^*\nu$ is
  singular on $S$.
\end{conjecture}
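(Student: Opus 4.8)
The plan is to reduce Conjecture~\ref{c.blow} to the case of a birational \emph{morphism} and then to track the singularity of $\nu$ through the decomposition theorem for that morphism. First note the trivial case: since the singularity $\sigma_{q}$ is by its very definition an analytic-local invariant, if $\nu$ has a non-torsion singularity at some point $q$ over which $f$ is a local isomorphism, then $f^{*}\nu$ has the corresponding singularity at the point of $S$ lying over $q$ and we are done; so the content of the conjecture is the case in which every singularity of $\nu$ is concentrated on the fundamental locus of $f$. Now resolve the graph of $f$: this yields a smooth projective variety $W$ together with birational \emph{morphisms} $g\colon W\to S$ and $\pi\colon W\to P$ with $\pi=f\circ g$ over the domain of definition of $f$, whence by~\ref{p.PullBackGen} and functoriality of pullback $g^{*}(f^{*}\nu)$ and $\pi^{*}\nu$ coincide as admissible $\QQ$-normal functions over the generic point of $W$. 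It therefore suffices to establish, for birational morphisms, both (I) ``if $\nu$ is singular on $P$ then $\pi^{*}\nu$ is singular on $W$'' and the complementary statement (II) ``if $g^{*}\mu$ is singular on $W$ then $\mu$ is singular on $S$''.

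For a birational morphism $\pi\colon W\to P$ I would argue as follows. Write $d$ for the dimension of $P$ (equivalently of $S$ and of $W$), shrink the domain $U\subset P$ of $\hh$ so that $\pi$ is an isomorphism over $U$, and let $Z\subset P$ be the image of the exceptional locus of $\pi$. The admissible extension $0\to j_{!*}\hh[d]\to M\to\QQ[d]\to 0$ in $\MHM(P)$ attached to $\nu$ by~\ref{c.iso} has a counterpart $0\to j_{!*}(\pi^{*}\hh)[d]\to\widetilde M\to\QQ[d]\to 0$ in $\MHM(W)$ attached to $\pi^{*}\nu$, exact by Lemma~\ref{IntExt} because $\QQ[d]$ is pure of weight $d$ and $j_{!*}(\pi^{*}\hh)[d]$ of weight $d-1$. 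By the decomposition theorem applied to $\pi$ (the Hodge-module form of~\eqref{l.BBDG}, normalised as in~\ref{r.canonical}), $j_{!*}^{P}\hh[d]$ is exactly the maximal-support, degree-$0$ summand of $\pi_{*}\bigl(j_{!*}^{W}(\pi^{*}\hh)[d]\bigr)$ — here one uses that $\pi$ is an isomorphism over $U$ — with all remaining summands supported on $Z$. Now fix $q\in P$ with $\sigma_{q}(\nu)\neq 0$ and form the fibre square
\[
\xymatrix{
W_{q}\ar[r]^{\iota_{q}}\ar[d]_{\pi_{q}} & W\ar[d]^{\pi}\\
\{q\}\ar[r]^{\iota} & P
}
\]
with $W_{q}=\pi^{-1}(q)$. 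Proper base change together with the compatibility of $\rat$ with the six functors, exactly as in Observation~\ref{o.BBDG}, identifies the image of $\sigma_{q}(\nu)$ under the inclusion of the above summand with the restriction to $W_{q}$ of the extension class of $\widetilde M$, i.e.\ with a class in $\rH^{1-d}\bigl(W_{q},\,\iota_{q}^{*}j_{!*}^{W}(\pi^{*}\hh)[d]\bigr)$. Thus $\sigma_{q}(\nu)\neq 0$ forces this class over the entire exceptional fibre $W_{q}$ to be non-zero.

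The hard part will be to pass from that non-vanishing over all of $W_{q}$ to the non-vanishing of $\sigma_{\tilde q}(\pi^{*}\nu)$ at a \emph{single} point $\tilde q\in W_{q}$ — and this is precisely why~\ref{c.blow} is stated only as a conjecture. The obstruction is that $\sigma_{\tilde q}(\pi^{*}\nu)$ is the further restriction of the above class to the \emph{stalk} at $\tilde q$, and when $W_{q}$ is positive-dimensional a non-zero class in $\rH^{1-d}(W_{q},-)$ can restrict to zero in every stalk: the singularity has been spread over the exceptional fibre without being localised. (Statement (II) carries the twin difficulty that the extension class of $\pi^{*}\nu$ lives with $j_{!*}^{W}(\pi^{*}\hh)[d]$ whereas the naive pullback of the extension class of $\mu$ lives with $g^{*}j_{!*}^{S}\hh[d]$, so one must first compare these two target complexes.) Settling the general case would require an additional input localising the relevant hypercohomology of $W_{q}$ at points — for instance a connectivity or weight estimate for $\iota_{q}^{*}j_{!*}^{W}(\pi^{*}\hh)[d]$ along the fibre, a Hard-Lefschetz or positivity argument on $W_{q}$, or a constraint on the possible exceptional fibres. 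The route that does go through in the setting of Green and Griffiths~\cite{GG} is to invoke R.~Thomas's theorem~\cite{thomas} together with control of the discriminant in $|\calL^{k}|$ (cf.\ Remark~\ref{r.naf}): the modification can then be chosen so that only very simple, explicitly understood centres are blown up, making the fibres $W_{q}$ and the attached vanishing-cycle classes concrete enough to pin the singularity down to a point — which is all that~\cite{GG} needs.
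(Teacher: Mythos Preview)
The statement is a \emph{conjecture}: the paper explicitly says (introduction, and the discussion following~\ref{c.blow}) that it has been unable to prove it in general. So there is no paper proof to compare against, and your proposal is appropriately not a proof either but an outline of an approach together with a correct identification of the obstruction --- that a nonzero class on the whole exceptional fibre $W_q$ need not survive restriction to any single stalk.

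Two points of comparison with what the paper actually does are worth making. First, your statement (II) --- ``singular upstairs implies singular downstairs'' for a birational \emph{morphism} --- is not a twin difficulty at all: it is exactly Proposition~\ref{p.Leray}, proved in one line by functoriality of the Leray spectral sequence for $Rj_*\hh$. So the reduction you perform leaves only (I) open, and (I) for morphisms is equivalent to the full conjecture. Second, your description of how the Green--Griffiths special case is rescued is not what the paper does. The paper does \emph{not} control the centres of the blowup $S\to P$; the resolution $f$ is arbitrary. Instead it restricts attention to the open locus in $P$ where the hypersurfaces have at worst ODP singularities (which Thomas guarantees meets the singular locus of $\nu$), and on that locus proves directly, via the Leray spectral sequence and an explicit sheaf-level base-change isomorphism $f^{*}j_{*}\hh\stackrel{\sim}{\to} j_{S*}f^{*}\hh$ (Lemma~\ref{l.ODPBase}, using Picard--Lefschetz and the local invariant cycle theorem), together with $R^{1}f_{*}f^{*}(j_{*}\hh)=0$ (Lemma~\ref{l.ProperBirational}), that the map $\rH^{1}(P,j_{*}\hh)\to\rH^{1}(S,j_{S*}\hh)$ is an isomorphism. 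This is a constructible-sheaf / Leray argument rather than a decomposition-theorem argument, and the ODP hypothesis enters through the base-change lemma, not through any simplification of the exceptional fibres of $f$.
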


Our initial motivation for making this conjecture was the the comparison of our
result~\ref{t.Main} with the analogous assertions made in~\cite{GG}.  

To explain this motivation, we briefly recall the assertions
of~\cite{GG}.  
Let $X,P$ and $\xx$ be as in~\eqref{e.ScriptX} and let 
$X^{\vee}\subset P$ denote the dual variety (i.e. discriminant locus) of $X$.  
In~\cite{GG}, the authors apply resolution of singularities to produce
a projective variety $S$ equipped with a birational morphism $f:S\to
P$ such that $f^{-1} X^{\vee}$ is a divisor with normal crossings. 
Let us call such an $S$ a \emph{resolution of the discriminant locus}.
The authors then make the following conjecture.

\begin{conjecture}
\label{c.GG}
  For every non-torsion primitive Hodge class $\zeta$, there is an
  integer $k$ and a resolution $f:S\to P=|\calL^k|$ of the
  discriminant locus such that,
  for any Deligne cohomology class $\omega$ mapping to $\zeta$,
  $f^*\nu(\omega,\calL^k)$ is singular on $S$.
\end{conjecture}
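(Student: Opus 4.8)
The plan is to establish Conjecture~\ref{c.GG}, granting the Hodge conjecture, along the following lines. Given a non-torsion primitive Hodge class $\zeta\in\rH^{2n}(X,\QQ(n))$, produce an integer $k$ and a point $p\in|\calL^k|$ over which the fibre $\xx_p$ has only ordinary double points and $\zeta_{|\xx_p}\ne 0$; then observe that near such a point the discriminant $X^{\vee}\subset|\calL^k|$ is a normal crossing divisor, so a resolution of the discriminant may be chosen to be an isomorphism near $p$; and since $\sigma_p$ is a local invariant, it survives such a resolution untouched.

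First I would fix $\zeta$ and invoke R.~Thomas's theorem \cite{thomas}: the Hodge conjecture for $X$ implies that, after passing to a sufficiently high power $\calL^k$ (for $k\gg 0$; cf.\ Lemma~\ref{l.Hilb}), there is $f\in|\calL^k|$ such that $V(f)$ is smooth away from finitely many nodes and $\zeta_{|V(f)}\ne 0$ in $\rH^{2n}(V(f),\QQ(n))$. Let $p\in P:=|\calL^k|$ be the corresponding point; in the notation of \eqref{e.ScriptX} the fibre $\xx_p$ equals $V(f)$, regarded as a hypersurface in $X$, so $\zeta_{|\xx_p}\ne 0$ in $\rH^{2n}(\xx_p,\QQ(n))$. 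By Proposition~\ref{p.VanNonZero} I may enlarge $k$ so that the vanishing cycles of $|\calL^k|$ are non-trivial; then Corollary~\ref{c.VanNonZero} gives, for every Deligne cohomology class $\omega$ mapping to $\zeta$,
\[
\sigma_p(\nu(\omega,\calL^k))=\zeta_{|\xx_p}\ne 0 ,
\]
a class which by Theorems~\ref{t.InIH} and \ref{t.TateSigma} lies in $\IH^1_p(\hh_{2n-1}(n))$ and is Tate of weight $0$; in particular it is non-torsion, so $\nu(\omega,\calL^k)$ is singular on $P$ at $p$.

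Second --- the only non-formal step --- I would analyse $X^{\vee}$ locally near $p$. Deformations of an ordinary double point are governed by one parameter, with discriminant a reduced point; correspondingly, for $k\gg 0$ the nodes of $\xx_p$ lie at distinct points that $\calL^k$ separates and that deform independently, so near $p$ the divisor $X^{\vee}$ is a union of smooth codimension-one analytic branches, one per node, meeting transversally --- that is, $X^{\vee}$ is a normal crossing divisor on some analytic neighbourhood $U$ of $p$. (Alternatively a Bertini/genericity argument reduces matters to the case where $\xx_p$ has a single node, whence $p$ is a smooth point of $X^{\vee}$ and there is nothing to check.) Since the pair $(P,X^{\vee})$ is already log smooth over $U$, resolution of singularities provides a resolution $f:S\to P$ of the discriminant locus --- a projective birational morphism with $f^{-1}X^{\vee}$ normal crossing --- which restricts to an isomorphism $f^{-1}(U)\cong U$. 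Let $s$ be the point of $S$ over $p$.

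Finally, the isomorphism $f^{-1}(U)\cong U$ identifies $f^{-1}(U\setminus X^{\vee})$ with $U\setminus X^{\vee}$ and carries $f^{*}\nu(\omega,\calL^k)$ to $\nu(\omega,\calL^k)$; hence the colimit over neighbourhoods of $s$ computing $\sigma_s(f^{*}\nu(\omega,\calL^k))$ is identified with the colimit over neighbourhoods of $p$ computing $\sigma_p(\nu(\omega,\calL^k))$, and therefore $\sigma_s(f^{*}\nu(\omega,\calL^k))=\sigma_p(\nu(\omega,\calL^k))\ne 0$ is non-torsion. Thus $f^{*}\nu(\omega,\calL^k)$ is singular on $S$, which is Conjecture~\ref{c.GG}. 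I expect the main obstacle to be the geometric content of the second step: showing that the discriminant of a family of hypersurfaces is a normal crossing divisor near a point parametrising a hypersurface with only nodes --- with the branches genuinely transverse, for which one needs enough positivity of $\calL^k$ to keep the nodes independent --- and matching Thomas's divisor on $X$, whose node configuration is not controlled a priori, with a point of $|\calL^k|$ of the required type. Everything else is formal, from the colimit definition of $\sigma_{\bullet}$ together with the computations of \S\ref{s.Vanishing}.
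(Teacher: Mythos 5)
The first half of your proposal coincides with the paper's argument: Thomas's theorem plus Proposition~\ref{p.VanNonZero} and Corollary~\ref{c.VanNonZero} give a point $p=[s]\in P=|\calL^k|$ with $\xx_p$ nodal and $\sigma_p(\nu(\omega,\calL^k))=\zeta_{|\xx_p}\neq 0$. The divergence, and the gap, is in how you get the singularity to survive a resolution of the discriminant. Your route requires $X^{\vee}$ to be a normal crossing divisor near $p$, so that some resolution is an isomorphism there. That holds only if the nodes of $\xx_p$ impose independent linear conditions on $|\calL^k|$; Thomas's theorem gives no control on the number or configuration of the nodes (which may grow with $k$), and for very nodal hypersurfaces the local analytic branches of the discriminant are genuinely non-transverse. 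You flag this as ``the main obstacle'' but do not close it, so the step is missing. Worse, your parenthetical fallback --- reduce to a one-nodal $\xx_{p'}$, where $p'$ is a smooth point of $X^{\vee}$ --- destroys the very thing you are trying to preserve: by~\eqref{e.OnSmooth}, $\IH^1_{p'}(\hh_{2n-1})=0$ at a smooth point of the discriminant, so $\sigma_{p'}(\nu)=0$ there. Singularities of normal functions live only over codimension~$\geq 2$ strata of $X^{\vee}$, so no genericity argument can push $p$ to a one-nodal point.

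The paper avoids the local geometry of $X^{\vee}$ entirely. It restricts to the locus of sections with at worst ODP singularities (which contains $p$) and proves Theorem~\ref{t.SingsPersist}: over such a locus the non-torsion singularity survives \emph{any} projective birational modification of the base. The mechanism is sheaf-theoretic: Lemma~\ref{l.ODPBase} shows $f^*j_*\hh\cong i_*g^*\hh$ (using the local invariant cycle theorem and the orthogonality of vanishing cycles in odd relative dimension), Lemma~\ref{l.ProperBirational} gives $f_*f^*\calF\cong\calF$ and $R^1f_*f^*\calF=0$ via Zariski's main theorem, and the Leray sequence for $f$ then identifies $\rH^1(P,j_*\hh)$ with $\rH^1(S,j_{S*}\hh)$, so the obstruction class detecting the singularity cannot die upstairs. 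If you want to salvage your approach, you would either need to prove the independence of the nodal conditions for the specific divisors Thomas produces, or replace the ``choose a resolution that misses $p$'' step by an argument of the persistence type just described.
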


One of the main assertions of~\cite{GG} is that Conjeture~\ref{c.GG}
holds (for all even dimensional $X$) if and only if the Hodge
conjecture holds (for all smooth projective algebraic varieties).  In
fact, we will now prove this assertion by proving
Conjecture~\ref{c.blow} in a special case, but we find this approach
unsatisfying.  Knowing conjecture~\ref{c.blow} would give a more
satisfying and direct proof.

We begin by establishing an easy case of Conjecture~\ref{c.blow}. 

\begin{proposition} 
\label{p.Leray}
Let $P$ be a smooth projective variety, $\hh$ a
  variation of pure Hodge structure of weight $-1$ on the generic
  point of $P$ and $f:S\to P$ a dominant morphism.  Let
  $\nu\in\NF(\hh)^{\ad}$.  If $f^*\nu$ is singular on $S$, then $\nu$ is
  singular on $P$.
\end{proposition}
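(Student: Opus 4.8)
The plan is to prove the contrapositive in pointwise form: fix $s\in S$, set $p=f(s)$, and show $\sigma_p(\nu)=0\Rightarrow\sigma_s(f^*\nu)=0$. Since ``$\nu$ is singular on $P$'' means $\sigma_q(\nu)\neq 0$ for some $q\in P$, this is enough: any $s$ witnessing that $f^*\nu$ is singular on $S$ produces, via $p=f(s)$, a witness that $\nu$ is singular on $P$. Write $U\subseteq P$ for the dense Zariski open on which $\hh$ is defined and set $V:=f^{-1}(U)$; it is a nonempty Zariski open of $S$ because $f$ is dominant and $U$ is dense, and $f^*\hh$ is a variation over the generic point of $S$ defined on $V$. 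By~\eqref{p.PullBackGen} the pullback $f^*\nu$ lies in $\NF(V,f^*\hh)^{\ad}$ and is obtained by pulling back along $f\colon V\to U$ the extension of variations of mixed Hodge structure that defines $\nu$. Finally, since $\IH^1_x\hookrightarrow\rH^1_x$ at every point $x$ by Lemma~\ref{l.inj}, I may test vanishing of all the singularity classes inside the ordinary local cohomology groups $\rH^1_x$, so the whole argument is sheaf-theoretic.

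The one substantive input is the naturality of the topological class map under $f$. Applying the (rational analogue of the) forgetful functor $r\colon\VMHS_\QQ\to\Shv_\QQ$ of Fact~\ref{f.Sheaves}, which commutes with $f^*$, to the extension defining $\nu$ yields the identity $\cl_\QQ(f^*\nu)=f^*\bigl(\cl_\QQ(\nu)\bigr)$ in $\rH^1(V,(f^*\hh)_\QQ)$, the $f^*$ on the right being the ordinary pullback on cohomology of the $\QQ$-local systems attached to $f\colon V\to U$. Equivalently, this is the commutativity of the square obtained by applying $i_s^*$ to the base-change morphism $f^*Rj_*\hh_\QQ\to Rk_*f^*\hh_\QQ$ for the cartesian square with corners $V,S,U,P$ (with $j\colon U\to P$ and $k\colon V\to S$ the open immersions); this base-change map carries $\sigma_p(\nu)$ to $\sigma_s(f^*\nu)$, which is the statement behind the title of the section.

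Now localize. If $\sigma_p(\nu)=0$ then, since $\rH^1_p(\hh_\QQ)=\colim_{p\in W}\rH^1(W\cap U,\hh_\QQ)$, there is an analytic open $W\ni p$ in $P$ with $\cl_\QQ(\nu)|_{W\cap U}=0$. By continuity of $f$ there is an analytic open $W'\ni s$ in $S$ with $f(W')\subseteq W$, whence $W'\cap V\subseteq f^{-1}(W\cap U)$ and $f$ restricts to $W'\cap V\to W\cap U$. Restricting the identity $\cl_\QQ(f^*\nu)=f^*\cl_\QQ(\nu)$ to $W'\cap V$ and using that restriction commutes with pullback gives $\cl_\QQ(f^*\nu)|_{W'\cap V}=f^*\bigl(\cl_\QQ(\nu)|_{W\cap U}\bigr)=0$. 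Passing to the colimit over analytic neighbourhoods of $s$, the class $\sigma_s(f^*\nu)$, which by definition is the image of $\cl_\QQ(f^*\nu)$ in $\rH^1_s((f^*\hh)_\QQ)=\colim_{s\in W''}\rH^1(W''\cap V,(f^*\hh)_\QQ)$, is zero.

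The only step needing genuine care is the naturality identity $\cl_\QQ(f^*\nu)=f^*\cl_\QQ(\nu)$ of the second paragraph: it is the compatibility of the forgetful functor $\VMHS_\QQ\to\Shv_\QQ$ with $f^*$ at the level of $\Ext^1$, together with the standard identification of $f^*$ of an extension of $\QQ$-local systems with the pullback on $\rH^1$. Everything else --- the colimit description of $\sigma$, continuity of $f$, the commutation of restriction with pullback, and the reduction to full local cohomology via Lemma~\ref{l.inj} --- is routine; projectivity and algebraicity of $S$ and $P$ enter only through~\eqref{p.PullBackGen}, i.e.\ to ensure $V=f^{-1}(U)$ is a nonempty Zariski open and $f^*\nu$ is again admissible.
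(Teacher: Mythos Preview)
Your proof is correct. The underlying idea is the same as the paper's---naturality of the topological class $\cl_\QQ(\nu)$ under pullback forces naturality of its localizations $\sigma_p$---but the packaging differs. The paper invokes the Leray spectral sequence for $Rj_*\hh_\QQ$: the edge map $\rH^1(U,\hh_\QQ)\to\rH^0(P,R^1 j_*\hh_\QQ)$ sends $\cl_\QQ(\nu)$ to a global section whose stalk at $p$ is exactly $\sigma_p(\nu)$, so ``$\nu$ singular on $P$'' becomes ``$s_j(\cl_\QQ\nu)\neq 0$'', and the conclusion follows from functoriality of the spectral sequence for the cartesian square with corners $f^{-1}U,S,U,P$. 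You instead unwind this stalkwise, working directly with the filtered colimit defining $\sigma_p$ and never naming the spectral sequence. Your route is slightly more elementary and yields the sharper pointwise statement $\sigma_{f(s)}(\nu)=0\Rightarrow\sigma_s(f^*\nu)=0$; the paper's route is terser and makes the later comparison (Theorem~\ref{t.SingsPersist}, where one must show the \emph{left} vertical arrow in the Leray diagram is an isomorphism) more transparent.
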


\begin{remark}
  In the following proof and the rest of this section, we will work
  with constructible sheaves as opposed to perverse sheaves.  To ease
  the notation, when $\calF$ is a constructible sheaf and $f$ is a
  morphism of complex schemes, we will write $f_*\calF$ for the
  usual (not derived) operation on constructible sheaves and
  $R^if_*\calF$ for the constructible higher direct image.
\end{remark}

\begin{proof}
  Suppose that $\hh$ is smooth over a dense Zariski open subset
  $j:U\hookrightarrow P$.  The Leray spectral sequence for $Rj_*\hh$ gives 
an exact sequence
\begin{equation}
  \label{e.Leray}
0\to \rH^1(P,R^0 j_*\hh) \to  \rH^1(U,\hh)\stackrel{s_j}{\to} \rH^0(P,R^1j_*\hh)
\end{equation}
and $\nu$ is singular on $P$ if and only if $s_j(\cl\nu)\neq 0$. The proposition follows by functoriality of the Leray
spectral sequence applied to the pullback diagram
\begin{equation}
\label{e.LerayPB}
\xymatrix{
f^{-1} U \ar[r]^{j_S}\ar[d]  & S\ar[d]^f\\
U\ar[r]^{j}                & P
}
\end{equation}
\end{proof}

\begin{corollary}
 Conjecture~\ref{c.GG} implies Conjecture~\ref{c.Main}.
\end{corollary}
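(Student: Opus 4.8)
The plan is to read this implication off directly from Proposition~\ref{p.Leray}, which already supplies the one substantive ingredient: a singularity of an admissible normal function survives when the function is pushed down along a dominant morphism. Concretely, fix a pair $(X,\calL)$ as in~\eqref{e.ScriptX} and a non-torsion primitive Hodge class $\zeta\in\rH^{2n}(X,\QQ(n))$; I must produce an integer $k$ for which $\zeta$ is singular on $|\calL^k|$. First I would choose any Deligne cohomology class $\omega$ with $p(\omega)=\zeta$. Assuming Conjecture~\ref{c.GG}, there is then an integer $k$ together with a resolution $f:S\to P:=|\calL^k|$ of the discriminant locus such that $f^*\nu(\omega,\calL^k)$ is singular on $S$.

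Next I would observe that $f$, being a birational morphism of projective varieties, is in particular a dominant morphism, so Proposition~\ref{p.Leray} applies to it. Taking $\hh$ to be the weight $-1$ variation $\hh_{2n-1}(n)$ attached to the family of hypersurfaces over $P\setminus X^{\vee}$ (as in Corollary~\ref{c.VanNonZero}) and $\nu:=\nu(\omega,\calL^k)\in\NF(\hh)^{\ad}$, the proposition yields that $\nu(\omega,\calL^k)$ is singular on $P=|\calL^k|$. By the definition of ``$\zeta$ is singular on $|\calL^k|$'' recalled in the introduction, this is exactly what was to be shown; since $\zeta$ was arbitrary, Conjecture~\ref{c.Main} follows.

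I do not expect a genuine obstacle here: the only thing to verify is a matter of bookkeeping with conventions. Namely, one checks that the class $f^*\nu(\omega,\calL^k)$ occurring in Conjecture~\ref{c.GG} is the image of $\nu(\omega,\calL^k)$ under the pullback homomorphism $\NF(\hh)^{\ad}\to\NF(f^*\hh)^{\ad}$ of~\eqref{p.PullBackGen} that is used in Proposition~\ref{p.Leray}, and that ``singular on $S$'' carries the same meaning in the two places (in either case it is the non-vanishing, after tensoring with $\QQ$, of a local singularity class). Both points are immediate from the definitions, so no further input---in particular no resolution of singularities or Hodge theory beyond Proposition~\ref{p.Leray}---is required.
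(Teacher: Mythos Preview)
Your proposal is correct and matches the paper's approach exactly: the corollary is stated immediately after Proposition~\ref{p.Leray} with no separate proof, so the authors intend precisely the argument you give---apply Proposition~\ref{p.Leray} to the dominant (indeed birational) morphism $f:S\to P=|\calL^k|$ to push the singularity of $f^*\nu(\omega,\calL^k)$ down to $P$. The bookkeeping checks you flag (compatibility of the two notions of $f^*$ and of ``singular'') are indeed immediate from the definitions.
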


We now begin the proof of the reverse implication.

\begin{lemma}\label{l.base-change}
Let $f:S\to P$ be a  morphism of smooth, complex algebraic
varieties.  Let $U$ be a non-empty Zariski open subset of $P$ such that 
$V:=f^{-1}U$ is Zariski dense in $S$, and
let $\calV$ be a $\QQ$-local system on $U$.
Form the cartesian diagram
$$
\xymatrix{
V\ar[r]^{i}\ar[d]_{g} & S\ar[d]^{f}\\
U\ar[r]^{j}           & P
}
$$
using the letters on the arrows as the names for the obvious maps.
Then the base change map $f^*j_*\calV\to i_*g^*\calV$ is an injection of 
constructible sheaves.
\end{lemma}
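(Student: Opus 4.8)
The plan is to show that the base change morphism $\beta\colon f^*j_*\calV\to i_*g^*\calV$ is injective by checking it on every stalk, since a morphism of sheaves is a monomorphism precisely when it is injective on all stalks. First one notes that both sheaves in question are constructible, so that it is harmless to treat $\beta$ as a morphism of ordinary sheaves of abelian groups: $j_*\calV$ is $\rH^0$ of $Rj_*\calV$, which is constructible because $j$ is an open immersion of varieties, and then $f^*$ and $i_*$ (the latter for an open immersion) preserve constructibility.

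Next, fix $s\in S$ and set $p=f(s)$. Because $f^*$ commutes with stalks and the square is cartesian, $(f^*j_*\calV)_s=(j_*\calV)_p=\colim_{W\ni p}\calV(W\cap U)$, the colimit over analytic open neighbourhoods $W$ of $p$ in $P$, while $(i_*g^*\calV)_s=\colim_{W'\ni s}(g^*\calV)(W'\cap V)$. A germ in the source is represented by a section $\sigma\in\calV(W_0\cap U)$; using that $P$ is smooth and irreducible, so that $P\setminus U$ has complex codimension $\geq 1$ and hence real codimension $\geq 2$, I would shrink $W_0$ to a coordinate ball around $p$ small enough that $W_0\cap U=W_0\setminus(P\setminus U)$ is connected. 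The image of this germ under $\beta$ is the germ at $s$ of $g^*\sigma\in(g^*\calV)(f^{-1}(W_0)\cap V)$.

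Now suppose that image germ is zero. Then $g^*\sigma$ vanishes on $W_1\cap V$ for some open $W_1$ with $s\in W_1\subseteq f^{-1}(W_0)$; since $V$ is dense in $S$, the set $W_1\cap V$ is a nonempty open subset of $S$, so $f(W_1\cap V)$ is a nonempty subset of $W_0\cap U$. For $v\in W_1\cap V$ one has $(g^*\sigma)_v=\sigma_{f(v)}$, so $\sigma$ has zero germ at every point of $f(W_1\cap V)$. The locus $N=\{x\in W_0\cap U:\sigma_x=0\}$ is open and closed in $W_0\cap U$ because $\calV$ is locally constant, it is nonempty by the previous sentence, and $W_0\cap U$ is connected, so $N=W_0\cap U$. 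Hence $\sigma=0$ on $W_0\cap U$, i.e. the germ we started with was already zero. This shows $\beta_s$ is injective, and hence $\beta$ is injective.

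The step that needs genuine care, and the only one where the hypotheses are used, is the connectedness input: one must know that a small analytic neighbourhood of $s$ in $S$ remains connected after deleting $Z:=S\setminus V$, and likewise for $p$ in $P$ after deleting $P\setminus U$. This is exactly where the assumption that $V=f^{-1}U$ is Zariski dense in $S$ enters — it forces $Z$ to be a proper closed subvariety, hence of real codimension $\geq 2$, so that removing it does not disconnect a ball — and it is precisely this that lets the vanishing of the local system section $\sigma$ propagate from the a priori small set $f(W_1\cap V)$ to the whole connected piece $W_0\cap U$. Everything else is a routine unwinding of the base change transformation and of the stalks of $j_*$ and $i_*$.
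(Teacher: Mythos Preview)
Your argument is correct and follows essentially the same route as the paper: check injectivity on stalks at $s\in S$ over $p=f(s)$, using that a small ball about $p$ meets $U$ in a connected set. The paper phrases the stalk computation via monodromy invariants---identifying $(f^*j_*\calV)_s$ with $\calV_z^{\pi_1(B\cap U,z)}$ and $(i_*g^*\calV)_s$ with $\calV_w^{\pi_1(D\cap V,w)}$, then noting the latter action factors through the former---whereas you track a section directly and propagate its vanishing by the identity principle for local systems; these are equivalent unwindings of the same idea.

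One small correction to your final paragraph: your argument does \emph{not} require that a punctured neighbourhood of $s$ in $S$ be connected. All you use on the $S$ side is that $W_1\cap V$ is nonempty, which follows from density of $V$. Connectedness is only needed on the $P$ side, for $W_0\cap U$, exactly as in the paper.
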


\begin{proof}
  Suppose that $s\in S(\CC)$ and that $p=f(s)\in P(\CC)$.  We can find
  a small ball $B$ about $p\in P$ such that $B\cap U$ is connected,
  and, for $z\in B\cap U$, $\ds (f^*j_*\calV)_s=\calV_{z}^{\pi_1(B\cap U,
    z)}$.  We can then find a small ball $D\subset f^{-1}B$ containing
  $s$ such that $D\cap V$ is connected, and then for $w\in D\cap V$, $\ds
  (i_*g^*\calV)_s=\calV_{w}^{\pi_1(D\cap V, w)}$.  Without loss of
  generality, we can assume that $f(w)=z$.  Since the action of
  $\pi_1(D\cap V, w)$ on $\calV_{w}$ then factors through $\pi_1(B\cap
  U, z)$, it follows that the base-change map $f^*j_*\calV\to i_*g^*\calV$
  is injective.
\end{proof}

\begin{lemma}
  Let $C$ be a smooth curve and $c\in C(\CC)$ and set $C'=C\setminus
  \{ c \}$.  Let $\pi:\xx\to C$ be a flat, projective morphism from a
  complex algebraic scheme $\xx$, and let $\pi'$ denote the
  restriction of $\pi$ to $\xx':=\pi^{-1}C'$.  Suppose that $\pi'$ is
  smooth of relative dimension $2k-1$ for $k$ an integer and that
  $\xx_c$ has at worst ODP singularities.  Set
  $\hh=R^{2k-1}\pi'_*\QQ(k)$ and let $j:C'\to C$ denote the
  open immersion including $C'$ in $C$.  Then
$$
\rH^{2k-1} \xx_c\stackrel{\cong}{\to}(j_*\hh)_{c}
$$
via the natural morphism coming from the Clemens-Schmid exact sequence.
\end{lemma}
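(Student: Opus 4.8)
The plan is to identify the morphism in the statement with the specialization (Clemens--Schmid) map $\mathrm{sp}\colon \rH^{2k-1}(\xx_c,\QQ(k))\to\rH^{2k-1}(\xx_t,\QQ(k))$ to a nearby smooth fibre $\xx_t$, and then to prove separately that $\mathrm{sp}$ is injective and that its image is exactly the subspace of local monodromy invariants. First I would localize: replacing $C$ by a small disc $\Delta$ around $c$ and $C'$ by $\Delta^{*}=\Delta\setminus\{c\}$, the stalk $(j_*\hh)_c$ is computed as $\rH^{0}(\Delta^{*},\hh)=\rH^{2k-1}(\xx_t,\QQ(k))^{T}$, where $T$ is the local monodromy and $\xx_t=\pi^{-1}(t)$ for $t\in\Delta^{*}$; and, $\pi$ being proper, the special fibre $\xx_c$ is a deformation retract of $\pi^{-1}\Delta$, so $\rH^{*}(\xx_c,\QQ)\cong\rH^{*}(\pi^{-1}\Delta,\QQ)$. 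Under these identifications the natural morphism of the statement is restriction to the nearby fibre, i.e.\ the specialization map $\mathrm{sp}$, and it manifestly factors through the $T$-invariants of $\rH^{2k-1}(\xx_t,\QQ(k))$.

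For surjectivity onto the invariants I would invoke the local invariant cycle theorem, which applies because $\pi$ is projective: $\mathrm{sp}\colon\rH^{2k-1}(\pi^{-1}\Delta,\QQ)\to\rH^{2k-1}(\xx_t,\QQ)^{T}$ is surjective. (Alternatively this can be extracted from the decomposition theorem recalled in~\S\ref{s.BBDG}, together with Lemma~\ref{l.inj}, which rewrites the relevant local intersection cohomology group as $\rH^{0}_c(\hh)=(j_*\hh)_c$.)

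Injectivity is where the ordinary double point hypothesis enters. Let $\iota\colon\xx_c\hookrightarrow\xx$ be the inclusion and let $\psi,\phi$ be the complexes of nearby and vanishing cycles of $\QQ_{\xx}$ relative to $\pi$, fitting in the triangle $\iota^{*}\QQ_{\xx}\to\psi\xrightarrow{\ \mathrm{can}\ }\phi\xrightarrow{+1}$. Since $\xx_c$ has at worst ordinary double points, $\phi$ is supported on the finite set $\Sigma$ of nodes, and at each node the Milnor fibre of $\pi$ is homotopy equivalent to a $(2k-1)$-sphere (the smoothing $z_1^{2}+\cdots+z_{2k}^{2}=\varepsilon$ of a nondegenerate quadric point); hence $\phi$ is a sum of skyscrapers placed in cohomological degree $2k-1$, so $\rH^{j}(\xx_c,\phi)=0$ for $j\neq 2k-1$. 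Taking hypercohomology of the triangle, and using properness of $\pi$ so that $\rH^{*}(\xx_c,\psi)=\rH^{*}(\xx_t,\QQ)$, yields the exact sequence
$$\rH^{2k-2}(\xx_c,\phi)\to\rH^{2k-1}(\xx_c,\QQ)\xrightarrow{\ \mathrm{sp}\ }\rH^{2k-1}(\xx_t,\QQ)\to\rH^{2k-1}(\xx_c,\phi),$$
and the vanishing $\rH^{2k-2}(\xx_c,\phi)=0$ forces $\mathrm{sp}$ to be injective. Twisting by $\QQ(k)$ and combining with the previous paragraph, $\mathrm{sp}$ is an isomorphism of $\rH^{2k-1}(\xx_c,\QQ(k))$ onto $\rH^{2k-1}(\xx_t,\QQ(k))^{T}=(j_*\hh)_c$, as claimed.

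The step I expect to be the main obstacle is the local analysis of $\phi$ at a node, i.e.\ showing it is concentrated in the middle degree $2k-1$. When $\xx$ is smooth near the node this is the classical computation of the Milnor fibre of a nondegenerate quadratic form, and one should phrase the setup so that only this is needed (it is the situation in the intended applications); for a general nodal degeneration one still has $\rH^{j}(\xx_c,\phi)=0$ for $j<2k-1$, which is all injectivity uses, but this deserves a word of justification.
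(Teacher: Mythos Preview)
Your argument is correct, but the paper takes a shorter and more elementary route that avoids the local invariant cycle theorem altogether. It appeals only to the Picard--Lefschetz package of \cite{SGA72}*{Expos\'e~XV, Th\'eor\`eme~3.4}: the vanishing-cycles exact sequence there already identifies $\im(\mathrm{sp})$ with $\bigcap_i\ker\langle-,\delta_i\rangle\subset\rH^{2k-1}(\xx_t)$, while the monodromy is $T(x)=x-\sum_i\langle x,\delta_i\rangle\delta_i$, the product of the individual transvections collapsing to this sum because the $\delta_i$, supported in disjoint Milnor balls, are mutually orthogonal. Oddness of the relative dimension then gives $\ker(T-1)=\bigcap_i\ker\langle-,\delta_i\rangle$ by pure linear algebra: if $\sum_i\langle x,\delta_i\rangle\delta_i=0$, pairing against $x$ in the alternating form yields $\sum_i\langle x,\delta_i\rangle^2=0$, so each $\langle x,\delta_i\rangle$ vanishes. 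Thus injectivity and the equality $\im(\mathrm{sp})=\rH^{2k-1}(\xx_t)^T$ both fall out of the same SGA7 computation, with no decomposition theorem in sight.

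Your approach has the virtue of working for more general isolated singularities once $\phi$ is concentrated in middle degree, at the cost of the heavier input. One caveat: you justify the local invariant cycle theorem by ``$\pi$ is projective,'' but that theorem also needs $\xx$ smooth (equivalently $\QQ_{\xx}[\di_{\xx}]$ pure), which the lemma does not assume. This is the same smoothness issue you flag for $\phi$ in your final paragraph; it bites the surjectivity half of your argument as well, whereas the Picard--Lefschetz computation above is insensitive to it.
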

\begin{proof}
  This follows from the Picard-Lefschetz formula
  of~\cite{SGA72}*{Theorem 3.4, Expos\'e XV}: one uses the fact that the
  relative dimension is odd and the vanishing cycles are orthogonal.
\end{proof}

We now consider a situation where we can show that the base change morphism
of Lemma~\ref{l.base-change} induces an isomorphism.

\begin{lemma}
\label{l.ODPBase}
  Let $h:\xx\to P$ be a proper, flat morphism of relative dimension
  $2j-1$ between smooth complex varieties
  such that $h$ is smooth over a dense Zariski open subset $U\subset
  P$ and, for all $p\in P$, $\xx_p$ presents at worst ODP
  singularities. Set 
$\hh={R}^{2k-1}h_*\QQ(n)_{|U}$.  
Let $f:S\to P$ be a morphism from a smooth  variety
  such that $V:=f^{-1} U$ is dense in $S$.  Form the cartesian diagram
$$
\xymatrix{
V\ar[r]^{i}\ar[d]_{g} & S\ar[d]^{f}\\
U\ar[r]^{j}           & P
}
$$
using the letters on the arrows as the names for the obvious maps.
Then the base change morphism 
induces an isomorphism
$f^*j_*\hh\to i_*g^*\hh$ of sheaves.
\end{lemma}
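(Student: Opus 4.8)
The plan is to verify that the base change morphism $f^*j_*\hh\to i_*g^*\hh$ is an isomorphism of sheaves by checking it on stalks; its injectivity is already furnished by Lemma~\ref{l.base-change}, so only the surjectivity of each stalk map is at issue. Fix $s\in S$ and put $p=f(s)$, and write $\Sigma=P\setminus U$ and $\Sigma_S=f^{-1}(\Sigma)=S\setminus V$ for the two discriminant loci. Choosing, as in the proof of Lemma~\ref{l.base-change}, a small ball $B$ about $p$ with $B\cap U$ connected, a small ball $D\subset f^{-1}(B)$ about $s$ with $D\cap V$ connected, a base point $\xi\in D\cap V$, and setting $z=f(\xi)\in B\cap U$, one identifies the two stalks with monodromy invariants on the nearby fibre:
\[
(f^*j_*\hh)_s=\hh_z^{\,\Gamma_p},\qquad (i_*g^*\hh)_s=\hh_z^{\,\Gamma_s},
\]
where $\Gamma_p=\mathrm{im}\bigl(\pi_1(B\cap U,z)\to\GL(\hh_z)\bigr)$ and $\Gamma_s=\mathrm{im}\bigl(\pi_1(D\cap V,\xi)\to\GL(\hh_z)\bigr)$. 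Since $\Gamma_s\subseteq\Gamma_p$ one automatically has $\hh_z^{\,\Gamma_p}\subseteq\hh_z^{\,\Gamma_s}$, and the task is to prove the reverse inclusion $\hh_z^{\,\Gamma_s}\subseteq\hh_z^{\,\Gamma_p}$.

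The essential input is the local structure of a family acquiring ordinary double points. Because the total space $\xx$ is smooth and the relative dimension $2k-1$ is odd (so every vanishing cycle is isotropic), the Picard--Lefschetz theory of \cite{SGA72} gives: near $p$ the discriminant $\Sigma$ is a union of smooth branches $\Sigma_1,\dots,\Sigma_m$; the monodromy of $\hh$ around each $\Sigma_\ell$ is a product of commuting Picard--Lefschetz transvections, hence a unipotent operator $T_\ell$ with $(T_\ell-\mathrm{Id})^2=0$; and the whole monodromy representation $\pi_1(B\cap U,z)\to\GL(\hh_z)$ factors through $H_1(B\cap U,\ZZ)=\ZZ^m$, so that $\Gamma_p$ is abelian and generated by $T_1,\dots,T_m$. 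Consequently
\[
\hh_z^{\,\Gamma_p}=\bigcap_{\ell=1}^m\ker(T_\ell-\mathrm{Id}).
\]
(The one-variable incarnation of this factorisation is exactly the preceding lemma, which also identifies $\hh_z^{\,\Gamma_p}$ with $\rH^{2k-1}(\xx_p,\QQ(n))$; only the displayed formula is needed below.)

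Next I would compare $\Gamma_s$ with the generators $T_\ell$ of $\Gamma_p$. Fix a branch $\Sigma_\ell$. Its preimage $f^{-1}(\Sigma_\ell)$ is a Cartier divisor on $S$ — the pull-back of the Cartier divisor $\Sigma_\ell$ under the dominant map $f$ — which contains $s$, so near $s$ it has the form $\sum_t m_{\ell,t}\,\Sigma_{S,\ell,t}$ with $m_{\ell,t}\ge 1$ and each $\Sigma_{S,\ell,t}$ a branch through $s$. A meridian of such a branch $\Sigma_{S,\ell,t}$ in $D\cap V$ is carried by $f$ to the $m_{\ell,t}$-th power of a meridian of $\Sigma_\ell$ in $B\cap U$, hence (using that $\Gamma_p$ is abelian, so that no conjugation ambiguity arises) $\Gamma_s$ contains $T_\ell^{\,m_\ell}$ for some integer $m_\ell\ge 1$. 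Writing $T_\ell=\mathrm{Id}+N_\ell$ with $N_\ell^{\,2}=0$ gives $T_\ell^{\,m_\ell}-\mathrm{Id}=m_\ell N_\ell$, so $\ker(T_\ell^{\,m_\ell}-\mathrm{Id})=\ker N_\ell=\ker(T_\ell-\mathrm{Id})$. Therefore
\[
\hh_z^{\,\Gamma_s}\subseteq\bigcap_{\ell=1}^m\ker(T_\ell^{\,m_\ell}-\mathrm{Id})=\bigcap_{\ell=1}^m\ker(T_\ell-\mathrm{Id})=\hh_z^{\,\Gamma_p},
\]
which is the inclusion we wanted.

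The main obstacle is the structural statement of the second paragraph — in particular the claim that the local monodromy of an ordinary double point degeneration is abelian and is generated by the transvections around the branches of the discriminant — precisely when several of those branches meet, possibly tangentially, at $p$, so that $\pi_1(B\cap U,z)$ itself need not be abelian. This is exactly the point at which the smoothness of $\xx$, the oddness of the relative dimension, and the ordinary double point hypothesis are all used, via \cite{SGA72}, and the preceding lemma is its one-dimensional case. By contrast, the stalk descriptions of $f^*j_*\hh$ and $i_*g^*\hh$ (which refine the proof of Lemma~\ref{l.base-change}) and the multiplicity bookkeeping for $f^{-1}(\Sigma_\ell)$ are routine.
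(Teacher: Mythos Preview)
Your direct monodromy approach has a genuine gap in the third paragraph. The claim that a meridian of a branch $\Sigma_{S,\ell,t}$ of $f^{-1}(\Sigma_\ell)$ is carried by $f$ to a \emph{pure} power $T_\ell^{m_{\ell,t}}$ is false in general, because distinct pullback divisors $f^{-1}(\Sigma_\ell)$ and $f^{-1}(\Sigma_{\ell'})$ can share irreducible components. Take $f:S\to\CC^2$ the blowup of the origin with $\Sigma_1=\{x=0\}$, $\Sigma_2=\{y=0\}$: the exceptional divisor $E$ lies in both $f^{-1}(\Sigma_1)$ and $f^{-1}(\Sigma_2)$, and at a generic point $s\in E$ the only branch of $\Sigma_S$ through $s$ is $E$ itself, whose meridian maps to $T_1T_2$. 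Thus $\Gamma_s=\langle T_1T_2\rangle$ and no pure power $T_\ell^{m}$ lies in $\Gamma_s$. What your meridian bookkeeping actually yields is only that for each $\ell$ there is an element $\prod_{\ell'}T_{\ell'}^{a_{\ell'}}\in\Gamma_s$ with all $a_{\ell'}\ge0$ and $a_\ell>0$. This can be repaired within your framework: since $N_iN_j=0$ one has $\prod T_{\ell'}^{a_{\ell'}}-I=\sum a_{\ell'}N_{\ell'}$, so a vector $v$ fixed by this element satisfies $\sum a_{\ell'}(v,\delta_{\ell'})\delta_{\ell'}=0$; pairing this relation with $v$ gives $\sum a_{\ell'}(v,\delta_{\ell'})^2=0$, and positivity of squares over $\QQ$ together with $a_\ell>0$ forces $(v,\delta_\ell)=0$. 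So the step you flagged as routine is the one that actually fails as written, while the structural monodromy claim you flagged as the main obstacle is in fact fine.

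For comparison, the paper sidesteps all explicit monodromy. It chooses a smooth curve $C\subset S$ through $s$ with $C\cap V$ dense in $C$ and sandwiches the base change map inside the chain
\[
\rH^{2k-1}(\xx_p)\twoheadrightarrow(j_*\hh)_p\hookrightarrow(i_*g^*\hh)_s\hookrightarrow\bigl((i|_{C'})_*\hh|_{C'}\bigr)_s\cong\rH^{2k-1}(\xx_p),
\]
where the surjection on the left is the local invariant cycle theorem for the smooth total space $\xx\to P$, the two injections are instances of Lemma~\ref{l.base-change}, and the isomorphism on the right is the preceding curve lemma applied to the flat pulled-back family $\xx\times_P C\to C$ (which has ODP fibres; smoothness of this total space is not needed). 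Since the composite is the identity, every map is an isomorphism. This argument never needs to know that $\Gamma_p$ is abelian or generated by transvections.
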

\begin{proof}
  We have already shown that the map is an injection.  To prove
  surjectivity, we are going to use the local invariant cycle theorem
  of~\cite{BBD}.   

Pick $s\in S(\CC)$.  We can find a smooth curve $C$ passing through
$s$ such that $C':=C\cap V$ is dense in $C$.   Since $h:\xx\to P$ is flat,
$h_C:\xx_C\to C$ is also flat.   It follows that 
$$
(({i}_{|{C'}})_*\hh_{|C'})_c\cong\rH^{2k-1}\xx_c.
$$
On the other hand, since $\xx$ is smooth, the local invariant cycle
theorem shows that 
$$
\rH^{2k-1}\xx_{c}\twoheadrightarrow ({j}_*\hh)_{f(c)}.
$$
Therefore we have a sequence
$$
\rH^{2k-1}\xx_{c}\twoheadrightarrow ({j}_*\hh)_{f(c)}
\hookrightarrow({i}_*g^*\hh)_c\hookrightarrow
(({i}_{|{C'}})_*\hh_{|C'})_c\cong\rH^{2k-1}\xx_{c}.$$
Since the composition is the identity, the maps in the sequence are
all isomorphisms.   
\end{proof}

\begin{lemma}
\label{l.ProperBirational}
  Let $f: X\to Y$ be a projective birational morphism between smooth
  complex varieties.  Let $\calF$ be a constructible sheaf of
  $\QQ$-vector spaces on $P$.  Then
  \begin{enumerate}
  \item the map $\calF\to f_*f^*\calF$ is an isomorphism of
    constructible sheaves;
  \item we have $R^1f_*f^*\calF=0$.
  \end{enumerate}
\end{lemma}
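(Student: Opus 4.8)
I would reduce both statements to a stalkwise computation via proper base change, and then identify the one genuinely geometric input.

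Since $f$ is projective, hence proper, for every $y\in Y$ and every $q$ proper base change gives $(R^q f_* f^*\calF)_y \cong H^q\bigl(X_y,(f^*\calF)|_{X_y}\bigr)$, where $X_y:=f^{-1}(y)$. Moreover $(f^*\calF)|_{X_y}$ is the constant sheaf on $X_y$ with stalk $\calF_y$ (the restriction of a pullback to a fibre is constant), so $(R^q f_* f^*\calF)_y \cong H^q(X_y,\QQ)\otimes_{\QQ}\calF_y$ for all $q$. For part (i): since $f$ is birational and proper and $Y$ is smooth (hence normal), Zariski's Main Theorem (connectedness) shows the fibres $X_y$ are connected, so $H^0(X_y,\QQ)=\QQ$; the induced map on stalks $\calF_y\to(f_*f^*\calF)_y$ is then the canonical isomorphism onto $H^0(X_y,\QQ)\otimes_\QQ\calF_y=\QQ\otimes_\QQ\calF_y$, and, being compatible with restriction, this identifies $\calF$ with $f_*f^*\calF$.

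For part (ii) the same computation reduces the claim to $R^1 f_*\QQ_X=0$, i.e. $H^1(X_y,\QQ)=0$ for every $y\in Y$. For $y$ outside a closed subset $Z\subset Y$ of codimension $\geq 2$ the fibre $X_y$ is a single reduced point: the exceptional locus of $f$ is a divisor in $X$ (purity, as $X$ is smooth and $Y$ regular), and it cannot be contracted onto a divisor in $Y$, since otherwise $f$ would be quasi-finite near the generic point of that divisor, hence an isomorphism there by Zariski's Main Theorem, contradicting that it lies in the exceptional locus. So the only fibres in question are positive-dimensional, and the crux is to show that they carry no first rational cohomology. The cleanest way is to invoke that the fibres of a birational morphism onto a smooth variety (more generally, onto a variety with rational singularities) are rationally chain connected, hence simply connected, and therefore satisfy $H^1(X_y,\QQ)=0$. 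If one prefers to stay within the paper's machinery, one can instead apply the decomposition theorem of \S\ref{s.BBDG} to $Rf_*\QQ_X[\dim X]$, which is pure of weight $\dim X$ and has $\QQ_Y[\dim Y]$ as a direct summand (as $f$ is generically an isomorphism and $\dim X=\dim Y$); a weight-and-support count on the remaining intersection-complex summands — all supported in codimension $\geq 2$, with the weight of the underlying local system pinned to the codimension while that local system, being a subquotient of fibrewise cohomology, has weight at most its degree — shows that nothing survives in cohomological degree $\leq 1$, the one-dimensional fibres over codimension-two strata being disposed of by passing to a general surface section of $Y$ and using the classical fact that fibres of a birational morphism between smooth surfaces are trees of rational curves. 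Either way, once the fibrewise vanishing of $H^1$ is in hand, $R^1 f_*\QQ_X=0$ and (ii) follows from the stalk formula.

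\textbf{Main obstacle.} Everything except the vanishing $H^1(X_y,\QQ)=0$ is formal; that vanishing — equivalently, rational (chain) connectedness of the fibres of a birational morphism with smooth target — is the one place where real geometry, or an appeal to a substantial known result, is needed.
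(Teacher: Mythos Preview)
Your reduction matches the paper exactly: check on stalks via proper base change, deduce (i) from connectedness of fibres (Zariski's Main Theorem), and reduce (ii) to $H^1(X_y,\QQ)=0$. The paper's proof then simply asserts that the fibres of such an $f$ are simply connected, with no further justification.

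Where you go beyond the paper is in trying to supply that justification, and here there is a wrinkle. The implication ``rationally chain connected $\Rightarrow$ simply connected'' is not valid for singular projective varieties; the standard theorem (Koll\'ar) needs smoothness, and the fibres $X_y$ are typically singular. Invoking Hacon--McKernan is in any case much heavier than what is needed. Your decomposition-theorem alternative is also not self-contained as sketched: the weight/support bookkeeping constrains the summands but does not by itself exclude every contribution to $R^1f_*\QQ$, which is why you are forced back to the surface case at the end.

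There is an entirely elementary argument for $\pi_1(X_y)=1$ that uses only the codimension observation you already made. Choose a small contractible ball $B\subset Y$ around $y$; by properness, $f^{-1}(B)$ deformation retracts onto $X_y$. Since the non-isomorphism locus $Z\subset Y$ has codimension $\ge 2$, the set $B\setminus Z$ is simply connected, and $f$ identifies $f^{-1}(B\setminus Z)$ with $B\setminus Z$. The complement of $f^{-1}(B\setminus Z)$ in $f^{-1}(B)$ is the exceptional divisor, a closed analytic subset of complex codimension one in the \emph{smooth} manifold $f^{-1}(B)$; hence the inclusion $f^{-1}(B\setminus Z)\hookrightarrow f^{-1}(B)$ surjects on $\pi_1$. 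Thus $\pi_1(X_y)=\pi_1(f^{-1}(B))$ is a quotient of the trivial group.
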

\begin{proof}
  It suffices to check both statements on the stalks. By using proper
  base change, we see that the first statement follows from Zariski's
  main theorem.  Similarly, the second statement follows from the fact that the
  fibers of a projective birational morphism bet wen separated schemes
  of finite type over $\CC$ are simply connected.
\end{proof}

\begin{theorem}
\label{t.SingsPersist}
  Let $h:\calX\to P$ be as in Lemma~\ref{l.ODPBase} and let $f:S\to P$
  be a projective birational morphism.  Let $\calH$ and $U$ be as in
  Lemma~\ref{l.ODPBase} and suppose that $\nu\in\NF(U,\calH)^{\ad}_P$.
  Then $\nu$ has a non-torsion singularity on $P$ if and only if
  $f^*\nu$ has a non-torsion singularity on $S$.
\end{theorem}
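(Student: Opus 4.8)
The plan is to combine Proposition~\ref{p.Leray} with a base-change computation in which the ODP hypothesis enters in an essential way. One implication is already available: since $f$ is birational it is in particular dominant, so Proposition~\ref{p.Leray} shows at once that if $f^*\nu$ is singular on $S$ then $\nu$ is singular on $P$. It remains to prove the converse — if $\nu$ is singular on $P$ then $f^*\nu$ is singular on $S$ — which is the instance of Conjecture~\ref{c.blow} that Lemma~\ref{l.ODPBase} is designed to make accessible.

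Keep the notation of Lemma~\ref{l.ODPBase}: let $j\colon U\hookrightarrow P$ be the dense open over which $h$ is smooth and $\calH$ is defined, set $V:=f^{-1}U$ (which is dense in $S$, since $f$ is birational and $S$ is irreducible), and form the cartesian square with $j_S\colon V\hookrightarrow S$ and $g:=f|_V\colon V\to U$. As a base change of $f$, the morphism $g$ is again projective and birational. Exactly as in the proof of Proposition~\ref{p.Leray}, the Leray spectral sequence for $Rj_*\calH$ gives an exact sequence
$$0\to \rH^1(P,R^0j_*\calH)\to \rH^1(U,\calH)\stackrel{s_j}{\to}\rH^0(P,R^1j_*\calH),$$
and $\nu$ is singular on $P$ precisely when $s_j(\cl\nu)\neq 0$; likewise $f^*\nu$ is singular on $S$ precisely when $s_{j_S}(\cl(f^*\nu))\neq 0$, where $\cl(f^*\nu)$ is the image of $\cl\nu$ under restriction $\rH^1(U,\calH)\to\rH^1(V,g^*\calH)$. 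By functoriality of the Leray spectral sequence for the cartesian square, the restriction map $\rH^1(U,\calH)\to\rH^1(V,g^*\calH)$ (which carries $\cl\nu$ to $\cl(f^*\nu)$) fits into a commutative square with $s_j$ on top, $s_{j_S}$ on the bottom, and on the right the map $\rH^0(P,R^1j_*\calH)\to\rH^0(S,R^1j_{S*}g^*\calH)$ induced by the base-change morphism of constructible sheaves $R^1j_*\calH\to f_*R^1j_{S*}g^*\calH$. Hence the theorem is reduced to the assertion that this base-change morphism of sheaves is injective.

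To prove that injectivity I would compute $R^1$ of the direct image along $f\circ j_S=j\circ g$ in two ways. First, $R(fj_S)_*g^*\calH=Rj_*\bigl(Rg_*g^*\calH\bigr)$; since $g$ is projective birational, Lemma~\ref{l.ProperBirational} yields $g_*g^*\calH=\calH$ and $R^1g_*g^*\calH=0$, so $Rg_*g^*\calH$ coincides with $\calH$ in cohomological degrees $\le 1$, whence $R^1(fj_S)_*g^*\calH\cong R^1j_*\calH$. Second, the Leray spectral sequence $R^pf_*R^qj_{S*}g^*\calH\Rightarrow R^{p+q}(fj_S)_*g^*\calH$ has five-term exact sequence beginning
$$0\to R^1f_*\bigl(j_{S*}g^*\calH\bigr)\to R^1(fj_S)_*g^*\calH\to f_*R^1j_{S*}g^*\calH;$$
but by Lemma~\ref{l.ODPBase} the degree-zero base-change map is an \emph{isomorphism} $j_{S*}g^*\calH\cong f^*j_*\calH$, so $R^1f_*(j_{S*}g^*\calH)=R^1f_*(f^*j_*\calH)=0$ by Lemma~\ref{l.ProperBirational}. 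Composing the two computations gives $R^1j_*\calH\cong R^1(fj_S)_*g^*\calH\hookrightarrow f_*R^1j_{S*}g^*\calH$, and this composite is exactly the base-change morphism; applying the left-exact functor $\rH^0(P,-)$ then completes the proof.

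I expect the only real difficulty to be a point of bookkeeping: one must check that the monomorphism $R^1j_*\calH\hookrightarrow f_*R^1j_{S*}g^*\calH$ produced by the two spectral-sequence computations literally agrees with the base-change map occurring in the functoriality of the Leray spectral sequence, so that the implication ``$s_j(\cl\nu)\neq 0\Rightarrow s_{j_S}(\cl(f^*\nu))\neq 0$'' is justified. This is a standard compatibility of base change with Grothendieck spectral sequences; I would record it by observing that every arrow in the argument is assembled from the adjunction units of $g^*\dashv g_*$, $j_S^*\dashv j_{S*}$ and $f^*\dashv f_*$ and is therefore identified by the usual calculus of base-change $2$-morphisms. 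It is worth emphasizing that the ODP hypothesis is used only through Lemma~\ref{l.ODPBase} — precisely, through the fact that the degree-zero base-change map is an isomorphism and not merely injective as in Lemma~\ref{l.base-change} — and this is exactly the place where a proof of Conjecture~\ref{c.blow} in general would require a new idea.
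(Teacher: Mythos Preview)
Your proof is correct and rests on the same two lemmas as the paper (Lemma~\ref{l.ODPBase} and Lemma~\ref{l.ProperBirational}), but the execution differs in an instructive way. The paper begins by shrinking $U$ so that $g=f|_{f^{-1}U}$ becomes an \emph{isomorphism}; then in the ladder of five-term Leray sequences the middle vertical arrow $\rH^1(U,\calH)\to\rH^1(V,g^*\calH)$ is the identity, and the argument finishes by showing that the \emph{left} vertical arrow $\rH^1(P,j_*\calH)\to\rH^1(S,j_{S*}\calH)$ is an isomorphism, using only the Leray sequence for $f$. You instead leave $U$ untouched, work with the \emph{right} vertical arrow, and prove the stronger sheaf-level statement that $R^1 j_*\calH\hookrightarrow f_*R^1 j_{S*}g^*\calH$ by computing $R^1(jg)_*g^*\calH=R^1(fj_S)_*g^*\calH$ via both Grothendieck spectral sequences; this costs you a second application of Lemma~\ref{l.ProperBirational} (to $g$ as well as to $f$) and the compatibility check you flag at the end. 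The paper's route is a bit shorter; yours yields a local statement rather than a global one and sidesteps the (easy but implicit) verification that shrinking $U$ does not affect whether $\nu$ is singular on $P$.
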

\begin{proof}
  The ``if'' part follows from Proposition~\ref{p.Leray}.  To prove
  the ``only if'' direction, we can assume without loss of generality
  that $f:f^{-1}U\to U$ is an isomorphism.  In other words, we may replace the diagram~\eqref{e.LerayPB} in the proof of Proposition~\ref{p.Leray} with the following diagram
  \begin{equation}
    \label{e.Leray2}
\xymatrix{ 
    &    S\ar[d]^f\\
U\ar[ur]^{j_S}\ar[r]_{j}  & P.
}
\end{equation}
By the functoriality of the  sequence~\eqref{e.Leray},
we have a diagram 
\begin{equation}
  \label{e.Leray3}
\xymatrix{
0\ar[r]  & \rH^1(P,R^0 j_*\hh)\ar[d] \ar[r] & 
\rH^1(U,\hh)\ar[r]\ar[d]^{\cong} & \rH^0(P,R^1j_*\hh)\ar[d]\\
0\ar[r]  & \rH^1(S,R^0 j_{S*}\hh) \ar[r] & 
\rH^1(U,\hh)\ar[r] & \rH^0(S,R^1j_{S*}\hh).
}
\end{equation}
It suffices then to show that the map $\rH^1(P,R^0j_*\hh)\to
\rH^1(S,R^0j_{S*}\hh)$ is an isomorphism.  For this, we apply the Leray
spectral sequence coming from the map $f:S\to P$.   We have an exact sequence
\begin{equation}
  \label{e.Lerayf}
  0\to \rH^1(P,j_*\hh) \to \rH^1(S,j_{S*}\hh)\to \rH^0(P,R^1f_* (j_{S*}\hh)).
\end{equation}
By Lemma~\ref{l.ODPBase}, $j_{S*}\hh=f^*j_*\hh$.   Therefore, by Lemma~\ref{l.ProperBirational}, it follows that 
\begin{align*}
  R^1f_*(j_{S*}\hh) &= R^1f_*f^*j_*\hh\\
&=0.
\end{align*}
From the exactness of~\eqref{e.Lerayf}, it follows that the map $\rH^1(P,j_*\hh)\to\rH^1(S,j_{S*}\hh)$ is an isomorphism.
\end{proof}

\begin{corollary}
  Conjectures~\ref{c.GG} and~\ref{c.Main} are equivalent.
\end{corollary}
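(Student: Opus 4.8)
We have already shown that Conjecture~\ref{c.GG} implies Conjecture~\ref{c.Main} (the corollary following Proposition~\ref{p.Leray}), and Theorem~\ref{t.Main} identifies Conjecture~\ref{c.Main} with the Hodge conjecture; so the only remaining implication is that the Hodge conjecture implies Conjecture~\ref{c.GG}, and the plan is to deduce it from R.~Thomas's theorem on nodal hypersurfaces together with Theorem~\ref{t.SingsPersist}. Given a non-torsion primitive Hodge class $\zeta\in\rH^{2n}(X,\QQ(n))$ and a Deligne cohomology class $\omega$ lifting it, I would first invoke~\cite{thomas}: since the Hodge conjecture holds, there is an integer $k$ and a divisor $D\in|\calL^k|$ which is smooth away from finitely many ordinary double points (ODPs) and satisfies $\zeta|_D\neq 0$. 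Enlarging $k$, we may also assume by Proposition~\ref{p.VanNonZero} that the vanishing cycles of Lefschetz pencils in $P:=|\calL^k|$ are non-trivial; note that $\zeta$ remains primitive with respect to $\calL^k$. Let $\pi:\xx\to P$ be the incidence variety and $\nu:=\nu(\omega,\calL^k)$ the associated normal function on $P\setminus X^{\vee}$.

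Next I would pass to a \emph{general} pencil $\Lambda\subset P$ through the point $[D]$. Because $D$ has at worst ODP singularities and $\Lambda$ is general, the restricted family $h:=\pi_\Lambda:\xx_\Lambda\to\Lambda$ has smooth total space $\xx_\Lambda$ (the blow-up of $X$ along the smooth, codimension-two base locus of $\Lambda$), is proper, flat, of odd relative dimension $2n-1$, is smooth over the dense open $U_\Lambda:=\Lambda\setminus X^{\vee}$, and has only ODP fibres (a single node over each point of $\Lambda\cap X^{\vee}$ other than $[D]$, the nodes of $D$ over $[D]$). Thus $h$ satisfies the hypotheses of Lemma~\ref{l.ODPBase} and Theorem~\ref{t.SingsPersist} over the base $\Lambda$, with underlying variation $\calH:=R^{2n-1}(\pi^{\sm}_\Lambda)_*\QQ(n)$, and the restriction $\nu_\Lambda$ of $\nu$ to $U_\Lambda$ is an admissible normal function on $U_\Lambda$ with underlying variation $\calH$ (admissibility is preserved by pull-back along $\Lambda\hookrightarrow P$). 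Since the members of $\Lambda$ are again hyperplane sections of $X$ and the universal hypersurface over $\Lambda$ is an ample divisor in $X\times\Lambda$, the perverse weak Lefschetz theorem (Theorem~\ref{t.wl}) and the computation of Corollary~\ref{c.VanNonZero} apply to $\pi_\Lambda$ mutatis mutandis, and give $\sigma_{[D]}(\nu_\Lambda)=\zeta|_D\neq 0$ in $\rH^{2n}(\xx_{[D]},\QQ(n))$. Hence $\nu_\Lambda$ is singular on $\Lambda$ (with non-torsion singularity at $[D]$, since $\zeta$ is a non-torsion integral class).

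Finally, let $f:S\to P$ be any resolution of the discriminant locus $X^{\vee}$, and let $\tilde\Lambda\subset S$ be the strict transform of $\Lambda$, so that $f_\Lambda:\tilde\Lambda\to\Lambda$ is a projective birational morphism of smooth varieties. Applying Theorem~\ref{t.SingsPersist} to the ODP family $h$ and the morphism $f_\Lambda$, the singularity of $\nu_\Lambda$ on $\Lambda$ forces $f_\Lambda^{*}\nu_\Lambda$ to be singular on $\tilde\Lambda$; but $f_\Lambda^{*}\nu_\Lambda=(f^{*}\nu)|_{\tilde\Lambda}$, and the singularity of a normal function at a point maps compatibly under restriction of the base to a closed subvariety, so $f^{*}\nu$ is singular on $S$. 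Since the construction depends only on $\zeta$ and not on the chosen $\omega$, this gives Conjecture~\ref{c.GG}, and therefore Conjectures~\ref{c.GG} and~\ref{c.Main} are equivalent.

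The step I expect to be the real obstacle is the second paragraph: extracting from the Hodge conjecture a \emph{one-parameter family of hypersurfaces with at worst ODP fibres} on which the singularity of the normal function is non-zero. This is exactly where Thomas's nodal hypersurface theorem becomes essential — without it one would need Conjecture~\ref{c.blow} in full generality — and it is also where one must check that the singularity calculation of Corollary~\ref{c.VanNonZero}, together with the underlying perverse weak Lefschetz package, survives the passage from $|\calL^k|$ to the pencil $\Lambda$. Everything after that is formal: functoriality of singularities under restriction of the base, plus the already-established Theorem~\ref{t.SingsPersist}.
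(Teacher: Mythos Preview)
Your proposal has a genuine gap in the second paragraph, precisely at the step you flagged as the real obstacle.  Restricting to a pencil $\Lambda\cong\PP^1$ \emph{kills} the singularity: on a smooth curve one has $j_{!*}(L[1])=(j_*L)[1]$ for any local system $L$, so the stalk of $j_{!*}L[1]$ at $[D]$ is $L^{T}$ concentrated in degree $-1$, and hence $\IH^1_{[D]}(\hh_{2n-1}|_\Lambda)=0$.  By Theorem~\ref{t.InIH} the singularity $\sigma_{[D]}(\nu_\Lambda)$ factors through this group, so $\sigma_{[D]}(\nu_\Lambda)=0$ regardless of $\zeta|_D$.  In terms of the decomposition of Corollary~\ref{c.H2n} for $\pi_\Lambda$, the term $\rH^{-d+1}_{[D]}(E_{00})=\IH^1_{[D]}(\hh_{2n-1}|_\Lambda)$ vanishes, and the class $\zeta|_D\neq 0$ is forced into $E_{01}(\pi_\Lambda)$, which is therefore nonzero at the multi-nodal point $[D]$.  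Thus Corollary~\ref{c.VanNonZero} does \emph{not} go through ``mutatis mutandis'' for the pencil, and (since $\tilde\Lambda\to\Lambda$ is an isomorphism of smooth curves) your invocation of Theorem~\ref{t.SingsPersist} is vacuous anyway.

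The paper's fix is to restrict not to a curve but to the \emph{open} locus $P'\subset P=|\calL^k|$ of hypersurfaces with at worst ODP singularities.  This $P'$ contains the Thomas point $[s]$, has the same dimension as $P$, and the incidence family $\xx|_{P'}\to P'$ satisfies the hypotheses of Lemma~\ref{l.ODPBase}.  Because $P'$ is open in $P$, the local groups $\IH^1_{[s]}$ are unchanged, so Corollary~\ref{c.VanNonZero} gives $\sigma_{[s]}(\nu)=\zeta|_{\xx_{[s]}}\neq 0$ on $P'$.  Now $f^{-1}(P')\to P'$ is projective birational between smooth varieties, and Theorem~\ref{t.SingsPersist} applied over $P'$ yields a non-torsion singularity of $f^*\nu$ on $f^{-1}(P')\subset S$.
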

\begin{proof}
  We have already shown that Conjecture~\ref{c.GG} implies
  Conjecture~\ref{c.Main}.  To prove the converse, we are going to 
use the result of Thomas alluded to in the introduction.  

Let $X\subset\PP^n$ be a projective complex variety of dimension $2n$
with $n$ an integer and let $\zeta$ denote a primitive Hodge class on $X$.

Since Conjecture~\ref{c.Main} holds, the Hodge conjecture also holds.
Therefore, $\zeta$ is algebraic.  By Thomas' result, it follows that, for $k\gg 0$, we can find a hyperplane section $s\in\rH^0(X,\calO_X(k))$ such that 
\begin{enumerate}
\item $\zeta_{|V(s)}$ is non-zero in $\rH^*(V(s),\QQ)$;  
\item $V(s)$ has only ODP singularities.
\end{enumerate}

By choosing $k\gg 0$, we can assume that the vanishing cycles of
Lefschetz pencils in $|\calO_X(k)|$ are non-trivial.  Then set
$\calL=\calO_X(k)$ and let $P,\calX$ and $\pi$ be the incidence scheme
in~\eqref{e.ScriptX}.

Let $\omega$ denote a lift of $\pr^*\zeta$ to the Deligne cohomology
of $\calX$ and $\nu=\nu(\omega,\calL)$. By
Corollary~\ref{c.VanNonZero}, we see that $\nu$ has a non-torsion
singularity at a the point $[s]\in P$.  Now suppose $f:S\to P$ is any
proper birational morphism.  By restricting the locus in $P$ of
hyperplane sections intersecting $X$ with only ODP singularities, we
see from that $f^*\nu$ has a non-torsion singularity on $S$ as well.
\end{proof}



\begin{bibdiv}
\begin{biblist}

\bib{BBD}{article}{
   author={Be{\u\i}linson, A. A.},
   author={Bernstein, J.},
   author={Deligne, P.},
   title={Faisceaux pervers},
   language={French},
   conference={
      title={Analysis and topology on singular spaces, I},
      address={Luminy},
      date={1981},
   },
   book={
      series={Ast\'erisque},
      volume={100},
      publisher={Soc. Math. France},
      place={Paris},
   },
   date={1982},
   pages={5--171},
}

\bib{DeligneDegeneration}{article}{
   author={Deligne, P.},
   title={Th\'eor\`eme de Lefschetz et crit\`eres de d\'eg\'en\'erescence de
   suites spectrales},
   language={French},
   journal={Inst. Hautes \'Etudes Sci. Publ. Math.},
   number={35},
   date={1968},
   pages={259--278},
   issn={0073-8301},
}

\bib{GG}{article}{
   author={Green, Mark},
   author={Griffiths, Phillip},
   TITLE = {Algebraic cycles and singularities of normal functions},

   conference={
      title={Algebraic Cycles and Motives},
      address={Grenoble},
      date={2007},
   },
   book={
      series={London Mathematical Society Lecture Note Series},
      volume={343},
      publisher={Cambridge University Press},
      place={Cambridge, UK},
   },
   date={2007},
   pages={206--263},
}

\bib{HodgeII}{article}{
   author={Deligne, Pierre},
   title={Th\'eorie de Hodge. II},
   language={French},
   journal={Inst. Hautes \'Etudes Sci. Publ. Math.},
   number={40},
   date={1971},
   pages={5--57},
   issn={0073-8301},
}

\bib{Hartshorne}{book}{
   author={Hartshorne, Robin},
   title={Algebraic geometry},
   note={Graduate Texts in Mathematics, No. 52},
   publisher={Springer-Verlag},
   place={New York},
   date={1977},
   pages={xvi+496},
   isbn={0-387-90244-9},
}

\bib{Kashiwara}{article}{
   author={Kashiwara, Masaki},
   title={A study of variation of mixed Hodge structure},
   journal={Publ. Res. Inst. Math. Sci.},
   volume={22},
   date={1986},
   number={5},
   pages={991--1024},
   issn={0034-5318},
}

\bib{Katz96}{book}{
   author={Katz, Nicholas M.},
   title={Rigid local systems},
   series={Annals of Mathematics Studies},
   volume={139},
   publisher={Princeton University Press},
   place={Princeton, NJ},
   date={1996},
   pages={viii+223},
   isbn={0-691-01118-4},
}

\bib{SaitoMHP}{article}{
   author={Saito, Morihiko},
   title={Modules de Hodge polarisables},
   language={French},
   journal={Publ. Res. Inst. Math. Sci.},
   volume={24},
   date={1988},
   number={6},
   pages={849--995 (1989)},
   issn={0034-5318},
}

\bib{SaitoIntro}{article}{
   author={Saito, Morihiko},
   title={Introduction to mixed Hodge modules},
   note={Actes du Colloque de Th\'eorie de Hodge (Luminy, 1987)},
   journal={Ast\'erisque},
   number={179-180},
   date={1989},
   pages={10, 145--162},
   issn={0303-1179},
}

\bib{SaitoANF}{article}{
   author={Saito, Morihiko},
   title={Admissible normal functions},
   journal={J. Algebraic Geom.},
   volume={5},
   date={1996},
   number={2},
   pages={235--276},
   issn={1056-3911},
}

\bib{Saito90}{article}{
   author={Saito, Morihiko},
   title={Mixed Hodge modules},
   journal={Publ. Res. Inst. Math. Sci.},
   volume={26},
   date={1990},
   number={2},
   pages={221--333},
   issn={0034-5318},
}

\bib{SaitoHC1}{article}{
   author={Saito, Morihiko},
   title={Hodge conjecture and mixed motives. I},
   conference={
      title={Complex geometry and Lie theory},
      address={Sundance, UT},
      date={1989},
   },
   book={
      series={Proc. Sympos. Pure Math.},
      volume={53},
      publisher={Amer. Math. Soc.},
      place={Providence, RI},
   },
   date={1991},
   pages={283--303},
}

\bib{SGA72}{book}{
   label={SGA7},
   title={Groupes de monodromie en g\'eom\'etrie alg\'ebrique. II},
   language={French},
   note={S\'eminaire de G\'eom\'etrie Alg\'ebrique du Bois-Marie 1967--1969
   (SGA 7 II);
   Dirig\'e par P. Deligne et N. Katz;
   Lecture Notes in Mathematics, Vol. 340},
   publisher={Springer-Verlag},
   place={Berlin},
   date={1973},
   pages={x+438},
}

\bib{thomas}{article}{
   author={Thomas, R. P.},
   title={Nodes and the Hodge conjecture},
   journal={J. Algebraic Geom.},
   volume={14},
   date={2005},
   number={1},
   pages={177--185},
   issn={1056-3911},
 }

\end{biblist}
\end{bibdiv}

\end{document}